\documentclass[12pt]{article}
\usepackage{graphicx,amsthm} 
\usepackage{tikz, makecell, colortbl, tabularray, multirow, hhline,hyperref}
\definecolor{army}{rgb}{0.0, 0.5, 0.0}
\definecolor{chestnut}{rgb}{0.8, 0.36, 0.36}
\definecolor{awesome}{rgb}{1.0, 0.13, 0.32}
\definecolor{electricindigo}{rgb}{0.44, 0.0, 1.0}
\definecolor{teal}{rgb}{0.0, 0.5, 0.5}
\definecolor{burgundy}{rgb}{0.5, 0.0, 0.13}
\definecolor{islamicgreen}{rgb}{0.0, 0.56, 0.0}
\newtheorem{condition}{Condition}
\usepackage[margin=1in]{geometry}
\usepackage[numbers]{natbib}

\usepackage{amsmath,amssymb}
\usepackage{tikz,bookmark,epsfig,rotating,colortbl}
\usepackage{multirow, anyfontsize}

\usepackage{amsthm}%
\usepackage{mathrsfs}%
\usepackage[title]{appendix}%
\usepackage{xcolor}%
\usepackage{textcomp}%
\usepackage{manyfoot}%
\usepackage{booktabs}%

\usepackage{listings,bm}
\usepackage{comment}
\usepackage{float}
\usepackage{hhline}
\usepackage{authblk}
\newtheorem{lemma}{Lemma}
\newtheorem{proposition}{Proposition}
\newtheorem{theorem}{Theorem}
\newtheorem{corollary}{Corollary}
\title{\textbf{Strong Colouring of the Qualitative Independence Hypergraph $3\text{-}QI(11,2)$}}
\author[1]{Raina Mary Thomas}
\author[2]{Yasmeen Akhtar}

\affil[1,2]{ Department of Mathematics, BITS Pilani K K Birla Goa Campus, Goa 403726, India}

\affil[1]{\texttt{p20220019@goa.bits-pilani.ac.in}}
\affil[2]{\texttt{yasmeena@goa.bits-pilani.ac.in}}

\date{}
\begin{document}

\maketitle

\begin{abstract}
We determine the strong independence number of the qualitative independence hypergraph, $3\text{-}QI(11, 2)$, using a technique that involves considering its vertices as subsets of $\{1,2, \ldots, 11\}$ and assessing them as intersecting set systems. This gives the maximum size of colour classes in any strong colouring and thus, a lower bound on the strong chromatic number of $3\text{-}QI(11,2)$. We leverage this bound along with an upper bound of the strong chromatic number of $3\text{-}QI(10,2)$, to consequently, establish that the covering array number of $3\text{-}QI(11, 2)$, $CAN(3\text{-}QI(11,2),2) = 11$ and give a sufficient condition for a hypergraph $H$ to have $CAN(H, 2)=11$.
\end{abstract}
\noindent\textbf{Keywords}: Qualitative Independence, Hypergraph, Strong Colouring, Strong Independence, Covering Array Number\\               
\noindent\textbf{MSC codes:} 05C15, 05C35, 05C65, 90C27, 05D05                       

\section{Introduction }
Hypergraphs generalize graphs by allowing edges to connect more than two vertices, thereby modeling higher-order relationships that cannot be captured by pairwise interactions alone. A hypergraph is a pair \(H=(V,E)\), where \(V=\{v_1,\ldots,v_k\}\) is the vertex set and \(E=\{e_1,\ldots,e_m\}\) is a family of nonempty subsets of \(V\), called \emph{hyperedges}, with \(\bigcup_{i=1}^{m} e_i = V\). A hypergraph is \emph{\(t\)-uniform} if every hyperedge has exactly \(t\) vertices. The special case \(t=2\) corresponds to a (simple) graph. Figure~\ref{fig:eg} shows a hypergraph with \(V=\{a,b,c,d,e\}\) and \(E=\big\{\{a,b,c\},\{c,d,e\},\{b,e\},\{a,d\},\{b,d\}\big\}\). 
For a positive integer $k$, a \emph{strong \(k\)-colouring} of a hypergraph $H$ is a partition of its vertex set into $k$ colour classes such that no two vertices belonging to the same hyperedge share the same colour. The \emph{strong chromatic number}, \(\chi_S(H)\), is the minimum \(k\) for which \(H\) admits a strong \(k\)-colouring. A set \(U\subseteq V\) is \emph{strongly independent} if no hyperedge in $H$ contains more than one vertex of \(U\). The maximum cardinality of such a set is the \emph{strong independence number}, \(\alpha_S(H)\). In graphs, these notions reduce to the classical chromatic and independence numbers.\\

Let \(n,g\) be positive integers, and let \(S\) be a set of size \(g\). A \emph{covering array on a hypergraph} \(H=(V,E)\) of size $n$ with $g$ symbols, denoted \(CA(n,H,g)\), is an \(n\times |V|\) array with entries from \(S\) and whose columns are indexed by vertices in \(V\) such that for every hyperedge \(e\in E\), the $n\times |e|$ sub-array corresponding to the vertices in $e$ contains every ordered $|e|$-tuple from $S^{|e|}$ at least once as a row. The minimum such \(n\) is the \emph{covering array number} \(CAN(H,g)\); a covering array achieving this value is called \emph{optimal}. Figure \ref{fig:eg} shows an optimal CA on $H$ of size $8$ with $2$ symbols.\\

It is evident that $CAN(H, g)\geq g^r$, where $r$ is the \emph{rank} of $H$ given by $r=\max \{|e|: e\in E\}$. However, determining \(CAN(H,g)\) is NP-complete, as shown by Seroussi and Bshouty~\cite{NPproof} via a reduction from the \(3\)-colouring problem in graphs. 
When the hypergraph $H$ models system components and their interactions, a covering array (CA) on $H$ of size $n$ yields structured test suites for detecting interaction faults using $n$ test cases, see \cite[Figures~2-4]{akhtar2017mixed}. For further details on applications of covering arrays in combinatorial testing, refer to \cite{appl1994compressing, applstevens1998efficient, drug, alanwilliams1996practical}.

\begin{figure}[h]
    \centering
    \begin{tikzpicture}[scale=3]
   \draw[rotate around={30:(0, 0.4)}, burgundy, fill=burgundy!40, fill opacity=0.4] (0, 0.4) ellipse (0.15 cm and 0.55 cm);
				\draw[rotate around={150:(0.4, 0.4)}, blue, fill=cyan!40, fill opacity=0.4] (0.4, 0.4) ellipse (0.15 cm and 0.55 cm);
                
				\draw[black, thick] (0.4, 0.4)--(0, 0.4);
    \draw[black, thick] (0.6, 0.7)--(-0.2, 0.7);
    \draw[black, thick] (0.4, 0.4)--(-0.2, 0.7);
				
				\node[draw, black, circle,thick,minimum size=0.1cm,inner sep=0pt, fill=black] at (0.2, 0.05) {};
                \node[draw, black, circle,thick,minimum size=0.1cm,inner sep=0pt, fill=black] at (0.4, 0.4) {};
                \node[draw, black, circle,thick,minimum size=0.1cm,inner sep=0pt, fill=black] at (0.6, 0.7) {};
                \node[draw, black, circle,thick,minimum size=0.1cm,inner sep=0pt, fill=black] at (0, 0.4) {};
                \node[draw, black, circle,thick,minimum size=0.1cm,inner sep=0pt, fill=black] at (-0.2, 0.7) {};
                \node at (-0.1, 0.4) {\large $a$};
                \node at (0.2, 0.12) {\large $c$};
                \node at (0.46, 0.4) {\large $d$};
                \node at (-0.25, 0.75) {\large $b$};
                \node at (0.65, 0.75) {\large $e$};

    \node at (2,0.25){
     \begin{tblr}{
					colspec = {|c|c|c|c|c|},
    row{1} = {burgundy!30!},
    stretch = 0.7
    }\hline
							$a$ & $b$ & $c$ & $d$ & $e$\\
							\hline
							$0$ & 0 & 0 & 1 & 0\\
							0 & 0 & 1 & 0 & 1\\
							0 & 1 & 0 & 0 & 1\\
							0 & 1 & 1 & 0 & 0\\
							1 & 0 & 0 & 0 & 0\\
							1 & 0 & 1 & 1 & 0\\
							1 & 1 & 0 & 1 & 1\\
							1 & 1 & 1 & 1 & 1\\
							\hline
					\end{tblr}};
        \end{tikzpicture}	            
    \caption{A hypergraph $H$ and an optimal $CA(8, H, 2)$.}
    \label{fig:eg}
\end{figure}

A class of hypergraphs known as \emph{qualitative independence (QI) hypergraphs}, which draw on the notion of qualitative independence from extremal set theory, provides a key characterization for determining when a covering array (CA) of a given size $n$ can exist on a hypergraph. Before defining these hypergraphs, we recall the notion of qualitatively independent partitions from \cite{renyi,kleitman1973families, poljak1983qualitatively, raina}. Let $R$ be an $n$-element set. Consider a family of partitions of $R$ into $g$ classes, $\mathcal{P} = \{R_1, R_2, \ldots, R_k\}$, where $R_j = (M^j_1, M^j_2, \ldots, M^j_g)$ for $j = 1, 2, \ldots, k$. The set $\mathcal{P}$ is said to be \emph{$t$-qualitatively independent} if, for every $t$ of the partitions $R_{i_1}, R_{i_2}, \ldots, R_{i_t}$, and for every choice of classes $M^{i_j}_s$ (with $1 \leq j \leq t, 1 \leq s \leq g$), $\displaystyle \bigcap_{j=1}^{t}M^{i_j}_s \neq \emptyset$, for all choices of $s$ and distinct choices of $i_j$. Each of these partitions can be represented in terms of vectors, by defining a vector $v$ corresponding to $R_j = (M^j_1, M^j_2, \ldots, M^j_g)$ by $v[i]=k$ if $i \in M^j_k, 1 \leq i \leq n, 1 \leq k \leq g$. So, equivalently, a set of vectors of length $n$ and entries from $\{1, 2, \ldots, g\}$ is said to be \emph{$t$-qualitatively independent} if for any $t$-subset $\{v_1, v_2, \ldots , v_t\}$ and for every $g^t$ of the $g$-ary $t$-tuples say $(g_1, g_2, \ldots, g_t)$, there exists an index $j$ such that $v_i[j] = g_i$ for $i=1, 2, \ldots, t$. The column vectors in a CA on a hypergraph, corresponding to the vertices in a hyperedge, are qualitatively independent; therefore, CAs are also known as \emph{qualitatively independent families}. QI hypergraphs defined in \cite{raina, raaphorst2013thesisvariable} encapsulate this property by incorporating the hypergraph structure. Let $n$, $g$, and $t$ be positive integers such that $t \geq 2$, $n \geq g^t$. A \emph{$t$-qualitative independence hypergraph}, $t\text{-}QI(n, g) = (V, E)$, is a $t$-uniform hypergraph, where $V$ is the set of all $g$-partitions of an $n$-set with the property that every class of the partition has size at least $g^{t-1}$ (or equivalently, set of all vectors of length $n$ with entries from $S$ and with the property that every alphabet appears at least $g^{t-1}$ times and the first appearance of every alphabet is in the lexicographic order). In this hypergraph, a set of $t$ vertices forms a hyperedge when the corresponding partitions are $t$-qualitatively independent. For example, the graph $2\text{-}QI(4,2)$ has vertices $v_1= \{\{1,2\}\{3,4\}\}, v_2 = \{\{1,3\}\{2,4\}\}, v_3= \{\{1,4\},\{2,3\}\}$ and has edges between each pair of these vertices, so it is equivalent to the complete graph on three vertices, $K_3$.\\

Adjacent vertices in $t\text{-}QI(n, 2)$ form a $(t-1)$-intersecting family. 
For integers $n\geq k \geq t \geq 1$, a \emph{$t$-intersecting family}, $ \mathcal {F}$, is a collection of subsets of $[n]= \{1,\dots ,n\}$ of size $k$ such that if $A,B\in \mathcal {F}$, then $|A\cap B|\geq t$. Frankl \cite{frankl1978erdos} constructed the $t$-intersecting families
\[\mathcal{F}_{\{n,k,t,r\}}=\{A\subseteq \{1,\dots ,n\}:|A|=k{\text{ and }}|A\cap \{1,\dots ,t+2r\}|\geq t+r\}.\]
The maximum size of $t$-intersecting families has been a question of interest in the field of extremal set theory and investigated in the literature \cite{extremalsettheory, Zakharov_2024}. Given below is an extension of the celebrated Erd$\mathrm{\ddot{o}}$s-Ko-Rado theorem \cite{erdos1961intersection}.\\

\begin{theorem}[Ahlswede-Khachatrian \cite{ahlswede1996complete}]
\label{AK ekr}
If $\mathcal {F}$ is $t$-intersecting family of $k$-subsets of $[n]$ and 
$(k-t+1)(2+{\tfrac {t-1}{r+1}})\leq n<(k-t+1)(2+{\tfrac {t-1}{r}})$, then
$ |\mathcal {F}|\leq \displaystyle\max _{r\colon t+2r\leq n}|\mathcal {F}_{\{n,k,t,r\}}|.$
\end{theorem}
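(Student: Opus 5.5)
The plan is to follow the original \emph{comparison/pushing--pulling} strategy of Ahlswede and Khachatrian, since Theorem~\ref{AK ekr} is a cited extremal result and no earlier statement in the paper makes it easier. First, reduce to \emph{left-compressed} (shifted) families. For $1\le i<j\le n$, the shift $S_{ij}$ replaces $A\in\mathcal F$ by $(A\setminus\{j\})\cup\{i\}$ whenever $j\in A$, $i\notin A$, and the image is not already in $\mathcal F$. I would verify the standard facts: $S_{ij}$ preserves $|\mathcal F|$, preserves uniformity $k$, and preserves the $t$-intersecting property (by a short case check on pairs $A,B$). Iterating until stable, we may assume $\mathcal F$ is maximal and shifted, hence an up-set within $\binom{[n]}{k}$ under the shifting order.

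Second, describe $\mathcal F$ through its \emph{generating family}. Let $\mathcal G(\mathcal F)$ be the minimal sets $G$ (of size at most $k$) such that $\mathcal F=\{A\in\binom{[n]}{k}: G\subseteq A \text{ for some } G\in\mathcal G\}$. For a shifted $\mathcal F$, $\mathcal G$ can be taken shifted and $t$-intersecting. Let $s=\max\{\max G: G\in\mathcal G\}$. The key lemma is this: if $s>t+2r$ for the relevant $r$, then one can replace the generators $G$ with $\max G=s$ and split them according to $|G|$. Dropping $s$ from one class (pulling) or adding a new smaller element (pushing) keeps the family $t$-intersecting. One of the two modifications does not decrease $|\mathcal F|$, and this depends on whether $n$ is above or below the relevant threshold $(k-t+1)(2+\tfrac{t-1}{r})$. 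This yields a new generating family with smaller $s$. Iterating shows the optimum is attained by a family generated inside $[t+2r]$. A further comparison then shows that among families generated by a $t$-intersecting family on $[t+2r]$, the largest is exactly $\mathcal F_{\{n,k,t,r\}}$, i.e.\ all sets meeting $[t+2r]$ in at least $t+r$ points.

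Third, finish with a direct computation. The ratio $|\mathcal F_{\{n,k,t,r+1\}}|/|\mathcal F_{\{n,k,t,r\}}|$ compares to $1$ according to whether $n$ is below or above $(k-t+1)(2+\tfrac{t-1}{r+1})$. So within the stated window for $n$, the maximum over $r$ is attained at the given $r$. This explains why the bound is $\max_r|\mathcal F_{\{n,k,t,r\}}|$.

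I expect the main obstacle to be the pushing--pulling inequality in the second step. One must count precisely how many $k$-sets are gained and lost when generators with maximum element $s$ are modified. The bookkeeping, which splits generators by $|G\cap[s-1]|$ and pairs them via the $t$-intersection constraint, is delicate. I would first try the case $t=1$ (recovering Erd\H{o}s--Ko--Rado, where only $r=0$ matters) to calibrate the counting. Then I would attempt the general case by the original weight-comparison argument, using the binomial identity $\binom{n-s}{k-|G|}$ versus $\binom{n-s}{k-|G|-1}$.
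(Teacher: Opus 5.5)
The paper gives no proof of this statement: it is quoted from Ahlswede--Khachatrian and used only in Lemma~\ref{14_6}. Your outline follows the architecture of their original argument. Your Step~3 is correct: $|\mathcal{F}_{\{n,k,t,r+1\}}|>|\mathcal{F}_{\{n,k,t,r\}}|$ iff $(t+r)(k-t-r)>(r+1)(n-k-r-1)$, i.e.\ iff $n<(k-t+1)(2+\frac{t-1}{r+1})$. But Step~2, which carries the whole theorem, has two genuine gaps.

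\emph{First gap: the move that lowers $s$.} You misdescribe it, and you never identify where the lower end of the window enters. Write $g_i^*$ for the generators with maximum $s$ and size $i$. The fact you need is that left-compression forces $|B\cap B'|\ge t+1$ for $B\in g_i^*$, $B'\in g_j^*$ whenever $i+j\neq s+t$. For $i+j>s+t$ this is counting. For $i+j<s+t$, there is a free element of $[s-1]$ to which $s$ can be shifted in $B'$, producing a member of the up-closure meeting $B$ in $t-1$ points. So the two alternatives are: delete $s$ from every member of $g_i^*$ and discard $g_{s+t-i}^*$, or the reverse. ``Adding a new smaller element'' does not lower $s$, since a left shift of a generator already lies in the family. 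The size changes are $|g_i^*|\binom{n-s}{k-i+1}-|g_{s+t-i}^*|\binom{n-s}{k-s-t+i}$ and its mirror image. Multiplying the two failure inequalities gives $n<2k-t+1$, so for $i\neq s+t-i$ one alternative never loses, with no dependence on $r$ at all. The threshold comes only from the self-paired class $i=(s+t)/2$, which exists only when $s\equiv t \pmod 2$. There neither alternative is available, because two members with union $[s]$ meet in exactly $t$ points. Instead, fix $l\in[s-1]$, shrink the members avoiding $l$, discard those containing $l$, and average over $l$. The total gain is positive iff $(s-i)(n-s-k+i)>(i-1)(k-i+1)$, which for $s=t+2r+2$ is exactly $n>(k-t+1)(2+\frac{t-1}{r+1})$.

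\emph{Second gap: the ``further comparison''.} This is where the upper end $n<(k-t+1)(2+\frac{t-1}{r})$ must be used, and it is not routine. Without that hypothesis it is false: $\mathcal{F}_{\{n,k,t,r-1\}}$ is generated inside $[t+2r-2]\subseteq[t+2r]$, and by your own Step~3 computation it is strictly larger than $\mathcal{F}_{\{n,k,t,r\}}$ once $n>(k-t+1)(2+\frac{t-1}{r})$. What is missing is the second half of the Ahlswede--Khachatrian argument: an optimal compressed family has no generator of size less than $t+r$.

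One way to supply it is duality. Complementation turns a $t$-intersecting family of $k$-sets into an $(n-2k+t)$-intersecting family of $(n-k)$-sets, with $\overline{\mathcal{F}_{\{n,k,t,r\}}}\cong\mathcal{F}_{\{n,n-k,n-2k+t,k-t-r\}}$. The quantity $k-t+1$ is unchanged, and $n$ lying strictly above the lower end of the dual window for $k-t-r$ is equivalent to $n<(k-t+1)(2+\frac{t-1}{r})$. Apply the pushing lemma to the complements, compressed in the reversed order. This shows that membership in an optimal family depends only on $A\setminus[t+2r]$. Combined with dependence only on $A\cap[t+2r]$, membership depends only on $|A\cap[t+2r]|$, and $t$-intersection plus maximality then force $\mathcal{F}_{\{n,k,t,r\}}$. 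The endpoint $n=(k-t+1)(2+\frac{t-1}{r+1})$ needs a little extra care. As written, your Step~2 only shows that an optimal family is generated inside $[t+2r]$, which does not yield the bound.
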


Raaphorst et al. \cite{raaphorst2018variable} showed that for a $t$-uniform hypergraph $H$ and positive integers $g$ and $n$, a covering array $CA(n, H, g)$ exists if and only if there is a hypergraph homomorphism from $H$ to $t\text{-}QI(n, g)$. They also proved that if there exists $H \rightarrow 3\text{-}QI(n, g)$, then $CAN(H, g) \leq CAN(t\text{-}QI(n, g), g)$. In a related result, Meagher and Stevens \cite{meagher2005covering} proved that $CAN(2\text{-}QI(n, 2), 2)=n$. Motivated by this result, we investigate the analogous question for 3-uniform qualitative independence hypergraphs: What is $CAN(3\text{-}QI(n, 2), 2)$? \\

Suppose $A$, $B$, and $C$ form a hyperedge in $3\text{-}QI(n, 2)$. Then, by definition of qualitative independence, each of $A\cap B\cap C$, $A\cap B \cap \overline{C}$, $A\cap \overline{B} \cap C$, $\overline{A}\cap \overline{B} \cap \overline{C}$, $\overline{A}\cap B \cap C$, $\overline{A}\cap \overline{B} \cap C$, $\overline{A}\cap B \cap \overline{C}$, $A\cap \overline{B} \cap \overline{C}$ must be non-empty. If $|X|<2$, where  $X$ is one of the intersections $A \cap B$, $A \cap \overline{B}$, $\overline{A} \cap B$, or $\overline{A} \cap \overline{B}$, then either $X \cap C = \emptyset$ or $X \cap \overline{C} = \emptyset$. Conversely, if $|X|\geq 2$, for any $X$, we can construct a vertex $C$ that forms a hyperedge with $A$ and $B$, by choosing one element from each intersection to include in $C$, and placing the remaining elements in $\overline{C}$. This gives rise to the following necessary and sufficient condition for adjacency in $3\text{-}QI(n, 2)$. \\

\begin{condition}\label{adjacencycondition}
A pair of distinct vertices $A$, $B$, cannot belong to a hyperedge in $3\text{-}QI(n, 2)$ if and only if at least one of the intersections $A \cap B$, $A \cap \overline{B}$, $\overline{A} \cap B$, $\overline{A} \cap \overline{B}$ has cardinality at most one.\end{condition}

\noindent The following lemma records a basic property of $(n-1)$-subsets of an $n$-element set, which will be used in the next section.\\

\begin{lemma} \label{condition}
For any $n \geq 4$, any two distinct $(n-1)$-subsets of an $n$-element set have exactly $(n-2)$ elements in common.
\end{lemma}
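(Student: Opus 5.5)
The plan is to describe each $(n-1)$-subset by the single element it omits, and then count the intersection directly with inclusion--exclusion. Let $X$ be the $n$-element set. Every $(n-1)$-subset of $X$ can be written uniquely as $X\setminus\{x\}$ for some $x\in X$, namely the one element it leaves out.

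Take two distinct $(n-1)$-subsets $A=X\setminus\{x\}$ and $B=X\setminus\{y\}$. They are distinct exactly when $x\neq y$. By De Morgan,
\[
A\cap B = X\setminus\{x,y\}.
\]
Since $x\neq y$, the removed set $\{x,y\}$ has exactly two elements, so $|A\cap B| = n-2$.

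As a cross-check, the same value follows from inclusion--exclusion. Because $A$ and $B$ are distinct sets of size $n-1$, their union is strictly larger than either, so $A\cup B=X$. Hence
\[
|A\cap B| = |A|+|B|-|A\cup B| = 2(n-1)-n = n-2.
\]

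The hypothesis $n\ge 4$ plays no role in the counting. It only makes $n-2\ge 2$, which is the range in which the paper will apply the lemma alongside Condition~\ref{adjacencycondition}. I expect no step to be a genuine obstacle. The only point needing care is justifying that $x\neq y$, which is just the distinctness of the two subsets.
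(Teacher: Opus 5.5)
Your proof is correct and complete. Writing each $(n-1)$-subset as $X\setminus\{x\}$ gives $A\cap B=X\setminus\{x,y\}$ with $x\neq y$, so $|A\cap B|=n-2$. The inclusion--exclusion cross-check is also valid.

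The paper records this lemma without any proof, treating it as an elementary fact, so there is no argument to compare against. You are also right that the hypothesis $n\geq 4$ plays no role in the count; the statement holds for every $n\geq 2$.
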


In this paper, we analyse the structure of $3\text{-}QI(11,2)$, whose vertices are the $2$-partitions of $[11]=\{1, 2, \ldots, 11\}$ with part sizes $(4,7)$ or $(5,6)$, represented by their smaller parts, that is, $4$- or $5$-subsets of $[11]$ (written in string form with $10$ denoted by $0$ and $11$ by $x$). Hence, $3\text{-}QI(11,2)$ has $\binom{11}{5}+\binom{11}{4}=792$ vertices. A hyperedge in $3\text{-}QI(11,2)$ consists of three vertices whose vector representations jointly realize all the $2^3=8$ possible $3$-tuples. Fixing the first coordinate to be $0$, the remaining tuples are distributed among the other $10$ coordinates. If exactly one coordinate contributes the tuple $000$, this can be done in $S(10,7)\cdot 7!$ ways; otherwise, all the $8$ tuples are distributed in $S(10,8)\cdot 8!$ ways. Dividing by $3!$ to account for permutations of vertices, the number of hyperedges is
\[
\frac{1}{3!}\left(S(10,8)\cdot 8! + S(10,7)\cdot 7!\right)=9{,}979{,}200,
\]
where $S(n,k)$ denotes the Stirling numbers of the second kind. We obtain a lower bound on the strong chromatic number of $3\text{-}QI(11,2)$ and determine its strong independence number via the maximum size of a colour class in a strong colouring. As a consequence, we show that $CAN(3\text{-}QI(11,2),2)=11$. Together with earlier results~\cite{raina}, this implies that $CAN(3\text{-}QI(n,2),2)=n$ for $n=8,9,10,11,12$. We conjecture that this equality holds for all $n>12$. \\

The remainder of the paper is organized as follows. Section \ref{sec3} explores bounds on the size of the colour classes in any strong colouring of $3\text{-}QI(11,2)$. These bounds lead to the strong independence number and a lower bound on the strong chromatic number of $3\text{-}QI(11,2)$ given in Section \ref{sec4}. We then establish that the covering array number, $CAN(3\text{-}QI(11,2),2)=11$. We also give a family of $3$-uniform hypergraphs $H$ such that $CAN(H,2)=11$ and then conclude with a discussion in Section \ref{sec5}. 

\section{Upper Bounds on the Size of Colour Classes}
\label{sec3}

A \emph{colour class} in a strong colouring is a set of vertices that are assigned the same colour. Thus, every colour class is a strongly independent set. In this section, we fix an arbitrary vertex in $3\text{-}QI(11,2)$ and look at the maximum size of a colour class containing it.\\

\begin{lemma}
  \label{14_1}
      Let $A=abcde$ be a vertex of $3\text{-}QI(11,2)=(V, E)$. Consider a family $\mathcal{F}=\{B\in V: |A \cap B|=0, |B|=5\}$. Then, $|\mathcal{F}|=6$ and together with $A$, they can form a colour class in $3\text{-}QI(11, 2).$
  \end{lemma}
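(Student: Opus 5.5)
Our plan has two parts: first count $\mathcal{F}$, then check that $\mathcal{F}\cup\{A\}$ is strongly independent. By Condition~\ref{adjacencycondition}, strong independence amounts to showing that every pair of distinct vertices in $\mathcal{F}\cup\{A\}$ has at least one of the four intersections $X\cap Y$, $X\cap\overline{Y}$, $\overline{X}\cap Y$, $\overline{X}\cap\overline{Y}$ of size at most one. Once that holds, no hyperedge contains two of these vertices, so they can all receive the same colour.

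\textbf{Counting.} A vertex $B$ with $|B|=5$ and $A\cap B=\emptyset$ is a $5$-subset of $\overline{A}=[11]\setminus A$, which has exactly $6$ elements. Hence $|\mathcal{F}|=\binom{6}{5}=6$, and the members of $\mathcal{F}$ are exactly the $5$-subsets of the $6$-set $\overline{A}$. Each such $B$ has size $5$, so it is a valid vertex of $3\text{-}QI(11,2)$, as required.

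\textbf{Pairs $(A,B)$ with $B\in\mathcal{F}$.} Here $A\cap B=\emptyset$, so this intersection has cardinality $0\le 1$. By Condition~\ref{adjacencycondition}, $A$ and $B$ lie in no common hyperedge.

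\textbf{Pairs $(B,B')$ of distinct members of $\mathcal{F}$.} Both are $5$-subsets of the $6$-element set $\overline{A}$, i.e.\ $(n-1)$-subsets of an $n$-set with $n=6\ge 4$. By Lemma~\ref{condition}, $|B\cap B'|=4$. Therefore $|\overline{B}\cap B'|=|B'|-|B\cap B'|=1$. Again this intersection has size at most one, so by Condition~\ref{adjacencycondition} $B$ and $B'$ share no hyperedge. As a cross-check, $|B\cap\overline{B'}|=1$ as well, while $\overline{B}\cap\overline{B'}$ consists of $A$ (five elements) together with no element of $\overline{A}$, since $B\cup B'=\overline{A}$.

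No step is a genuine obstacle. The only point needing care is to apply Lemma~\ref{condition} inside the ambient $6$-set $\overline{A}$ rather than in $[11]$, and to use the elementary identity $|B'\setminus B|=|B'|-|B\cap B'|$ to get the size-one intersection.
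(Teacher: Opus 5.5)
Your proof is correct and follows the paper's argument: you count the six $5$-subsets of $\overline{A}$, apply Lemma~\ref{condition} to get $|B\cap B'|=4$, and conclude non-adjacency via Condition~\ref{adjacencycondition}. You also handle the pairs $(A,B)$ explicitly, where $A\cap B=\emptyset$ settles non-adjacency; the paper leaves this case implicit.
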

  \begin{proof}
  There are six ways to choose five elements from $\overline{A}=fghijk$ to form a set in $\mathcal{F}$: $B_1=fghij,\; B_2=fghik,\; B_3=fghjk,\; B_4=fgijk,\; B_5=fhijk,\; B_6=ghijk$. By Lemma~\ref{condition}, for $i\neq j$, $|B_i\cap B_j|=4$. Hence, one of $B_i\cap B_j$, $B_i\cap\overline{B_j}$, $\overline{B_i}\cap B_j$, or $\overline{B_i}\cap\overline{B_j}$ has at most one element. By Condition~1, all the $B_i$s together with $A$ form a valid colour class in $3\text{-}QI(11,2)$.
  \end{proof}
\begin{lemma}
  \label{14_2}
      Let $A=abcde$ be a vertex of $3\text{-}QI(11,2)=(V, E)$. Consider a family $\mathcal{F}=\{B\in V: |A \cap B|=1, |B|=5\}$. Then, at most five from $\mathcal{F}$ along with $A$ can form a colour class in $3\text{-}QI(11, 2).$
  \end{lemma}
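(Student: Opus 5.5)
The plan is to reduce the statement to a question about $4$-subsets of the $6$-set $\overline{A}=fghijk$, using only Condition~\ref{adjacencycondition} and the sizes of the four cells determined by two vertices.

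\emph{Step 1: the allowed intersection sizes.} Take two distinct $5$-subsets $B,B'$ of $[11]$ and put $s=|B\cap B'|$. Then
\[
|B\cap\overline{B'}|=|\overline{B}\cap B'|=5-s, \qquad |\overline{B}\cap\overline{B'}|=11-(10-s)=1+s .
\]
By Condition~\ref{adjacencycondition}, $B$ and $B'$ can lie in a common colour class exactly when one of these four numbers is at most one. That happens precisely when $s\le 1$ or $s\ge 4$, so $s\in\{0,1,4\}$ since $B\neq B'$. Every $B\in\mathcal{F}$ satisfies $|A\cap B|=1$, so by Condition~\ref{adjacencycondition} it is automatically compatible with $A$. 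The task is therefore to bound the largest subfamily of $\mathcal{F}$ whose members pairwise meet in $0$, $1$ or $4$ elements.

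\emph{Step 2: encode members of $\mathcal{F}$.} Write each $B\in\mathcal{F}$ as $B=\{x_B\}\cup T_B$, where $x_B\in A$ and $T_B$ is a $4$-subset of $\overline{A}$. For two members,
\[
|B\cap B'|=[x_B=x_{B'}]+|T_B\cap T_{B'}|, \qquad |T_B\cap T_{B'}|\ge 4+4-6=2 .
\]
Hence $|B\cap B'|\ge 2$, and the only admissible value is $|B\cap B'|=4$. This forces one of two situations: either $x_B=x_{B'}$ and $|T_B\cap T_{B'}|=3$, or $x_B\neq x_{B'}$ and $T_B=T_{B'}$.

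\emph{Step 3: case analysis on the family $\mathcal{G}\subseteq\mathcal{F}$ in the colour class.}

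If all members of $\mathcal{G}$ share the same $x$, the sets $T_B$ are $4$-subsets of a $6$-set that pairwise meet in exactly $3$ elements. This is the step needing the most care. I will use the standard sunflower-type argument for $k$-sets pairwise meeting in $k-1$ elements: such a family either has a common $(k-1)$-core or lies inside a common $(k+1)$-set.
\begin{itemize}
\item With a common $3$-core, there are at most $6-3=3$ members.
\item Inside a common $5$-set, there are at most $\binom{5}{4}=5$ members.
\end{itemize}
So $|\mathcal{G}|\le 5$ in this case.

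Otherwise some $B_1,B_2\in\mathcal{G}$ have $x_{B_1}\neq x_{B_2}$, and then $T_{B_1}=T_{B_2}=:T$. Any other $C\in\mathcal{G}$ differs in its $x$ from at least one of $B_1,B_2$, so $T_C=T$. Consequently $C$ is determined by $x_C$, and distinct members have distinct $x$. Thus $|\mathcal{G}|\le |A|=5$.

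In both cases at most five members of $\mathcal{F}$ can join $A$ in a colour class, which proves the lemma. The main point to verify carefully is the structural claim in the first case, which can also be checked directly by hand for $4$-subsets of a $6$-set.
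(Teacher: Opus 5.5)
Your proof is correct and follows essentially the same route as the paper. Both split $\mathcal{F}$ by the element $x_B\in A$, both use pigeonhole to see that two members must meet in exactly $4$ elements (same $x$ with $|T_B\cap T_{B'}|=3$, or different $x$ with $T_B=T_{B'}$), and both treat the same two cases. The only difference is in the same-$x$ case, where the paper fixes $B_1=afghi$ and checks its $2\times 4$ array by hand; you instead invoke the star/co-star dichotomy for $4$-sets pairwise meeting in $3$ elements. That dichotomy is valid: if $T_1\cap T_2=C$, any further member either contains $C$ or lies in $T_1\cup T_2$, and a further member of each kind would meet in only $2$ elements. The paper's rows and columns are exactly your co-star and star cases.
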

  \begin{proof}
 For any $B \in\mathcal{F}$, the intersecting element is chosen from $A$ and the remaining from $\overline{A}=fghijk$, so $|\mathcal{F}|=\binom{5}{1} \binom{6}{4}=75$. For $x\in\{a,b,c,d,e\}$, consider the subfamily $\mathcal{F}_x=\{B\in\mathcal{F}:A\cap B=\{x\}\}$. Without loss of generality, suppose $B_1=afghi\in\mathcal{F}$. Then, two cases arise: (i) $\mathcal{F}\subseteq \mathcal{F}_a$; (ii) $\mathcal{F}\cap \mathcal{F}_x\neq\emptyset$ for at least one $x\neq a$.\\

 If $\mathcal{F}\subseteq \mathcal{F}_a$, then by Condition~1, for another vertex $B_2\in\mathcal{F}_a$ to belong to the same colour class as $B_1$, we must have $|B_1\cap B_2|\in\{0,1,4\}$. Since $a\in B_1\cap B_2$ and the remaining four digits are chosen from $\overline{A}$, the pigeonhole principle implies that at least two of these elements coincide, yielding $|B_1\cap B_2|\ge 3$. Hence $|B_1\cap B_2|=4$. Consequently, $B_2$ must contain exactly three digits from $fghi$, namely, one of $fgh$, $ghi$, $fhi$, or $fgi$. The choices of $B_2$ are arranged in the array shown in Figure~\ref{14_2Table}.
 
\begin{figure}[h]
\centering
    \begin{tabular}{|c|c|c|c|}
    \hline
        $afghj$ & $aghij$ & $afhij$&$afgij$\\
        \hline
        $afghk$ & $aghik$&$afhik$&$afgik$\\
        \hline   
    \end{tabular}  
    \caption{Vertex choices in $\mathcal{F}_a$ to be in a colour class with $A=abcde$ and $B_1=afghi$ in Lemma \ref{14_2}.}
    \label{14_2Table}
    \end{figure}
    
    Observe that any two sets from the same row of the array in Figure~\ref{14_2Table} intersect in $4$ digits; hence, by Condition~1, the sets in a row together with $B_1$ form a colour class of size $5$. Moreover, any two sets from different rows and different columns intersect in $3$ digits, violating Condition~1, and therefore cannot belong to the same colour class. Thus, choosing a set eliminates all options in the other row except the one in the same column. Consequently, selecting two sets from the same column yields a colour class of size $3$ that includes $B_1$. Hence, at most five sets from $\mathcal{F}_x$ (for any $x$) can belong to the same colour class. See Figure~\ref{tree1}. \\
    \begin{figure}[h]
    \centering
    \begin{tikzpicture}[
     edge from parent/.style={draw, black, thick},
     sibling distance=4cm,
     level distance=1cm]
     \node (root) {$B_1=xfghi$ (WLOG)}
      child {node {$B_2=xfghj$ (WLOG)}
        child {node {$B_3=xfghk$}}
          child {node {$B_3=xghij$ (WLOG)}
            child {node {$B_4=xfhij$ (WLOG)}
              child {node {$B_5=xfgij$ (WLOG)}}
            }
           }
          };
    \end{tikzpicture}
    \caption{Possibilities of forming colour classes in $\mathcal{F}_x$, in Lemma \ref{14_2}.}
    \label{tree1}
    \end{figure}
  
    Next, suppose $\mathcal{F}\cap \mathcal{F}_x\neq\emptyset$ for some $x\neq a$. Fix $B_1=afghi$ and choose $B_2\in\mathcal{F}_x$ with $x\neq a$. By Condition~1, $B_2$ must intersect $fghi$ in $0$, $1$, or $4$ digits. By the pigeonhole principle, only an intersection of $4$ digits is possible. Hence, the possible choices are $bfghi$, $cfghi$, $dfghi$, and $efghi$. Together with $B_1$, these five sets can be in a colour class containing $A$. Choosing an additional set from $\mathcal{F}_a$ eliminates all options from $\mathcal{F}_x$ for $x\neq a$, and vice versa, since such pairs intersect in $3$ digits. Therefore, at most five vertices from $\mathcal{F}$ can form a colour class with $A$: either all vertices from $\mathcal{F}_x$ for some fixed $x$, or one vertex from each $\mathcal{F}_x$ sharing a common $4$-subset of $\overline{A}$. 
    \end{proof}
      
\begin{lemma}
  \label{14_3}
      Let $A=abcde$ be a vertex of $3\text{-}QI(11,2)=(V, E)$. Consider a family $\mathcal{F}=\{B\in V: |A \cap B|=4, |B|=5\}$. Then at most six from $\mathcal{F}$ along with $A$ can form a colour class in $3\text{-}QI(11, 2).$
  \end{lemma}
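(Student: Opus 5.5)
The plan is to parametrize $\mathcal{F}$ explicitly and reduce Condition~1 to a simple combinatorial constraint on pairs. Every $B\in\mathcal{F}$ has the form $B=(A\setminus\{p\})\cup\{u\}$ with $p\in A=\{a,b,c,d,e\}$ and $u\in\overline{A}=\{f,g,h,i,j,k\}$. So $B$ is encoded by the pair $(p,u)$, and $|\mathcal{F}|=5\cdot 6=30$. Each such $B$ is compatible with $A$ by Condition~1, because $|A\cap\overline{B}|=|\{p\}|=1$. So the only constraints are between pairs of members of $\mathcal{F}$.

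Next, I will compute the four intersections for $B_1\leftrightarrow(p,u)$ and $B_2\leftrightarrow(q,w)$, with $B_1\neq B_2$.
\begin{itemize}
\item If $p=q$ and $u\neq w$, or if $p\neq q$ and $u=w$, then $|B_1\cap B_2|=4$. Hence $|B_1\cap\overline{B_2}|=1$, and Condition~1 allows both sets in one colour class.
\item If $p\neq q$ and $u\neq w$, then $|B_1\cap B_2|=3$, $|B_1\cap\overline{B_2}|=|\overline{B_1}\cap B_2|=2$, and $|\overline{B_1}\cap\overline{B_2}|=11-7=4$. All four intersections have size at least $2$, so by Condition~1 the sets cannot share a colour.
\end{itemize}
Thus a subfamily of $\mathcal{F}$ lies in a colour class with $A$ if and only if its pairs pairwise agree in at least one coordinate. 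Equivalently, viewing the pairs as cells of a $5\times 6$ grid (rows indexed by $p$, columns by $u$), any two chosen cells must share a row or a column.

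The key step is the claim that such a set of cells lies entirely in one row or entirely in one column. Suppose $(p,u)$ and $(p,w)$ are chosen with $u\neq w$. Any third cell $(q,z)$ with $q\neq p$ would have to satisfy $z=u$ and $z=w$, which is impossible. So every chosen cell lies in row $p$. Symmetrically, if two chosen cells share a column, all chosen cells lie in that column. Therefore the family has size at most $\max(6,5)=6$.

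The bound is attained by fixing $p$ and taking all six choices of $u$. The resulting six sets pairwise intersect in $4$ elements, as in Lemma~\ref{condition}, so together with $A$ they form a valid colour class. This is also the step needing the most care: one must check that no "triangle" configuration of three pairwise-agreeing pairs escapes a single row or column, and the argument above rules this out.
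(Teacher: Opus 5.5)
Your proof is correct and follows the paper's argument: you use the same grid on $\mathcal{F}$, indexed by the removed element of $A$ and the added element of $\overline{A}$, and observe that cells sharing a row or column meet in $4$ elements while all other pairs meet in $3$ and violate Condition~\ref{adjacencycondition}, so a colour class lies in a single line of the grid. Beyond the paper, you check all four intersection sizes for the incompatible pairs and prove that pairwise row-or-column agreement forces a single line, both of which the paper only asserts; your one slip is cosmetic, since the fixed-$p$ family simply shares $A\setminus\{p\}$ and Lemma~\ref{condition} applies to the fixed-$u$ family instead.
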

  \begin{proof}
       For any $B \in \mathcal{F}$, there are $\binom{5}{4}$ options to choose four digits common with $A$ and $\binom{6}{1}$ options to select the fifth digit from $\overline{A}=fghijk$ implying $|\mathcal{F}|= \binom{5}{4} \binom{6}{1} = 30$. We lay out the members of $\mathcal{F}$ as a $6 \times 5$ array in Figure \ref{14_1table} such that each column corresponds to a $4$-subset of $A$, and each row corresponds to a digit from $\overline{A}$. \\
       
       \begin{figure}[h]
       \centering
		\begin{tabular}{|c|c|c|c|c|}
  \hline
		$abcdf$ & $abdef$ & $acdef$ & $bcdef$ & $abcef$ \\
  \hline
		$abcdg$ & $abdeg$ & $acdeg$ & $bcdeg$ & $abceg$ \\
  \hline
		$abcdh$ & $abdeh$ & $acdeh$ & $bcdeh$ & $abceh$ \\
  \hline
		$abcdi$ & $abdei$ & $acdei$ & $bcdei$ & $abcei$ \\
  \hline
		$abcdj$ & $abdej$ & $acdej$ & $bcdej$ & $abcej$ \\
  \hline
		$abcdk$ & $abdek$ & $acdek$ & $bcdek$ & $abcek$ \\
  \hline
	\end{tabular}
 
 \caption{An arrangement of elements of $\mathcal{F}=\{B\in V: |A\cap B|=4, |B|=5\}$ as described in Lemma \ref{14_3}.}
 \label{14_1table}
	\end{figure} 

     Note that the entries within a column form a colour class since Condition \ref{adjacencycondition} holds. Also, by Lemma~\ref{condition}, any two sets within a row intersect in $4$ digits, so the entries within a row also form a colour class. Further, any two entries from different rows and different columns intersect in $3$ digits (as they correspond to $4$-subsets of $abcde$), which violates Condition~1. Hence, such pairs cannot belong to the same colour class. Thus, a colour class with $A$ consists of either entries from a single column or entries from a single row, and hence contains at most six sets from $\mathcal{F}$.
     \end{proof}
  
  \begin{lemma}
   \label{14_4}
       Let $A=abcde$ be a vertex of $3\text{-}QI(11,2)=(V, E)$. Consider a family $\mathcal{F}=\{B\in V: |A \cap B|=0, |B|=4\}$. Then, $|\mathcal{F}|=15$ and at most five from $\mathcal{F}$ along with $A$ can form a colour class in $3\text{-}QI(11, 2).$
  \end{lemma}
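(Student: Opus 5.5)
The count is immediate: every $B\in\mathcal{F}$ is a $4$-subset of $\overline{A}=fghijk$, so $|\mathcal{F}|=\binom{6}{4}=15$. Each such $B$ satisfies $A\cap B=\emptyset$, so by Condition~\ref{adjacencycondition} it may be placed in a colour class with $A$. It therefore remains to bound the largest subfamily of $\mathcal{F}$ whose members are pairwise non-adjacent in the sense of Condition~\ref{adjacencycondition}.

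\textbf{Step 1: translate Condition~\ref{adjacencycondition} for two members of $\mathcal{F}$.} Take distinct $B,B'\in\mathcal{F}$. Both are $4$-subsets of the $6$-set $\overline{A}$, so $|B\cap B'|\in\{2,3\}$. The four intersections are as follows.
\begin{itemize}
\item $B\cap B'$ has size $2$ or $3$.
\item $B\cap\overline{B'}$ and $\overline{B}\cap B'$ each have size $4-|B\cap B'|\in\{1,2\}$.
\item $\overline{B}\cap\overline{B'}$ contains $A$, so it has size at least $5$.
\end{itemize}
Hence $B$ and $B'$ can share a colour class if and only if $|B\cap B'|=3$.

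\textbf{Step 2: pass to complements in $\overline{A}$.} Write $D=\overline{A}\setminus B$, which is a $2$-subset of a $6$-set. Then $|B\cap B'|=3$ exactly when $|D\cap D'|=1$. So the problem becomes: find the largest family of $2$-subsets of a $6$-set that pairwise meet in exactly one point. Equivalently, find a largest set of edges of $K_6$ that pairwise share a vertex.

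\textbf{Step 3: classify such families.} Pairwise-intersecting edge sets in a graph are either stars or triangles. To see this, suppose the edges do not all share a common vertex. Take two edges $uv$ and $uw$. Some third edge avoids $u$; to meet both $uv$ and $uw$ it must be $vw$. Any further edge must meet all three sides of the triangle $uvw$, and no edge can do that. So a non-star family has size at most $3$.

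A star at a vertex of $K_6$ has $5$ edges. Translating back, these are the five sets $B\in\mathcal{F}$ that contain a fixed element $y\in\overline{A}$, namely the $4$-subsets of $\overline{A}$ containing $y$. Any two of them meet in exactly $3$ elements, so together with $A$ they form a colour class. Therefore at most five members of $\mathcal{F}$ can lie in a colour class with $A$, and five is attained.

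The only step needing care is the star-or-triangle classification in Step 3, and the short argument above covers it. The arithmetic in Step 1 is routine.
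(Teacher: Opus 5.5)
Your upper bound is correct, and you reach it by a different route from the paper. The paper fixes $B_1=fghi$ and lists the eight $4$-subsets of $\overline{A}$ meeting $B_1$ in three elements as a $2\times 4$ array. Rows are indexed by the added element $j$ or $k$, and columns by the retained $3$-subset of $fghi$. It notes that two entries are compatible exactly when they share a row or a column, and reads off $1+4=5$ from a full row; a column gives only $3$. You instead complement inside $\overline{A}$, which turns the question into one about pairwise-intersecting edges of $K_6$, and then use the star-or-triangle dichotomy.

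The underlying picture is the same: a row of the paper's array together with $B_1$ is a star, and a column together with $B_1$ is a triangle. Your version has three advantages:
\begin{itemize}
\item it avoids the enumeration around a fixed $B_1$;
\item it verifies all four intersections of Condition~\ref{adjacencycondition} explicitly, including that $\overline{B}\cap\overline{B'}\supseteq A$, where the paper simply states that intersection size $2$ is excluded;
\item it classifies all maximal compatible subfamilies.
\end{itemize}
That classification is what the paper uses later without comment. When it says a Case~2b vertex $afgh$ cuts Case~2a down to $\{fghi,fghj,fghk\}$ or $\{fijk,gijk,hijk\}$, these are the two triangles on $\{i,j,k\}$ and on $\{f,g,h\}$.

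One sentence in your last paragraph is wrong. Since $B=\overline{A}\setminus D$, the star of edges $D$ through $y$ corresponds to the sets $B$ that \emph{omit} $y$. These are the five $4$-subsets of the $5$-set $\overline{A}\setminus\{y\}$, and they pairwise meet in three elements by Lemma~\ref{condition}. The $4$-subsets of $\overline{A}$ that \emph{contain} $y$ number ten, not five. Two of them can meet in only two elements (e.g.\ $fghi$ and $fgjk$), so they do not form a colour class. This affects only your attainment remark, not the upper bound the lemma asserts, but it should be corrected.
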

  \begin{proof}
        Since the elements of $\mathcal{F}$ are $4$-subsets of $\overline{A}=fghijk$, we have $|\mathcal{F}|=\binom{6}{4}=15$. However, not all of them can belong to the same colour class. By Condition~\ref{adjacencycondition}, for $B_1,B_2\in\mathcal{F}$ to be in the same colour class, $|B_1\cap B_2|\neq 2$. As the subsets are chosen from six elements, the pigeonhole principle implies $|B_1\cap B_2|\ge 2$, and hence $|B_1\cap B_2|=3$. Without loss of generality, let $B_1=fghi$. Then, the next vertex must contain one of the $3$-subsets $fgh$, $ghi$, $fhi$, or $fgi$, and by Lemma~\ref{condition}, any two of these intersect in exactly two digits. The possible options are listed in an array shown in Figure~\ref{14_4table}.
    
    \begin{figure}[h]
        \centering
        \begin{tabular}{|c|c|c|c|}
            \hline
            $fghj$ & $ghij$ & $fhij$& $fgij$\\
            \hline
            $fghk$ & $ghik$& $fhik$&$fgik$\\
            \hline
        \end{tabular}
        \caption{Vertex choices to be in a colour class with $A=abcde$ and $B_1=fghi$ in Lemma~\ref{14_4}.}
        
        \label{14_4table}
    \end{figure}   
    Note that, by Condition~\ref{adjacencycondition}, for each entry, only the entries in the same row or the same column of the array in Figure~\ref{14_4table} can belong to its colour class. Hence, at most $5$ of these $15$ remain valid (see Figure~\ref{tree2}).
   
    \begin{figure}[h]
\centering
\begin{tikzpicture}[
edge from parent/.style={draw, black, thick},
sibling distance=5cm,
level distance=1cm
]
\node (root) {$B_1=fghi$ (WLOG)}
child {node {$B_2=fghj$ (WLOG)}
    child {node {$B_3=fghk$}}
    child {node {$B_3=ghij$ (WLOG)}
        child {node {$B_4=fhij$ (WLOG)}
            child {node {$B_5=fgij$ (WLOG)}}
        }
    }
};
\end{tikzpicture}
\caption{Possibilities of forming colour classes in $\mathcal{F}$, in Lemma \ref{14_4}.}
\label{tree2}
\end{figure}

\end{proof}
  
  \begin{lemma}
  \label{14_5}
       Let $A=abcde$ be a vertex of $3\text{-}QI(11,2)=(V, E)$. Consider a family $\mathcal{F}=\{B\in V: |A \cap B|=1, |B|=4\}$. Then at most ten from $\mathcal{F}$ along with $A$ can form a colour class in $3\text{-}QI(11, 2).$
  \end{lemma}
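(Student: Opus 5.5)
Every $B\in\mathcal{F}$ has the form $B=\{x\}\cup T$ with $x\in A$ and $T$ a $3$-subset of $\overline{A}=fghijk$, so $|\mathcal{F}|=5\binom{6}{3}=100$. Each such $B$ can lie in a colour class with $A$, since $|A\cap B|=1$. The first step is to turn Condition~\ref{adjacencycondition} into a rule on the pairs $(x,T)$. For distinct $B_1,B_2\in\mathcal{F}$ we have $|B_1\cup B_2|\le 8$, so $|\overline{B_1}\cap\overline{B_2}|\ge 3$. Also $|B_1\setminus B_2|\le 1$ exactly when $|B_1\cap B_2|=3$. Hence $B_1$ and $B_2$ can share a colour class if and only if $|B_1\cap B_2|\in\{0,1,3\}$. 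Writing $B_i=\{x_i\}\cup T_i$, this becomes:
\begin{itemize}
\item if $x_1=x_2$, we need $|T_1\cap T_2|\in\{0,2\}$;
\item if $x_1\neq x_2$, we need $|T_1\cap T_2|\in\{0,1,3\}$.
\end{itemize}
Note that $T_1=T_2$ with $x_1\ne x_2$ is allowed, and $|T_1\cap T_2|=0$ means $T_2=\overline{A}\setminus T_1$.

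Next I would show that $10$ is attained, so that the plan aims at the right number. Take a $3$-subset $T$ of $\overline{A}$ with complement $T^c$, and let the class be all ten sets $\{x\}\cup T$ and $\{x\}\cup T^c$ for $x\in A$. Two sets with the same $T$ meet in $3$ elements. Two sets with complementary $T$'s meet in $0$ or $1$ element. So all pairs are compatible.

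For the upper bound, let $\mathcal{C}\subseteq\mathcal{F}$ be a colour class and let $\mathcal{T}$ be the set of $3$-subsets $T$ that occur in it. For each $T\in\mathcal{T}$ let $X_T=\{x:\{x\}\cup T\in\mathcal{C}\}$. The compatibility rules give:
\begin{itemize}
\item if $|T\cap T'|=2$, then $X_T=X_{T'}=\{x\}$ is a single common element, because any other element $y$ would give a pair with $x\neq y$ and intersection $2$;
\item if $|T\cap T'|=1$, then $X_T\cap X_{T'}=\emptyset$;
\item if $T'=T^c$, there is no restriction.
\end{itemize}
I would then split into cases on $\mathcal{T}$. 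If $\mathcal{T}\subseteq\{T,T^c\}$, then $|\mathcal{C}|\le 2\cdot 5=10$. Otherwise $\mathcal{T}$ contains two sets meeting in $1$ or $2$ elements.

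The case where $\mathcal{T}$ contains two sets meeting in $2$ elements is handled as follows. Consider the graph on $\mathcal{T}$ whose edges join sets meeting in $2$ elements. Each non-singleton component of this graph has all its $X_T$ equal to one common singleton $\{x\}$. That component's members, with $x$ fixed, form a family of $3$-subsets of a $6$-set pairwise meeting in $0$ or $2$ elements. A short argument in the spirit of Lemma~\ref{14_4} shows such a family has at most $4$ members; the extremal case is the four $3$-subsets of a common $4$-set. Such a component therefore contributes at most $4$ sets. Distinct components whose $T$'s meet in $1$ element must use disjoint sets of $x$'s. Complementary $T$'s place no constraint, but each $T$ has only one complement.

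The main obstacle is the case where $\mathcal{T}$ contains two sets meeting in $1$ element, i.e.\ the pairs with $|T\cap T'|=1$. They force the sets $X_T$ to be pairwise disjoint subsets of the $5$-element set $A$, so such $T$'s share at most $5$ elements of $A$ among them. I would bound $|\mathcal{C}|=\sum_{T\in\mathcal{T}}|X_T|$ by pairing each $T$ with $T^c$. Any $T'$ outside a pair $\{T,T^c\}$ meets each of $T$ and $T^c$ in $1$ or $2$ elements, since the two intersections sum to $3$. That $T'$ therefore either takes $x$'s disjoint from those of $T$ and of $T^c$, or is confined to a single $x$. A short counting over the complementary pairs in $\mathcal{T}$ then gives $|\mathcal{C}|\le 10$. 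This is the step I would check most carefully, probably by enumerating the configurations of complementary pairs, as the paper does with its trees in Figures~\ref{tree1} and~\ref{tree2}.
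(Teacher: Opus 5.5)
\textbf{There is a genuine gap: the upper bound is not proved.}

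Your translation of Condition~\ref{adjacencycondition} into rules on the pairs $(x,T)$ is correct, and so are the derived rules for the sets $X_T$. Your ten-set example $\{x\}\cup T$, $\{x\}\cup T^c$ ($x\in A$) correctly shows that the bound is attained. What is missing is the upper bound, which is the entire content of the lemma.

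In the case where $\mathcal{T}$ contains two sets meeting in $2$ elements, you list constraints: a component carries one common singleton, has at most $4$ members, and components meeting in $1$ element use different $x$'s. You never combine these into $|\mathcal{C}|\le 10$. For instance, you would still have to show that once some component has three or more members, very little else fits.

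In the remaining case the decisive step is only asserted (``a short counting over the complementary pairs then gives $|\mathcal{C}|\le 10$''). The dichotomy before it is also loose. If both $T$ and $T^c$ lie in $\mathcal{T}$, every other $T'$ meets one of them in $2$ elements. So $T'$ is never disjoint in $x$'s from both, and it forces that member's $X$ to be a singleton too. Moreover, ``pairwise disjoint $X_T$'' holds only for pairwise $1$-intersecting subfamilies, not for $\mathcal{T}$ as a whole. (Minor point: size-$4$ families pairwise meeting in $0$ or $2$ also include the sunflowers $P\cup\{u\}$, $u\in\overline{A}\setminus P$, not only the $3$-subsets of a $4$-set.)

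\textbf{How to close it.} Sum over $x$ rather than over $T$. This is also how the paper organizes its proof: it uses the families $\mathcal{F}_x$, then does a case analysis on whether some cross-family pair is disjoint or all cross pairs meet in $1$ or $3$ elements.

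Let $\mathcal{T}_x=\{T: x\in X_T\}$. Then $|\mathcal{C}|=\sum_{x\in A}|\mathcal{T}_x|$, and each $\mathcal{T}_x$ is pairwise $\{0,2\}$-intersecting with $|\mathcal{T}_x|\le 4$. A complementary pair in $\mathcal{T}_x$ forces $|\mathcal{T}_x|=2$, since a third set would meet one of the pair in $1$ element. So if every $|\mathcal{T}_x|\le 2$, you get $|\mathcal{C}|\le 10$ directly.

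Now suppose some $|\mathcal{T}_x|\ge 3$. Three of its members pairwise meet in exactly $2$ elements, so they are either $P\cup\{u\},P\cup\{v\},P\cup\{w\}$ for a pair $P$, or three $3$-subsets of a $4$-set $Q$. A $3$-set meeting none of them in exactly $2$ elements must then be a $3$-subset of $\overline{A}\setminus P$, respectively of the form $\{p\}\cup(\overline{A}\setminus Q)$ with $p\in Q$. Either way, all of $\bigcup_{y\neq x}\mathcal{T}_y$ lies in a family of four sets that pairwise meet in exactly $2$. Sets attached to different $y$'s may not meet in $2$ elements. Hence either a single $y$ contributes (at most $4$), or all contributing $y$'s use the same single set (at most $4$ in total). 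This gives $|\mathcal{C}|\le 4+4=8$, and the bound of $10$ follows in all cases.
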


  \begin{proof}
     Each member of $\mathcal{F}$ contains one digit from $A=abcde$ and three from $\overline{A}=fghijk$, so $|\mathcal{F}|=\binom{5}{1}\binom{6}{3}=100$. Let $\mathcal{F}_x=\{B\in V: A\cap B=\{x\},\, |B|=4\}, \quad x=a,b,c,d,e.$ By Condition~\ref{adjacencycondition}, the only possible choices from $\mathcal{F}_x$ that can be in the same colour class (up to permutation) are shown by the three routes in Figure~\ref{tree3}. Hence, each $\mathcal{F}_x$ contributes at most $4$ vertices to a colour class containing $A$, and therefore at most $5\times4=20$ vertices from $\mathcal{F}$ can belong to such a colour class. We next improve this upper bound to $10$. Let $\mathcal{S}\subset \mathcal{F}$ be a set of vertices in a colour class with $A$. \\
    
       \begin{figure}[h]
       \begin{tikzpicture}[edge from parent/.style={draw, black, thick},sibling distance=5.5cm,level 1/.style={level distance=1cm}]
       \node (root) {$B_1=xfgh$ (WLOG)}
        child {node {$B_2=xijk$ ($|B_1 \cap B_2|=1$)}}
        child {node {$B_2=xfgi$ (WLOG, $|B_1 \cap B_2|=3$)}
        child {node {$B_3=xfgj$}
        child {node {$B_4=xfgk$}}
    }
    child {node {$B_3=xghi$ (WLOG)}
        child {node {$B_4=xfhi$ (WLOG)}}
    }
};

\end{tikzpicture}

\caption{Possibilities of forming colour classes in $\mathcal{F}_x$, in Lemma \ref{14_5}.}
\label{tree3}
\end{figure}

\noindent \textbf{Case 1.} Suppose $\mathcal{S}$ contains two vertices $B_1 \in \mathcal{F}_x$ and $B_2 \in \mathcal{F}_y$, where $x \ne y$ and $B_1 \cap B_2 = \emptyset$. Without loss of generality, let $B_1 = xfgh$ and $B_2 = yijk$. Then, for each $z \notin \{x,y\}$, the family $\mathcal{F}_z$ can contribute at most two vertices to $\mathcal{S}$, namely, $zfgh$ and $zijk$. Moreover, besides $B_1$, the family $\mathcal{F}_x$ can contribute at most three additional vertices from:
$xijk$ (intersecting $B_1$ in $1$ digit), $xfgi,\, xfgj,\, xfgk,\, xfhi,\, xfhj,\, xfhk,\, xghi,\, xghj,\,$ $ xghk$ (each intersecting $B_1$ in $3$ digits). Similarly, in addition to $B_2$, $\mathcal{F}_y$ can contribute at most three vertices from: $yfgh$ (intersecting $B_2$ in $1$ digit), $yfij,\, ygij,\, yhij,\, yfik,\, ygik,\,$ $ yhik,\, yfjk,\, ygjk,\, yhjk$ (each intersecting $B_2$ in $3$ digits). Now, the following cases arise for the choice of $B_3$. 
\begin{enumerate}
    \item[(i)] If $B_3=xijk\in \mathcal{S}$, then no further vertex can be chosen from $\mathcal{F}_x$, and the only possible vertex from $\mathcal{F}_y$ is $yfgh$. Thus, each family $\mathcal{F}_x$, $x\in\{a, b, c, d, e\}$ contributes at most two vertices and hence, $|\mathcal{S}|\leq 10$.
    
    \item[(ii)] If $xijk\notin \mathcal{S}$, assume without loss of generality that $B_3=xfgi \in \mathcal{S}$. Then the remaining options in $\mathcal{F}_y$ are $yhij, yhik,$ and $yhjk$ (each intersecting $B_3$ in $1$ digit), while for each $\mathcal{F}_z$, $z \notin {x,y}$, only $zijk$ can be chosen (also intersecting $B_3$ in one digit). If $\mathcal{S}\subseteq \mathcal{F}_x\cup \mathcal{F}_y$, then $|S|\leq 8$. Otherwise, if $zijk\in \mathcal{S}$ for some $z \notin {x,y}$, then $yhij, yhik,$ and $yhjk$ become unavailable, yielding at most $4\,(\mathcal{F}_x)+1\, (B_2)+ 1\,(\mathcal{F}_z)+1\,(\mathcal{F}_u)+1\,(\mathcal{F}_v)=8$ vertices in $\mathcal{S}$. 
    \item[(iii)] If $\mathcal{S}\cap \mathcal{F}_x=\{B_1\}$, then two cases arise. If $|\mathcal{F}_y\cap \mathcal{S}|\leq 2$, that is, $\mathcal{F}_y\cap \mathcal{S}\subseteq\{yijk, yfgh\}$, then $|\mathcal{S}|\leq 1\, (B_1)+ 2\, (\mathcal{F}_y)+ 2\, (\mathcal{F}_z) + 2\, (\mathcal{F}_u) + 2\,(\mathcal{F}_v)=9$ while if $|\mathcal{F}_y\cap \mathcal{S}|>2 $, that is, when $\mathcal{S}$ has at least one vertex from $\mathcal{F}_z$ that intersects $B_2$ in $3$ digits, then $\mathcal{F}_z$ can contribute only $zfgh$, and hence $|\mathcal{S}|\leq1\, (B_1)+ 4\, (\mathcal{F}_y)+ 1\, (\mathcal{F}_z) + 1\, (\mathcal{F}_u) + 1(\mathcal{F}_v)=8$. 
\end{enumerate}

\noindent \textbf{Case 2.}  
In $\mathcal{S}$, every pair of vertices $B_1$ and $B_2$ from different families $\mathcal{F}_x$, $x \in {a,b,c,d,e}$, intersect in exactly $1$ or $3$ digits. \\

\noindent \textit{Case 2.1.} If $|B_1\cap B_2|=1$, assume without loss of generality that $B_1=xfgh$ and $B_2=yfij$. Then, besides $B_1$, the family $\mathcal{F}_x$ can contribute at most two additional vertices from: $xfgk, xfhk, xghi, xghj$ (each intersecting $B_1$ in $3$ digits and $B_2$ in $1$ digit). Similarly, besides $B_2$, $\mathcal{F}_y$ can contribute at most two vertices from: $yfik, yfjk, ygij, yhij$ (each intersecting $B_1$ in $1$ digit and $B_2$ in $3$ digits). Moreover, each $\mathcal{F}_z$, $z\notin \{x, y\}$, can contribute either $zfgh$ (intersecting $B_1$ in 3 and $B_2$ in $1$ digit), or  $zfij$ (intersecting $B_1$ in $1$ and $B_2$ in $3$ digits), or at most two vertices from: $zgik, zgjk, zhik, zhjk$ (intersecting $B_1$ in $1$ and $B_2$ in $1$ digit).  We examine these possibilities for $B_3\in \mathcal{F}_z$ in turn.
\begin{enumerate}
   \item[(i)] If $|\mathcal{F}_z\cap \mathcal{S}|\leq 1$ for every $z\notin \{x, y\}$, then $|\mathcal{S}|\leq 3\,(\mathcal{F}_x) + 3\,(\mathcal{F}_y) + 1\,(\mathcal{F}_z) + 1\,(\mathcal{F}_u) + 1\,(\mathcal{F}_v)= 9$.   
  \item[(ii)] If $|\mathcal{F}_z\cap \mathcal{S}|= 2$, say $zgik$ and $zgjk$ (WLOG), then each of $\mathcal{F}_u$ and $\mathcal{F}_v$ can contribute at most one vertex $(ufgh \text{ or } ufij; vfgh \text{ or } vfij)$. Hence $|\mathcal{S}|\leq 3\,(\mathcal{F}_x) + 3\,(\mathcal{F}_y) + 2\,(\mathcal{F}_z)+1\,(\mathcal{F}_u) + 1\,(\mathcal{F}_v)= 10$.\\
\end{enumerate}

\noindent \textit{Case 2.2.} If $|B_1\cap B_2|=3$, assume without loss of generality that $B_1=xfgh$ and $B_2=yfgh$. By Condition~\ref{adjacencycondition}, each $\mathcal{F}_z$, $z\notin\{x, y\}$, can contribute at most three vertices from: $zfgh$ (intersecting $B_1$ and $B_2$ in $3$ digits), $zfij, zfik, zfjk, zgij, zgik, zgjk, zhij, zhik,$ $ zhjk$ (intersecting $B_1$ and $B_2$ in $1$ digit). 
\begin{enumerate}
    \item[(i)] If $B_3=zfgh$, then $\mathcal{F}_z$ cannot contribute any additional vertex, since any such vertex would intersect $B_3$ in $2$ digits. Moreover, any vertex from $\mathcal{F}_x$ intersecting $B_1$ at $3$ digits would intersect $B_2$ at $2$ digits and is therefore invalid. Hence, besides $B_1$, the only possible vertex from $\mathcal{F}_x$ is $xijk$ (intersecting $B_1$ in $1$ digit). If $B_4=xijk$, then $B_3, B_4\in\mathcal{S}$ and the result follows from Case~1. Similarly, if $B_4=yijk$ then $B_3, B_4\in\mathcal{S}$ and the result follows from Case~1. Otherwise, if $B_4\notin \mathcal{F}_x\cup\mathcal{F}_y$, then $|\mathcal{S}|\leq 1\, (B_1)+ 1\, (B_2)+ 1\, (B_3) + 3\, (\mathcal{F}_u) +3\, (\mathcal{F}_v) =9$. 
    \item[(ii)] If $zfgh\notin \mathcal{S}$, assume without loss of generality that $B_3=zfij\in \mathcal{S}$. Then $|B_1\cap B_3|=1$ and result follows from Case~2.1. 
\end{enumerate}

Therefore, in all the cases, $|\mathcal{S}|\leq 10$, which establishes the claim. \end{proof}
 
  \begin{lemma}
  \label{14_6}
       Let $A=abcde$ be a vertex of $3\text{-}QI(11,2)=(V, E)$. Consider a family $\mathcal{F}=\{B\in V: |A \cap B|=3, |B|=4\}$. Then, at most eight from $\mathcal{F}$ along with $A$ can form a colour class in $3\text{-}QI(11, 2).$
  \end{lemma}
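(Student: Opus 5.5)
The plan is to reduce the colour-class question on $\mathcal{F}$ to a small extremal problem on the edges of $K_5$ and then solve that problem by a short case analysis. First I would fix notation. Every $B\in\mathcal{F}$ can be written uniquely as $B=T\cup\{y\}$ with $T$ a $3$-subset of $A=abcde$ and $y\in\overline{A}=fghijk$, so $|\mathcal{F}|=\binom{5}{3}\cdot 6=60$.

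Next I would apply Condition~\ref{adjacencycondition} to two distinct $4$-subsets $B_1,B_2$ of $[11]$ with $|B_1\cap B_2|=k$. The four intersections have sizes $k$, $4-k$, $4-k$ and $3+k$. Hence $B_1,B_2$ may share a colour exactly when $k\le 1$ or $k\ge 3$, that is, exactly when $k\neq 2$.

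For $B=T\cup\{y\}$ and $B'=T'\cup\{y'\}$ we have $|B\cap B'|=|T\cap T'|+[y=y']$, and $|T\cap T'|\ge 1$ since both are $3$-subsets of a $5$-set. Checking the cases, $B$ and $B'$ are compatible if and only if either $y=y'$ and $|T\cap T'|=2$, or $y\neq y'$ and $|T\cap T'|\in\{1,3\}$. In the second case this means $T=T'$ or $|T\cap T'|=1$.

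I would then pass to complements. Put $P=A\setminus T$, an edge (a $2$-subset) of $K_5$ on vertex set $A$. Then $|T\cap T'|=1+|P\cap P'|$. So a colour class $\mathcal{S}\subseteq\mathcal{F}$ with $A$ corresponds to an assignment, to each $y\in\overline{A}$, of an edge set $E_y$ of $K_5$ with two properties:
\begin{itemize}
\item within each $E_y$, distinct edges meet in exactly one vertex, so $E_y$ is a star or a triangle;
\item for $y\neq y'$, every edge of $E_y$ is equal to or disjoint from every edge of $E_{y'}$.
\end{itemize}
The goal is to show $\sum_y|E_y|\le 8$.

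The key step, and the one I expect to carry the argument, is the following observation. Suppose $|E_y|\ge 2$, containing edges $e_1,e_2$ that share a vertex. Any edge $f$ of another colour cannot equal $e_1$, because $f$ would then meet $e_2$ in exactly one vertex. So $f$ must be disjoint from $e_1\cup e_2$, which has $3$ vertices. This forces $f$ to be the unique edge on the remaining $2$ vertices, and in fact $f$ must avoid every vertex covered by $E_y$.

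Consequently at most one colour has $|E_y|\ge 2$. The cases are then as follows:
\begin{itemize}
\item If no colour has two edges, the edges used are pairwise equal or disjoint and there are at most $6$ colours, giving at most $6$.
\item If $E_y$ is a triangle, it covers $3$ vertices, so each of the other five colours gets at most the single complementary edge: $3+5=8$.
\item If $E_y$ is a star with $2$ edges, it covers $3$ vertices: $2+5=7$.
\item If $E_y$ is a star with $3$ or $4$ edges, it covers $4$ or $5$ vertices, so no other colour can contribute, giving at most $4$.
\end{itemize}
Hence $|\mathcal{S}|\le 8$. For completeness I would note that the bound is attained, for example by $\{abcf,abdf,acdf\}$ together with $cde y$ for each $y\in\{g,h,i,j,k\}$; this matches the triangle case with complementary edge $\{d,e\}$ for each other colour.
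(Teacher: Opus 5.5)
Your proof of the upper bound is correct, and it takes a different route from the paper. The paper also splits $\mathcal{F}$ by the element of $\overline{A}$. It then bounds each class $\mathcal{F}_x$ by $4$ via the Ahlswede--Khachatrian theorem, fixes $B_1=abcf$, and splits on whether the other classes use only $abcx$ or some vertex among $adex,bdex,cdex$, counting survivors in each case.

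Your passage to complements $P=A\setminus T$ turns the two compatibility rules into ``pairwise intersecting'' within a colour and ``equal or disjoint'' across colours. After that, one observation does the work: a colour with two edges pushes every other colour off the vertices it covers. This leaves at most one rich colour and a four-line case analysis. The gain is a self-contained, symmetric argument: you need no appeal to Theorem~\ref{AK ekr}, which is heavy machinery for pairwise intersecting $2$-subsets of a $5$-set, and no anchoring at a particular vertex. It also identifies the extremal structure exactly: a triangle in one colour plus the complementary edge in each of the other five colours. The paper's coordinate-level split, in turn, produces explicit vertex lists such as $abcx$ versus $adex$, which are reused later in Lemma~\ref{comb2cd} and Lemma~\ref{qill2}.

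Your sharpness example is not needed for the statement, but it is wrong as written. The complements of $abcf,abdf,acdf$ are $de,ce,be$. That is a star at $e$ covering four vertices, not a triangle, and indeed $acdf\cap cdeg=\{c,d\}$, so these two vertices cannot share a colour. Also, $cdey$ corresponds to $P=\{a,b\}$, not $\{d,e\}$.

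To fix it, replace $acdf$ by $abef$. Then $\{abcf,abdf,abef\}$ is the triangle on $\{c,d,e\}$, whose complementary edge is $\{a,b\}$. Together with $cdey$ for $y\in\{g,h,i,j,k\}$, this gives a valid class of eight vertices, the same shape as the paper's example $\{abcf,bcdf,bcef\}\cup\{adey\}$ in Lemma~\ref{comb2cd}.
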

  
  \begin{proof}
  With a fixed $3$-subset of $A$, there are $6$ vertices in $\mathcal{F}$, corresponding to each digit in $\overline{A}$ and hence $|\mathcal{F}|=\binom{5}{3} \binom{6}{1} = 60$. Let $\mathcal{F}_x = \{B\in \mathcal{F}: x \in \overline{A}\cap B, |B|=4\}$, for $x\in\{f, g, h, i, j, k\}$. By Theorem \ref{AK ekr} (taking $n=5, k=3, t=2$), each $\mathcal{F}_x$ can contribute at most $4$ vertices (pairwise intersecting in $3$ digits) in a colour class containing $A$ as shown in Figure \ref{tree5}. 
  
  \begin{figure}[h]   
  \begin{tikzpicture}[edge from parent/.style={draw, black, thick}, sibling distance=5 cm, level distance=0.7cm, scale=1.5]
      \node (root) {$B_1=abcx$ (WLOG)}
      child{node {$B_2=abdx$ ($|B_1 \cap B_2|=3$)} child{node {$B_3=abex$(WLOG, $B_3 \cap B_2=B_3 \cap B_1$)}}child {node {$B_3=bcdx$ ($B_3 \cap B_2 \neq B_3 \cap B_1=3$)}child {node{$B_4=acdx$}}} };\end{tikzpicture}
      \caption{Possibilities for forming colour classes in $\mathcal{F}_x$ in Lemma \ref{14_6}.}
      \label{tree5}
     \end{figure}
     Let $\mathcal{S}\subset \mathcal{F}$ be a set of vertices in a colour class with $A$. We will prove that $|\mathcal{S}|\leq 8$. \\

     Without loss of generality, let $B_1=abcf\in \mathcal{S}$. Then, besides $B_1$, $\mathcal{F}_f$ can contribute  at most three vertices from: $
      abdf, abef, bcdf, bcef, acdf, acef$ (each intersecting $B_1$ in $3$ digits). Moreover, each $\mathcal{F}_x$, $x\neq f$, can contribute either $abcx$, or at most $3$ vertices: $adex, bdex, cdex$. Now, we examine these possibilities for $B_2\in \mathcal{F}_x$, $x\neq f$.\\ 
      
      \noindent\textbf{Case 1.} 
      If every $\mathcal{F}_x$, $x\in\{g, h, i, j, k\}$, contributes only $abcx$,  then $\mathcal{F}_f$ cannot contribute any additional vertex, since each would intersect $abcx$ in $2$ digits. Hence, $|\mathcal{S}|\leq 1+1+1+1+1+1=6$.  \\
      
      \noindent\textbf{Case 2.} If at least one of $\mathcal{F}_x$, $x\in\{g, h, i, j, k\}$, contributes a vertex $B_2\in\{adex, bdex,$ $ cdex\}$. Without loss of generality, let $B_2=adeg$ from $\mathcal{F}_g$, then each of the remaining  $\mathcal{F}_x$, $x\in\{h, i, j, k\}$ can contribute at most one vertex from $abcx$ and $adex$. Moreover, it leaves only two additional vertices available in  $\mathcal{F}_f$, namely $bcdf$ and $bcef$. Hence, if $\mathcal{F}_g$ contributes only $B_2=adeg$ then $|\mathcal{S}|\leq 3+ 1+ 1 + 1+ 1+ 1=8$. Otherwise, if $\mathcal{F}_g$ contributes more than one vertex to $\mathcal{S}$, then $\mathcal{F}_f$ can contribute only $B_1=abcf$ and $|\mathcal{S}|\leq 1+3+ 1+1 +1 +1=8$. 
  \end{proof}
  
  \begin{lemma}
  \label{14_7} 
      Let $A=abcde$ be a vertex of $3\text{-}QI(11,2)=(V, E)$. Consider a family $\mathcal{F}=\{B\in V: |A \cap B|=4, |B|=4\}$. Then, $|\mathcal{F}|=5$ and these five along with $A$ can form a colour class in $3\text{-}QI(11, 2).$
  \end{lemma}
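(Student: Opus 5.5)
The plan is to count $\mathcal{F}$ directly and then verify that every pair inside $\mathcal{F}\cup\{A\}$ satisfies Condition~\ref{adjacencycondition}. No pair of vertices in the proposed class can then lie together in a hyperedge, so the class is strongly independent and hence a valid colour class.

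\textbf{Counting.} A member $B\in\mathcal{F}$ is a $4$-subset of $[11]$ with $|A\cap B|=4$. Since $|B|=4$, this forces $B\subseteq A$. So $\mathcal{F}$ is exactly the set of $4$-subsets of $A=abcde$:
\[
\mathcal{F}=\{abcd,\ abce,\ abde,\ acde,\ bcde\},
\]
and therefore $|\mathcal{F}|=\binom{5}{4}=5$. Each such $B$ is a genuine vertex, because $4$-subsets are allowed (part sizes $(4,7)$).

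\textbf{Pairs $(A,B)$ with $B\in\mathcal{F}$.} Since $B\subseteq A$, we have $\overline{A}\cap B=\emptyset$. This intersection has cardinality $0\le 1$, so by Condition~\ref{adjacencycondition} the pair $A$, $B$ cannot belong to a common hyperedge. (Alternatively, $|A\cap\overline{B}|=1$ gives the same conclusion.)

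\textbf{Pairs $(B_i,B_j)$ with $B_i\neq B_j$ in $\mathcal{F}$.} Both are distinct $4$-subsets of the $5$-element set $A$, so Lemma~\ref{condition} with $n=5$ gives $|B_i\cap B_j|=3$. Consequently
\[
|B_i\cap\overline{B_j}|=|B_i|-|B_i\cap B_j|=1 .
\]
(Equally, $\overline{A}\subseteq\overline{B_i}\cap\overline{B_j}$ shows that intersection is large, so the witness must come from $B_i\cap\overline{B_j}$.) By Condition~\ref{adjacencycondition}, no such pair lies in a common hyperedge.

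Since no two of the six vertices $A,B_1,\dots,B_5$ lie in a common hyperedge, the set is strongly independent. It can therefore be used as one colour class, which proves the lemma.

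\textbf{Main obstacle.} There is essentially none. The only step needing care is noting that $|A\cap B|=4$ with $|B|=4$ forces containment, which collapses both the count and the adjacency check to one-line applications of Lemma~\ref{condition} and Condition~\ref{adjacencycondition}.
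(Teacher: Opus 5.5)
Your proof is correct and takes the same route as the paper: you list $\mathcal{F}$ as the five $4$-subsets of $A$, use Lemma~\ref{condition} to get pairwise intersections of size $3$, and conclude via Condition~\ref{adjacencycondition}. You also check the pairs $(A,B)$ explicitly via $\overline{A}\cap B=\emptyset$, a step the paper leaves implicit.
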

  \begin{proof}
      $\mathcal{F}=\{abcd,
	abce,
	abde,
	acde,
	bcde\}$. By Lemma \ref{condition}, any two sets intersect in three digits because each $4$-subset is chosen from the $5$-element set $abcde$. Hence, by Condition \ref{adjacencycondition}, all sets in $\mathcal{F}$ together with $A$ form a valid colour class. 
  \end{proof}
   
   \begin{lemma}
      \label{14_8}
       Let $A=abcd$ be a vertex of $3\text{-}QI(11,2)=(V, E)$. Consider a family $\mathcal{F}=\{B\in V: |A \cap B|=0, |B|=4\}$. Then, at most five from $\mathcal{F}$ along with $A$ can form a colour class in $3\text{-}QI(11, 2).$
  \end{lemma}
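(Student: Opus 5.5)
The plan is to reduce the colour-class condition inside $\mathcal{F}$ to a condition on the size of pairwise intersections, pass to complements in $\overline{A}$, and bound the resulting set system directly. First, the pairs involving $A$ cause no trouble: every $B\in\mathcal{F}$ satisfies $A\cap B=\emptyset$, so by Condition~\ref{adjacencycondition} each $B$ may lie in a colour class with $A$. The only constraint is between pairs $B_1,B_2\in\mathcal{F}$, which are $4$-subsets of the $7$-set $\overline{A}=efghijk$.

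\textbf{Reducing the condition.} The set $\overline{B_1}\cap\overline{B_2}\subseteq[11]$ contains $A$, so it has at least $4$ elements. We also have $|B_1\cap\overline{B_2}|=|\overline{B_1}\cap B_2|=4-|B_1\cap B_2|$. Two $4$-subsets of a $7$-set meet in at least one element. Hence, by Condition~\ref{adjacencycondition}, $B_1,B_2$ may share a colour if and only if $|B_1\cap B_2|\in\{1,3\}$.

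\textbf{Passing to complements.} Write $C_i=\overline{A}\setminus B_i$, a $3$-subset of $\overline{A}$. Then $|C_1\cap C_2|=7-|B_1\cup B_2|=|B_1\cap B_2|-1$. So the problem becomes: find the largest family of $3$-subsets of a $7$-set whose pairwise intersections all lie in $\{0,2\}$.

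\textbf{Key structural step.} Define $C_1\sim C_2$ when $|C_1\cap C_2|=2$ (or $C_1=C_2$). This relation is transitive on such a family. Indeed, if $C_1\sim C_2$ and $C_2\sim C_3$, then $C_1$ and $C_3$ each contain $2$ of the $3$ elements of $C_2$, so $|C_1\cap C_3|\ge1$, and therefore $|C_1\cap C_3|=2$. So the family splits into equivalence classes, and sets in different classes are disjoint; in particular, the unions of distinct classes are pairwise disjoint. Within one class, pairwise $2$-intersecting $3$-sets form one of two configurations:
\begin{itemize}
\item a sunflower on a common pair, with $s$ sets and union of size $s+2$;
\item the $3$-subsets of a $4$-set, with at most $4$ sets and union of size $4$.
\end{itemize}
In both cases a class with union of size $u\ge3$ has at most $\max(u-2,\,4\cdot[u=4])$ members; concretely, $u=3$ gives $1$, $u=4$ gives at most $4$, and $u\ge5$ gives at most $u-2$.

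\textbf{Counting.} The unions of the classes are disjoint subsets of a $7$-set, each of size at least $3$, so there are at most two classes.
\begin{itemize}
\item One class: at most $7-2=5$ sets (a sunflower on a fixed pair).
\item Two classes: the union sizes are $(3,3)$ or $(3,4)$, giving at most $1+1$ or $1+4=5$ sets.
\end{itemize}
Hence $|\mathcal{S}|\le5$. The bound is attained: for example $B=efgh,\,efgi,\,efgj,\,efgk$ together with $hijk$ is a family of five sets with pairwise intersections in $\{1,3\}$.

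The main obstacle is the structural classification step: we must verify transitivity and justify that a pairwise $2$-intersecting family of $3$-sets is either a sunflower on a pair or lies inside a $4$-set. This is the standard argument: if three sets do not share a common pair, they all lie in the union of two of them, which has size $4$. Everything else is routine counting.
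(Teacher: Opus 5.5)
Your proof is correct, and it takes a genuinely different route from the paper. The paper fixes $B_1=efgh$ without loss of generality and lists in a $4\times 4$ array the sixteen members of $\mathcal{F}$ meeting $B_1$ in $1$ or $3$ points. It then checks entry by entry that two of them are compatible exactly when they share a row or a column, so at most one line of four can be added to $B_1$. You share only the first reduction with it: compatibility within $\mathcal{F}$ means $|B_1\cap B_2|\in\{1,3\}$. After that you complement inside $\overline{A}$, so the question becomes how many $3$-subsets of a $7$-set can have pairwise intersections in $\{0,2\}$. You then use that meeting in two points is an equivalence relation (because $2+2>3$) whose classes have pairwise disjoint supports, so a $7$-set accommodates at most two classes.

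The paper's argument is a short finite check tied to these parameters. It leaves implicit the easy fact that entries pairwise sharing a row or column lie in a single line. Yours needs no WLOG choice or table, and it explains why the answer is $5$. It also describes the extremal families structurally. One is the five $4$-subsets of a $5$-subset of $\overline{A}$, whose complements form a sunflower on a pair. The other is a $4$-set $U$ together with the four sets $T\cup\{u\}$, $u\in U$, where $\overline{A}$ is split into a $3$-set $T$ and the $4$-set $U$.

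The only line worth adding is in your classification step. Once three members of a class lie in a $4$-set $U$ without a common pair, every further member also lies in $U$. Otherwise it would have to contain a pair common to three distinct $3$-subsets of $U$, and no such pair exists.
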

 
  \begin{proof}
  The vertices in $\mathcal{F}$ are $4$-subsets of the $7$-set $\overline{A}=efghijk$, so $|\mathcal{F}|=\binom{7}{4}=35$. Without loss of generality, let $B_1=efgh$. We consider vertices in $\mathcal{F}$ that can belong to a colour class $\mathcal{S}$ containing $A$ and $B_1$. Each such vertex must intersect $B_1$ in either $1$ or $3$ digits. These vertices are arranged in Figure \ref{Table14_8}, where the first three columns correspond to intersection size $3$ with $B_1$, and the last column corresponds to intersection size $1$.
  \begin{figure}[h]
  \centering
          \begin{tabular}{|c|c|c|c|}
          \hline
              $efgi$ & $efgj$ & $efgk$ & $hijk$ \\
              \hline
               $eghi$ & $eghj$ & $eghk$ & $fijk$\\
               \hline
               $fghi$ & $fghj$ & $fghk$ & $eijk$\\
               \hline
               $efhi$ & $ efhj$ & $efhk$ & $gijk$\\
               \hline
          \end{tabular}
          \caption{Vertex choices to be in a colour class with $A=abcd$ and $B_1=efgh$ in Lemma \ref{14_8}.}
          \label{Table14_8}
      \end{figure}
     By Lemma~\ref{condition}, any two vertices within a column of Figure~\ref{Table14_8}, intersect in $3$ digits. Similarly, in each row, the last vertex intersects the others in $1$ digit, while the remaining vertices intersect pairwise in $3$ digits. Hence, vertices within the same column or the same row can belong to $\mathcal{S}$. However, any pair of vertices from different rows and columns intersect in $2$ digits and thus cannot be in the same colour class. Therefore, Figure~\ref{Table14_8} contributes at most four vertices, and hence, $|\mathcal{S}|\leq 1+4=5$.
  \end{proof}
  
  \begin{lemma}
      Let $A=abcd$ be a vertex of $3\text{-}QI(11,2)=(V, E)$. Consider a family $\mathcal{F}_x=\{B\in V: A \cap B=\{x\}, |B|=4\}$ for $x\in{a,b,c,d}$. Then, at most five from $\mathcal{F}_x$ along with $A$ can form a colour class in $3\text{-}QI(11, 2).$
      \label{13_familywise}
  \end{lemma}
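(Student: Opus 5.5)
We reduce the statement to an extremal problem on $3$-subsets of the $7$-set $\overline{A}=efghijk$. Every $B\in\mathcal{F}_x$ has the form $B=\{x\}\cup T$ with $T$ a $3$-subset of $\overline{A}$. Since $|A\cap B|=1$, Condition~\ref{adjacencycondition} is automatically satisfied between $A$ and every member of $\mathcal{F}_x$, so only pairs inside $\mathcal{F}_x$ matter.

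For $B_1=\{x\}\cup T_1$ and $B_2=\{x\}\cup T_2$ in $\mathcal{F}_x$, put $s=|T_1\cap T_2|\in\{0,1,2\}$. The four intersections in Condition~\ref{adjacencycondition} then have sizes
\[
|B_1\cap B_2|=1+s,\qquad |B_1\cap\overline{B_2}|=|\overline{B_1}\cap B_2|=3-s,\qquad |\overline{B_1}\cap\overline{B_2}|=3+s\geq 3.
\]
Hence $B_1,B_2$ may share a colour if and only if $s\in\{0,2\}$; they are forbidden exactly when $s=1$. So it suffices to show the following claim: any family $\mathcal{T}$ of $3$-subsets of a $7$-set with pairwise intersections in $\{0,2\}$ has $|\mathcal{T}|\le 5$.

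To prove the claim, we first show that the relation ``$T_1=T_2$ or $|T_1\cap T_2|=2$'' is an equivalence relation on $\mathcal{T}$. Suppose $|T_1\cap T_2|=|T_2\cap T_3|=2$. The two $2$-subsets of $T_2$ share an element, so $T_1\cap T_3\neq\emptyset$, which forces $|T_1\cap T_3|=2$. Members of different classes are disjoint, so the unions of the classes are pairwise disjoint subsets of $\overline{A}$.

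Next we classify a single class, which is a family of $3$-sets meeting pairwise in exactly $2$ elements. The standard dichotomy is that such a family is either a sunflower with a common $2$-element kernel or is contained in a $4$-set. In the sunflower case, $m$ sets use $m+2$ elements (this includes a single set, $m=1$). In the $4$-set case, at most $4$ sets use $4$ elements. The one step needing care is this dichotomy. If $T_1,T_2,T_3$ are in one class and do not share a common pair, we check directly that $T_3\subseteq T_1\cup T_2$, and then every further set is forced into that $4$-set as well.

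Finally we count. Let a class of sunflower type have $m$ members and span $m+2$ elements, and a class of $4$-set type have at most $4$ members and span $4$ elements. Summing over classes and using disjointness inside the $7$-set gives $|\mathcal{T}|\le 7-2p$, where $p$ is the number of sunflower-type classes. Also, at most one $4$-set-type class fits, since two would need $8$ elements. If $p=0$ this gives $|\mathcal{T}|\le 4$; if $p\ge1$ it gives $|\mathcal{T}|\le 5$.

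The bound is attained, for example by $\{x\}\cup\{e,f,y\}$ for $y\in\{g,h,i,j,k\}$, or by the four $3$-subsets of $efgh$ together with $ijk$. In the style of the previous lemmas this can be displayed as a tree, as in Figures~\ref{tree3} and~\ref{tree5}, but the equivalence-class argument above is the core of the proof.
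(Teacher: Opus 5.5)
Your proof is correct, and it reaches the bound by a different route from the paper.

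\textbf{The paper's route.} The paper fixes $B_1=aefg$ and lists the sixteen members of $\mathcal{F}_a$ that meet $efg$ in $0$ or $2$ elements as a $4\times 4$ array. Three columns are indexed by the $2$-subsets of $efg$, a fourth column holds $3$-subsets of $hijk$, and the rows are indexed by $h,i,j,k$. It then observes that two candidates are compatible exactly when they share a row or a column. A clique in this rook's graph lies in one row or one column, so at most four candidates survive, giving five with $B_1$.

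\textbf{Your route.} You pass to $3$-subsets of $\overline{A}$ with pairwise intersections in $\{0,2\}$ and argue globally:
\begin{itemize}
\item meeting in two elements is an equivalence relation;
\item distinct classes have disjoint supports;
\item each class is a sunflower with a $2$-element kernel or sits inside a $4$-set;
\item counting supports in the $7$-set gives the bound.
\end{itemize}

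\textbf{Comparison.} The paper's argument is shorter for this particular instance and fits the array-and-tree style of the surrounding lemmas. Yours needs no choice of $B_1$. It also identifies all extremal families: a $5$-petal sunflower, or a full $4$-set class plus a disjoint triple. These are exactly the columns and rows of the paper's array. Finally, the same support-counting argument applies on a ground set of any size.

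\textbf{Two small points.}
\begin{itemize}
\item You only assert that a fourth member $T_4$ of a non-sunflower class also lies in $T_1\cup T_2$. This follows from your three-set step applied to $T_1,T_2,T_4$ and to $T_1,T_3,T_4$, noting that $T_1\cup T_3=T_1\cup T_2$. If $T_4\not\subseteq T_1\cup T_2$, then $T_4$ contains both $T_1\cap T_2$ and $T_1\cap T_3$. These are two distinct pairs inside $T_1$, so $T_4=T_1$, a contradiction.
\item You wrote $|\overline{B_1}\cap\overline{B_2}|=3+s$; in fact $|\overline{B_1}\cap\overline{B_2}|=11-|B_1\cup B_2|=4+s$. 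This is harmless, since only the bound $\geq 2$ is used.
\end{itemize}
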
 

  \begin{proof} Let $B_1 = aefg \in \mathcal{F}_a$. We determine the maximum number of vertices in $\mathcal{F}_a$ that can belong to the same colour class as $B_1$. By Condition~\ref{adjacencycondition}, such vertices must intersect $efg$ in either $0$ or $2$ digits. The possibilities are summarized in the array shown in Figure~\ref{Table12_new}, where each of the first three columns corresponds to a choice of a $2$-subset of $efg$, while each row (excluding the last column) corresponds to an element of $\overline{A \cup B_1}$. The final column lists the $3$-subsets disjoint from $efg$, obtained as complements of the corresponding entries in that row.
  \begin{figure}[h] 
  \centering
   \begin{tabular}{|c|c|c|c|}
  \hline
      $aefh$ & $afgh$ & $aegh$ & $aijk$  \\
      \hline
       $aefi$ & $afgi$ & $aegi$ & $ahjk$\\
       \hline
       $aefj$ & $afgj$ & $aegj$ & $ahik$  \\
       \hline
       $aefk$ & $afgk$ & $aegk$ & $ahij$\\
       \hline       
  \end{tabular}
  \caption{Vertex choices to be in a colour class with $A=abcd$ and $B_1=aefg$ in Lemma~\ref{13_familywise}.}
  \label{Table12_new}
  \end{figure}
  By Lemma~\ref{condition} and Condition~\ref{adjacencycondition}, vertices within the same row or column can form a colour class, whereas any pair of vertices from different rows and columns cannot be in the same colour class. Hence, at most five vertices from $\mathcal{F}_x$, together with $A$, can form a colour class in $3\text{-}QI(11,2)$.\end{proof}

    \begin{lemma}
      \label{12_2}
       Let $A = abcd$ be a vertex of $3\text{-}QI(11,2) = (V,E)$, and let $\mathcal{F} = \{ B \in V : |A \cap B| = 1,\ |B| = 4 \}$. If a colour class in $3\text{-}QI(11,2)$ containing $A$ includes two vertices $B_1, B_2 \in \mathcal{F}$ such that $|B_1 \cap B_2| = 1$ and $A \cap B_1 = A \cap B_2$, then it can have at most $12$ vertices from $\mathcal{F}$ (including  $B_1$ and $B_2$).
      
  \end{lemma}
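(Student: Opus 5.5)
The plan is to fix coordinates, translate Condition~\ref{adjacencycondition} into a statement about the triples $B\setminus A$, and then do a small structured case analysis in the spirit of Lemmas~\ref{14_5} and~\ref{14_6}.

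\textbf{The intersection rule.} For two $4$-subsets $B,C$ of $[11]$ with $|B\cap C|=s$, the four cells have sizes $s,\,4-s,\,4-s,\,3+s$. So by Condition~\ref{adjacencycondition}, $B$ and $C$ may share a colour class iff $s\in\{0,1,3\}$; only $s=2$ is forbidden.

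\textbf{Normalisation.} Write $\overline{A}=efghijk$. Without loss of generality take $B_1=aefg$ and $B_2=ahij$, so $k$ is the unique element of $\overline{A}$ outside $B_1\cup B_2$. Let $\mathcal{F}_x=\{B\in\mathcal{F}: A\cap B=\{x\}\}$ and write each $B\in\mathcal{F}_x$ as $x\cup T$ with $T$ a $3$-subset of $\overline{A}$.

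\textbf{Step 1: the contribution of $\mathcal{F}_a$.} By Lemma~\ref{13_familywise} and its array, any colour class containing $B_1$ and $B_2$ uses vertices of $\mathcal{F}_a$ only from the row of $B_2$. In the array of Lemma~\ref{13_familywise} (with $hij$ playing the role of that row's last entry), this row is $\{aefk,afgk,aegk,ahij\}$. Hence $\mathcal{F}_a\cap\mathcal{S}\subseteq\{B_1,B_2,aefk,afgk,aegk\}$, so $\mathcal{F}_a$ contributes at most $5$.

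\textbf{Step 2: the candidates in $\mathcal{F}_x$, $x\ne a$.} Here $|(x\cup T)\cap B_1|=|T\cap efg|$ and $|(x\cup T)\cap B_2|=|T\cap hij|$, and both must avoid $2$. The surviving triples are:
\begin{itemize}
\item $efg$ and $hij$;
\item the nine ``grid'' triples $\{p,q,k\}$ with $p\in efg$ and $q\in hij$, viewed as a $3\times3$ grid with rows indexed by $p$ and columns by $q$.
\end{itemize}

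\emph{Within one family.} Two members share $x$, so their triples must meet in $0$ or $2$ elements. Distinct grid triples must therefore share a row or a column, hence lie on one line of the grid. Also $efg$ and $hij$ each meet every grid triple in exactly $1$, so they cannot coexist with grid triples. Consequently each $\mathcal{F}_x$ contributes at most $3$: a line of the grid, or a subset of $\{efg,hij\}$.

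\emph{Across families.} The triples must meet in $1$ or $3$. Two grid triples $\{p,q,k\},\{p',q',k\}$ meet in $1+[p=p']+[q=q']$, so they must be equal or differ in both coordinates. The triples $efg$ and $hij$ cannot appear in two different families. Any grid triple is compatible with both of them.

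\emph{Against the extra members of $\mathcal{F}_a$.} These are $aefk,afgk,aegk$, i.e.\ $T$-parts $efk,fgk,egk$. Their intersection with a grid triple $\{p,q,k\}$ has size $1+[p\in\{\cdot,\cdot\}]$, which forbids rows meeting those pairs. For example, including $aefk$ kills rows $e$ and $f$ from every other family.

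\textbf{Step 3: counting.} I will split on $m=|\mathcal{F}_a\cap\mathcal{S}|-2\in\{0,1,2,3\}$.
\begin{itemize}
\item If $m\ge 2$, all rows are killed. Each $\mathcal{F}_x$ ($x\ne a$) can then only use $efg$ or $hij$, and each of those can appear in at most one family. This gives at most $5+2=7$.
\item If $m=1$, say $aefk$, only row $g$ survives. Cross-family grid triples must differ in both coordinates, so at most one family can use row $g$. Here I will argue that the family using a full row line contributes $3$ while the others get at most $efg$ or $hij$, giving roughly $3+3+2<12$.
\item If $m=0$, then $|\mathcal{S}\cap\mathcal{F}_a|=2$ and the three families $\mathcal{F}_b,\mathcal{F}_c,\mathcal{F}_d$ carry the weight. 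A family using a line (size $3$) forces every other family's grid triples to avoid that line's row and column entirely, except for equality with one of its points. I will show the families then cannot all reach $3$ beyond a total of $10$. The configuration to check is: one family takes a row, another a column, and they meet in a common point, which is impossible since equal cross triples are allowed but the remaining points differ in only one coordinate. This yields $2+10=12$.
\end{itemize}

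\textbf{Main obstacle.} I expect the $m=0$ subcase to be the delicate part: bounding the total contribution of $\mathcal{F}_b,\mathcal{F}_c,\mathcal{F}_d$ by $10$ using the ``line within a family, both-coordinates-differ or equal across families'' structure. I would attack it by a pigeonhole on which families use lines versus the singletons $efg$ and $hij$, exactly as the tree arguments in Lemma~\ref{14_5} proceed.
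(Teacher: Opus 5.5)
Your split on $m=|\mathcal{F}_a\cap\mathcal{S}|-2$ differs from the paper's case split and can be made to work, but as written several of your intermediate claims are false, and the difficulty is not where you put it.

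\textbf{Errors to fix.}

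\emph{Step 1.} $B_2=ahij$ lies in the last column of the array of Lemma~\ref{13_familywise} as well as in row $k$. So the extra members of $\mathcal{F}_a$ may instead be $aijk, ahjk, ahik$; each meets $B_1$ in $1$ element and $B_2$ in $3$. You need the relabelling $e\leftrightarrow h$, $f\leftrightarrow i$, $g\leftrightarrow j$, which swaps $B_1$ and $B_2$, to reduce to your row.

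\emph{Step 2.} The claim that $efg$ and $hij$ cannot appear in two different families is false. Across families the only restriction is $|T\cap T'|\neq 2$, and $|efg\cap efg|=3$, $|efg\cap hij|=0$. This breaks your $m\ge 2$ count: $\{aefg, ahij, aefk, afgk, aegk, bhij, chij, dhij\}$ is a valid set of size $8>7$. In fact each of $aefk, afgk, aegk$ meets $xefg$ in two elements, so only $xhij$ survives in each other family, and the true bound in that case is $8$.

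\emph{Case $m=1$.} The claim ``at most one family can use row $g$'' is false, since identical triples are allowed across families. For example, $bghk, cghk, dghk$ are pairwise compatible and compatible with $B_1, B_2, aefk$.

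\emph{Case $m=0$.} The case you call the main obstacle is immediate. Only $\mathcal{F}_b,\mathcal{F}_c,\mathcal{F}_d$ remain, each contributing at most $3$ by your own Step 2, so the total is at most $2+9=11$.

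\textbf{Repaired argument and comparison with the paper.} With these fixes the proof is short.
\begin{enumerate}
\item[(i)] For $m\le 1$, Step 2 alone gives $2+m+9\le 12$.
\item[(ii)] For $m\ge 2$, any two of $efk, fgk, egk$ cover all three rows of your grid. So no grid triple survives, each other family contributes a subset of $\{efg,hij\}$, and the total is at most $5+6=11$.
\end{enumerate}

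The paper instead keeps the crude count $5$ for $\mathcal{F}_a$ throughout and bounds $\mathcal{F}_b\cup\mathcal{F}_c\cup\mathcal{F}_d$ by $7$ using interactions among grid vertices of different families. If no grid vertex is used, each family gives at most $2$. Once one family contains, say, $behk$, that family gives at most $3$ (a line through $(e,h)$). Every other family is then confined either to $\{yefg,yhij\}$ or to $yehk$ together with the $2\times 2$ block $\{f,g\}\times\{i,j\}$, whose lines have length $2$; this gives $3+2+2$. Your route uses the interaction between the extra members of $\mathcal{F}_a$ and the grid instead. Once corrected, it never needs to analyse how grid vertices in different families constrain one another, so it is the simpler of the two.
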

  \begin{proof}
      Without loss of generality, let $B_1=aefg$ and $B_2=ahij$ be in a colour class $\mathcal{S}$ containing $A$. Let $\mathcal{F}_x=\{B\in \mathcal{F} : A\cap B=\{x\}\}$.  Then by Lemma \ref{13_familywise}, besides $B_1$ and $B_2$, $\mathcal{F}_a$ can contribute at most $3$ vertices in $\mathcal{S}$. While for each $x\in\{b, c, d\}$, $\mathcal{F}_x$ contributes, either at most $2$ from $xefg$ (intersecting $B_1$ in $3$ and $B_2$ in $1$ digit) and $xhij$ (intersecting $B_1$ in $1$ and $B_2$ in $3$ digits), or at most $3$ from:  
      \[xehk, xeik, xejk, xfhk, xfik, xfjk, xghk, xgik, xgjk\]
      (each intersecting $B_1$ and $B_2$ in $1$ digit). \\
      
      \noindent \textbf{Case 1.} If none of the $\mathcal{F}_x, x\in\{b, c, d\}$ contributes a vertex that intersects $B_1$ and $B_2$ in $1$ digit each, then each $\mathcal{F}_x$ contributes at most $2$ vertices. Hence, $|\mathcal{S}|\leq 5\, (\mathcal{F}_a)+ 2\, (\mathcal{F}_b)+ 2\, (\mathcal{F}_c)+ 2\, (\mathcal{F}_d) =11$. \\
      
      \noindent \textbf{Case 2.} If at least one $\mathcal{F}_x$ contributes a vertex $B_3$ that intersects $B_1$ and $B_2$ in $1$ digit each. Assume without loss of generality that $x=b$ and $B_3=behk$. Then, besides $B_3$,  $\mathcal{F}_b$ can now contribute at most $2$ vertices from: $beik, bejk, bfhk, bghk$ (each intersecting $B_3$ in $3$ digits). Moreover, this also reduces the choices in each $\mathcal{F}_y$, $y\neq b$, and now each $\mathcal{F}_y$ can contribute at most $2$ from either $yefg$ and $yhij$, or from $yehk$ (intersecting $B_3$ in $3$ digits), $yfik, yfjk, ygik, ygjk$ (each intersecting $B_3$ in $1$ digit). Therefore, $|\mathcal{S}|\leq 5\,(\mathcal{F}_a) + 3\, (\mathcal{F}_b) + 2\, (\mathcal{F}_c)+ 2\, (\mathcal{F}_d)=12$.     
  \end{proof}
  
\begin{lemma}
      \label{12_1}
        Let $A = abcd$ be a vertex of $3\text{-}QI(11,2) = (V,E)$, and let $\mathcal{F} = \{ B \in V : |A \cap B| = 1,\ |B| = 4 \}$. If a colour class in $3\text{-}QI(11,2)$ containing $A$ includes two vertices $B_1, B_2 \in \mathcal{F}$ such that $|B_1 \cap B_2| = 3$ and $A \cap B_1 = A \cap B_2$, then it can have at most $13$ vertices from $\mathcal{F}$ (including  $B_1$ and $B_2$).
  \end{lemma}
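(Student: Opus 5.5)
Without loss of generality, we take $B_1=aefg$ and $B_2=aefh$, so that $B_1\cap B_2=aef$. Let $\mathcal{S}$ be the set of vertices of $\mathcal{F}$ in the colour class, and write $\mathcal{F}_x=\{B\in\mathcal{F}: A\cap B=\{x\}\}$ for $x\in\{a,b,c,d\}$. By Lemma~\ref{13_familywise}, $\mathcal{F}_a$ contributes at most five vertices. The plan is to bound the contribution of each $\mathcal{F}_x$ with $x\in\{b,c,d\}$ by using Condition~\ref{adjacencycondition} against both $B_1$ and $B_2$. For two $4$-subsets of $[11]$, the condition says their intersection has size $0$, $1$ or $3$, never $2$. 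For $C=xpqr$ with $pqr\subseteq\overline{A}=efghijk$, we have $|C\cap B_1|=|pqr\cap efg|$, and this must lie in $\{0,2\}$; similarly $|pqr\cap efh|\in\{0,2\}$.

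The first step is to enumerate the $3$-subsets $pqr$ of $efghijk$ satisfying both constraints. They fall into three groups.
\begin{itemize}
\item Sets avoiding $efgh$: $pqr=ijk$.
\item Sets containing exactly $ef$ from $efgh$, with third element in $\{i,j,k\}$: $efi, efj, efk$.
\item Sets containing exactly $gh$ from $efgh$, with third element in $\{i,j,k\}$: $ghi, ghj, ghk$.
\end{itemize}
Combinations such as $eg\ast$ are excluded, since they meet $efh$ in a single element. So for $x\ne a$ the candidates are the seven vertices $xijk$, $xef\ast$ and $xgh\ast$. Within $\mathcal{F}_x$, compatibility again requires intersections of size $0$ or $2$ on the three elements outside $x$. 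The sets $efi, efj$ meet in $2$ elements, as do $ghi, ghj$. On the other hand, $efi$ and $ghj$ meet in $0$, while $efi$ and $ghi$ meet in $1$, which is forbidden. The set $ijk$ meets $efi$ in $1$, so it is also forbidden alongside any $ef\ast$ or $gh\ast$ set.

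The resulting compatibility structure is the $2\times 3$ rook array with rows $ef\ast,gh\ast$ and columns $i,j,k$, where two entries are compatible exactly when they share a row or lie in different rows and different columns, plus the isolated vertex $xijk$. A maximal compatible set is then either one whole row (three sets), or one set from one row together with the sets of the other row in the two remaining columns (also three), or $\{xijk\}$ alone. Hence each $\mathcal{F}_x$ contributes at most $3$, and the naive bound is $5+3\cdot 3=14$.

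The main work, and the obstacle, is to shave this to $13$ by exploiting interactions across families and with $\mathcal{F}_a$. I would first list the admissible members of $\mathcal{F}_a$ through the array of Figure~\ref{Table12_new} applied to $B_1$, and intersect with the compatibility constraints from $B_2$. The candidates in $\mathcal{F}_a$ are those $a$-sets $apqr$ with $pqr$ in the same seven-set list: $aijk$, $aefi$, $aefj$, $aefk$, $aghi$, $aghj$, $aghk$. A maximum contribution of five from $\mathcal{F}_a$ forces a specific pattern, for instance $B_1,B_2,aefi,aefj,aefk$, which is the row $ef\ast$ filled out.

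Across families $x\ne y$, a vertex $xpqr$ and a vertex $ypqr'$ must satisfy $|pqr\cap p'q'r'|\in\{1,3\}$, since $x\ne y$. This inverts the parity: for example, $xefi$ is compatible with $yefj$ (intersection $2$ plus nothing, so $2$, forbidden). This observation is exactly what will make full rows in different families incompatible. I would then do a case analysis on how many families achieve $3$ and how full $\mathcal{F}_a$ is, showing that three of the families cannot simultaneously achieve their maxima together with five in $\mathcal{F}_a$. In each branch I would fix a witness vertex, as in the proofs of Lemmas~\ref{12_2} and~\ref{14_5}, and recount until the total is at most $13$. I expect the bookkeeping of these cross-family constraints to be the delicate step.
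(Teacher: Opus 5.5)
There is a genuine gap, and it starts in your first step. For $x\neq a$ and $C=xpqr$ you correctly note that $|C\cap B_1|=|pqr\cap efg|$. However, Condition~\ref{adjacencycondition} then requires this number to lie in $\{0,1,3\}$, not in $\{0,2\}$. The ``overlap in $\{0,2\}$'' rule applies only when $C$ and $B_1$ share their digit from $A$, i.e.\ inside $\mathcal{F}_a$. Your list is not even consistent with your own criterion, since $ghi$ meets $efg$ in one element.

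As a result, your candidate set for $\mathcal{F}_x$ is wrong in both directions:
\begin{itemize}
\item $xefi$ meets $B_1=aefg$ in $\{e,f\}$, so it is forbidden.
\item $xgij$, $xhik$ and $xeij$ are admissible, yet you exclude them.
\end{itemize}
The correct admissible set has sixteen members: $x$ together with any $3$-subset of $\{g,h,i,j,k\}$, or $x$ together with one of $e,f$ and two of $i,j,k$. This is exactly $xijk$ plus the array of Figure~\ref{fig13case2}. Your $\mathcal{F}_a$ list has the same defect: $aghi$ meets $B_1$ in two digits and is forbidden, whereas $aegh$ and $afgh$ are admissible. Likewise, across different families the overlap may be $0$ as well as $1$ or $3$, not only $1$ or $3$ as you wrote.

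Consequently your claim that each $\mathcal{F}_x$ contributes at most $3$ is false. The vertices $bijk, beik, bfik, bgik, bhik$ all contain $bik$, so they are pairwise compatible. Each of them meets $A$ in $\{b\}$ and meets $B_1$ and $B_2$ in at most one digit. So $\mathcal{F}_b$ alone can contribute $5$. The correct starting bound from Lemma~\ref{13_familywise} is therefore $5\cdot 4=20$, not $14$, and your ``shave $14$ to $13$'' plan aims at the wrong target. Beyond that, your last paragraph is only a plan; the reduction to $13$ is never actually argued.

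The missing structure is what the paper supplies:
\begin{itemize}
\item First, apply Lemma~\ref{12_2} whenever some family contains two members meeting only in their $A$-digit, which gives at most $12$.
\item Hence members of each family pairwise share three digits. The extra members of $\mathcal{F}_a$ then come either from $\{aefi,aefj,aefk\}$ or from $\{aegh,afgh\}$.
\item Next, split on whether some $xijk$ with $x\neq a$ is present. If $bijk\in\mathcal{S}$, then $\mathcal{F}_c$ and $\mathcal{F}_d$ shrink to $xijk$ alone or to a subset of $\{xghi,xghj,xghk\}$. This gives totals of at most $5+1+4$ or $5+5+3$.
\item Otherwise, each $\mathcal{F}_x$ with $x\ne a$ contributes at most $4$ vertices from Figure~\ref{fig13case2}. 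A case analysis on how a chosen $B_3\in\mathcal{F}_b$ meets $B_1$ and $B_2$ then caps $\mathcal{F}_c\cup\mathcal{F}_d$ at $4$, for a total of $5+4+4=13$.
\end{itemize}
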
  
\begin{proof}

Without loss of generality, let $B_1=aefg$ and $B_2=aefh$ be vertices in a colour class $\mathcal{S}$ containing $A$. Let $\mathcal{F}_x=\{B\in \mathcal{F} : A\cap B=\{x\}\}, x \in \{a,b,c,d\}$. Then by Lemma \ref{13_familywise}, besides $B_1$ and $B_2$, $\mathcal{F}_a$ can contribute at most $3$ vertices in $\mathcal{S}$. 
If $\mathcal{S}$ contains two vertices from some $\mathcal{F}_x$ whose intersection is exactly one digit (namely $x$), then the result follows from Lemma~\ref{12_2}. Therefore, for the remainder of the proof, we assume that no $\mathcal{F}_x$ contributes such a pair of vertices. Consequently, every pair of vertices in $\mathcal{F}_x \cap \mathcal{S}, x\in \{a, b, c, d\}$, intersects in exactly three digits. With the given choice of $B_1$ and $B_2$, the family $\mathcal{F}_a$ can contribute at most three additional vertices to $\mathcal{S}$, chosen either from $aefi, aefj, aefk$ or from $aegh, afgh$. Based on whether there exists some $\mathcal{F}_x$, $x \neq a$, that contributes a vertex $xijk$ (disjoint from both $B_1$ and $B_2$) to $\mathcal{S}$, we consider the following two cases. \\

\noindent \textbf{Case 1.}  If for some $x\neq a$, $\mathcal{F}_x$ contributes the vertex $xijk$, then we assume without loss of generality that $x=b$ and $B_3=bijk\in \mathcal{S}$. Figure \ref{fig13case} shows the remaining valid options in $\mathcal{F}_b$, each intersecting $B_3$ in $3$ digits.
\begin{figure}[h]
\centering
    \begin{tabular}{|c|c|c|c|} \hline $beik$ & $bfik$ & $bgik$ & $bhik$ \\ \hline $bejk$ & $bfjk$ & $bgjk$ & $bhjk$ \\ \hline $beij$ & $bfij$ & $bgij$ & $bhij$ \\ \hline \end{tabular}
    \caption{Vertex choices to be in a colour class with $A=abcd$, $B_1=aefg$, $B_2= aefh$ and $B_3=bijk$ in Lemma~\ref{12_1}.}
    \label{fig13case}
    \end{figure}
With the given choices of $A, B_1, B_2,$ and $B_3$, each $\mathcal{F}_x$, for $x \in \{c,d\}$, can contribute either the vertex $xijk$ or at most three vertices from $xghi, xghj, xghk$. If at least one of $cijk$ and $dijk$ is included in $\mathcal{S}$, then by Condition~\ref{adjacencycondition}, $\mathcal{S}$ cannot contain any vertices from Figure~\ref{fig13case}. Consequently, $|\mathcal{S}|\leq 5\,(\mathcal{F}_a) + 1\, (\mathcal{F}_b) + 4\, (\mathcal{F}_c \cup \mathcal{F}_d)=10$. On the other hand, if neither $cijk$ nor $dijk$ is included, then $\mathcal{F}_c \cup \mathcal{F}_d$ can contribute at most three vertices - either all from the same $\mathcal{F}_x$, or at most two vertices, one from each of $\mathcal{F}_c$ and $\mathcal{F}_d$. Hence, $|\mathcal{S}|\leq 5\,(\mathcal{F}_a) + 5\, (\mathcal{F}_b) + 3\, (\mathcal{F}_c \cup \mathcal{F}_d)=13$.\\

\noindent \textbf{Case 2.} If, for all $x$, no $\mathcal{F}_x$ contributes the vertex $xijk$, then $\mathcal{F}_b$ contributes at most four vertices from Figure~\ref{fig13case2}. 
\begin{figure}[h]
        \centering
        \begin{tabular}{|c|c|c|c||c|}
        \hline
            $beij$ & $bfij$ & $bgij$ & $bhij$ & $bghi$ \\
            \hline
            $beik$ & $bfik$ & $bgik$ & $bhik$ & $bghj$\\
            \hline
            $bejk$ & $bfjk$ & $bgjk$ & $bhjk$ & $bghk$\\
            \hline
        \end{tabular}
        \caption{Vertex choices to be in a colour class with $A=abcd$, $B_1=aefg$ and $B_2= aefh$ in Lemma~\ref{12_1}.}
        \label{fig13case2}
\end{figure}
    By Condition~\ref{adjacencycondition}, vertices within a column of the array in Figure~\ref{fig13case2} can belong to the same colour class. However, in the $3\times 4$ subarray formed by the first four columns, no two vertices from different rows and columns can be in the same colour class. Moreover, at most two vertices from any row or column of this subarray can be included in a colour class together with a vertex from the fifth column. Hence, $\mathcal{F}_b$ contributes at most $4$ vertices in $\mathcal{S}$, each of which intersects $B_1$ and $B_2$ in $0, 1,$ or $3$ digits. The only vertex that is disjoint from both $B_1$ and $B_2$ is $bijk$ and it has been discussed in Case 1. Moreover, there does not exist any vertex in $\mathcal{F}_b$ that intersects both $B_1$ and $B_2$ in $3$ digits. Therefore, based on the nature of intersection with $B_1$ and $B_2$, this leads to the following three cases for a vertex $B_3\in \mathcal{F}_b\cap \mathcal{S}$. 
    \begin{enumerate}
        \item[(i)] If $|B_3\cap B_1|=0$ and $|B_3\cap B_2|=1$, then without loss of generality, we assume $B_3=bhij$. Consequently, each $\mathcal{F}_x$, $x\in\{c, d\}$ can contribute either $xhij$ or $xghk$, or at most three vertices chosen from the same row or the same column, as shown below. 
        \begin{center}
        \begin{tabular}{ccc}
         $xfik$ & $xgik$ & $xhik$ \\
         $xfjk$ & $xgjk$ & $xhjk$
        \end{tabular}
        \end{center}
    If $xhij$ or $xghk$ is selected, then $|\mathcal{S}|\leq 5\,(\mathcal{F}_a) + 4\, (\mathcal{F}_b) + 4\, (\mathcal{F}_c \cup \mathcal{F}_d)=13$. Any other choice from $\mathcal{F}_x$, reduces the number of choices from $\mathcal{F}_y$, $y\notin \{a,b,x\}$ by at least $1$, in which case $|\mathcal{S}|\leq 5\,(\mathcal{F}_a) + 4\, (\mathcal{F}_b) + 3\, (\mathcal{F}_c \cup \mathcal{F}_d)=12$.
        \item[(ii)] If $|B_3\cap B_1|=|B_3\cap B_2|=1$ and $B_3\cap B_1=B_3\cap B_2$, then without loss of generality, let $B_3=beij$. In this case, each $\mathcal{F}_x$, $x\in\{c, d\}$ can contribute either $xeij$ or $xghi$, or at most three vertices chosen from the same row or the same column, as shown below. 
        \begin{center}
        \begin{tabular}{ccc}
         $xfik$ & $xgik$ & $xhik$ \\
         $xfjk$ & $xgjk$ & $xhjk$
        \end{tabular}
        \end{center}
    If $xeij$ or $xghi$ is selected, then $|\mathcal{S}|\leq 5\,(\mathcal{F}_a) + 4\, (\mathcal{F}_b) + 4\, (\mathcal{F}_c \cup \mathcal{F}_d)=13$. Any other choice from $\mathcal{F}_x$, reduces the number of choices from $\mathcal{F}_y$, $y\notin \{a,b,x\}$ by at least $1$, yielding $|\mathcal{S}|\leq 5\,(\mathcal{F}_a) + 4\, (\mathcal{F}_b) + 3\, (\mathcal{F}_c \cup \mathcal{F}_d)=12$. 
        \item[(iii)] If $|B_3\cap B_1|=|B_3\cap B_2|=1$ and $B_3\cap B_1\neq B_3\cap B_2$, then without loss of generality, let $B_3=bghi$. Consequently, each $\mathcal{F}_x$, $x\in\{c, d\}$ can contribute at most two vertices from the following. 
        \[xghi, xfjk, xfik, xgjk, xhjk\]
    Therefore, $|\mathcal{S}|\leq 5\,(\mathcal{F}_a) + 4\, (\mathcal{F}_b) + 2\, (\mathcal{F}_c) + 2\, (\mathcal{F}_d)=13$.       
    \end{enumerate}

\end{proof}    
 
  \begin{proposition}
      \label{14_9}
       Let $A=abcd$ be a vertex of $3\text{-}QI(11,2)=(V, E)$. Consider a family $\mathcal{F}=\{B\in V: |A \cap B|=1, |B|=4\}$. Then at most thirteen from $\mathcal{F}$ along with $A$ can form a colour class in $3\text{-}QI(11, 2).$
  \end{proposition}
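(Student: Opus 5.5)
Let $\mathcal{S}\subseteq\mathcal{F}$ be the vertices of $\mathcal{F}$ lying in a colour class with $A=abcd$. Write $\mathcal{F}_x=\{B\in\mathcal{F}: A\cap B=\{x\}\}$ for $x\in\{a,b,c,d\}$; each such $B$ is $x$ together with a $3$-subset of $\overline{A}=efghijk$. For $B,B'\in\mathcal{F}_x$ we have $|B\cap B'|\in\{1,2,3\}$. The value $2$ is forbidden, because then both $\overline{B}\cap B'$ and $B\cap\overline{B'}$ are not large enough for Condition~\ref{adjacencycondition} to fail, so the pair would lie in a hyperedge. A quick check shows this: $|B\cap B'|=2$, $|B\cap\overline{B'}|=2$, $|\overline{B}\cap B'|=2$ and $|\overline{B}\cap\overline{B'}|=5$. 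Hence any two members of $\mathcal{S}\cap\mathcal{F}_x$ meet in exactly $1$ or $3$ digits.

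The plan is a case split on the internal structure of the four subfamilies $\mathcal{S}\cap\mathcal{F}_x$.
\begin{enumerate}
\item[(a)] Some $\mathcal{F}_x$ contributes two vertices $B_1,B_2$ with $|B_1\cap B_2|=1$. Then Lemma~\ref{12_2} applies directly and gives $|\mathcal{S}|\le 12$.
\item[(b)] Otherwise, some $\mathcal{F}_x$ contributes two vertices with $|B_1\cap B_2|=3$. Then Lemma~\ref{12_1} applies and gives $|\mathcal{S}|\le 13$.
\item[(c)] Otherwise, every $\mathcal{F}_x$ contributes at most one vertex to $\mathcal{S}$. Then $|\mathcal{S}|\le 4$ trivially.
\end{enumerate}
Since the three cases are exhaustive, the bound $|\mathcal{S}|\le 13$ follows.

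The cases really are exhaustive. If no $\mathcal{F}_x$ contains two members of $\mathcal{S}$, we are in case (c). If some $\mathcal{F}_x$ does contain two members, their intersection is $1$ or $3$, since $2$ is excluded. Those are exactly the hypotheses of Lemma~\ref{12_2} and Lemma~\ref{12_1}, and both lemmas require $A\cap B_1=A\cap B_2$, which holds because the two vertices lie in the same $\mathcal{F}_x$. I would also note that Lemma~\ref{13_familywise} bounds each $|\mathcal{S}\cap\mathcal{F}_x|\le 5$, but it is not needed beyond its use inside the two preceding lemmas.

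The main obstacle is not in this proposition itself, which is a short assembly. The risk is whether Lemma~\ref{12_1}, whose proof depends on an intricate case analysis, is genuinely airtight, since the bound $13$ rests entirely on it. Within the present proof, the only point needing care is the argument excluding intersection size $2$ within a single $\mathcal{F}_x$, which I would write out explicitly as above.
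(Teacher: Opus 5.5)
Your proposal is correct and is essentially the paper's proof. It uses the same trichotomy on $\mathcal{S}\cap\mathcal{F}_x$: at most one vertex per family gives $|\mathcal{S}|\le 4$, a same-family pair meeting in one digit is handled by Lemma~\ref{12_2}, and a same-family pair meeting in three digits is handled by Lemma~\ref{12_1}; your explicit $2,2,2,5$ computation supplies the justification for excluding intersection size $2$ that the paper leaves to Condition~\ref{adjacencycondition}. The sentence introducing that computation is garbled: all four intersections have size at least $2$, so Condition~\ref{adjacencycondition} does not separate the pair and it can lie in a hyperedge. Your numbers, however, are right.
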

 
  \begin{proof}
      Each vertex in $\mathcal{F}$ contains one digit from $A=abcd$ and three digits from $\overline{A}=efghijk$, so $|\mathcal{F}|=\binom{4}{1}\binom{7}{3}=140$. Let $\mathcal{F}_x=\{B\in V: A\cap B=\{x\}\}$ for $x\in{a,b,c,d}$. If $\mathcal{S}\subseteq \mathcal{F}$ is a colour class with $A=abcd$ and each $\mathcal{F}_x$ contributes at most one vertex to $\mathcal{S}$, then $|\mathcal{S}|\le 4$, proving the result.\\
      
      If at least one $\mathcal{F}_x$ contributes more than one vertex to $\mathcal{S}$, assume without loss of generality that $x=a$ and let $B_1=aefg\in\mathcal{S}$. By Condition~\ref{adjacencycondition}, apart from $B_1$, $\mathcal{S}$ can include only those vertices of $\mathcal{F}_a$ that intersect $B_1$ in either $1$ digit ($a$) or $3$ digits (two digits from $efg$). 
      This leads to the following two cases.\\
      
      \noindent \textbf{Case 1.} If $B_2\in \mathcal{S}\cap \mathcal{F}_a$ and $|B_1\cap B_2|=1$, assume without loss of generality that $B_2=ahij$. Since $A\cap B_1=A\cap B_2={a}$, by Lemma~\ref{12_2}, we obtain $|\mathcal{S}|\le 12$.\\
      \noindent \textbf{Case 2.} Let $B_2\in \mathcal{S}\cap \mathcal{F}_a$ with $|B_1\cap B_2|=3$. Without loss of generality, take $B_2=aefh$. Since $A\cap B_1=A\cap B_2={a}$, by Lemma~\ref{12_1}, we obtain $|\mathcal{S}|\le 13$.    
  \end{proof}

  \begin{lemma}
      \label{14_10}
       Let $A=abcd$ be a vertex of $3\text{-}QI(11,2)=(V, E)$. Consider a family $\mathcal{F}=\{B\in V: |A \cap B|=3, |B|=4\}$. Then at most seven from $\mathcal{F}$ along with $A$ can form a colour class in $3\text{-}QI(11, 2).$
  \end{lemma}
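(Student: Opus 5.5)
The plan is to mirror the proof of Lemma~\ref{14_3}. The key step is to reduce Condition~\ref{adjacencycondition} to a pure intersection-size test, which turns $\mathcal{F}$ into a rook's graph on a grid.

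First, every member of $\mathcal{F}$ may lie in a colour class with $A$. For $B\in\mathcal{F}$ we have $|A\cap B|=3$, hence $|A\cap\overline{B}|=1$, so Condition~\ref{adjacencycondition} holds. Each $B\in\mathcal{F}$ can be written uniquely as $B=T\cup\{x\}$, where $T$ is a $3$-subset of $A=abcd$ and $x\in\overline{A}=efghijk$. This gives $|\mathcal{F}|=\binom{4}{3}\binom{7}{1}=28$. I would lay these out in a $4\times 7$ array, with rows indexed by the four $3$-subsets $T$ of $A$ and columns indexed by the seven digits $x$ of $\overline{A}$, in the style of Figure~\ref{14_1table}.

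Next I would translate Condition~\ref{adjacencycondition} for two distinct $B_1,B_2\in\mathcal{F}$. Both are $4$-sets, so $|B_1\cap\overline{B_2}|=|\overline{B_1}\cap B_2|=4-|B_1\cap B_2|$. Also $|\overline{B_1}\cap\overline{B_2}|\ge 11-8=3$. Hence $B_1,B_2$ may share a colour class if and only if $|B_1\cap B_2|\in\{0,1,3\}$, i.e.\ if and only if $|B_1\cap B_2|\neq 2$. I would then compute the intersection sizes in the array:
\begin{itemize}
\item Two entries in the same row (same $T$, different $x$) meet in $T$, so they intersect in $3$ digits.
\item Two entries in the same column (different $T$, same $x$) meet in $(T\cap T')\cup\{x\}$. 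Since $|T\cap T'|=2$ by Lemma~\ref{condition} (with $n=4$), they intersect in $3$ digits.
\item Two entries in different rows and different columns meet only in $T\cap T'$, so they intersect in exactly $2$ digits. Such a pair violates Condition~\ref{adjacencycondition}.
\end{itemize}

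Finally, let $\mathcal{S}\subseteq\mathcal{F}$ lie in a colour class with $A$. Every pair in $\mathcal{S}$ must then share a row or a column. If $\mathcal{S}$ had two entries in a common row and a third entry outside that row, the third entry would have to share a column with both of the first two. This is impossible, since those two lie in different columns. The symmetric argument applies to columns. Hence $\mathcal{S}$ lies entirely within a single row (at most $7$ entries) or a single column (at most $4$ entries), so $|\mathcal{S}|\le 7$. The bound is attained by any full row, for example $\{abce,abcf,\dots,abck\}$, and these seven sets together with $A$ form a colour class.

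There is no serious obstacle here. The only care needed is in the intersection bookkeeping: confirming that the two complement terms never drop to at most one, so that size $2$ is the sole obstruction, and verifying the "different row and different column gives size $2$" claim via Lemma~\ref{condition}.
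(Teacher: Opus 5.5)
Your proposal is correct and follows essentially the same route as the paper. The paper also arranges $\mathcal{F}$ as a $4\times 7$ array indexed by the $3$-subsets of $A$ and the digits of $\overline{A}$, notes that two vertices can share a colour only if they lie in the same row or contain the same digit of $\overline{A}$, and concludes that a colour class takes either a single row (at most $7$) or one vertex per row (at most $4$). Your version is somewhat more careful in two places: you explicitly check that $|\overline{B_1}\cap\overline{B_2}|\ge 3$, so intersection size $2$ is the only obstruction, and you state cleanly why a set in which every pair shares a row or a column must lie in a single row or a single column.
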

  
  \begin{proof}
       Since every vertex in $\mathcal{F}$ has three digits from $A=abcd$ and one from $\overline{A}=efghijk$, $|\mathcal{F}|=\binom{4}{3}.\binom{7}{1}=28$ and these vertices are shown in Figure~\ref{14_10}, where rows correspond to $3$-subsets of $A=abcd$. 
       \begin{figure}[h]
\centering
\begin{tabular}{|c|c|c|c|c|c|c|}
\hline
$abce$ & $abcf$ & $abcg$ & $abch$ & $abci$ & $abcj$ & $abck$\\
\hline
$abdf$ & $abdg$ & $abdh$ & $abdi$ & $abdj$ & $abdk$ & $abde$\\
\hline
$acdg$ & $acdh$ & $acdi$ & $acdj$ & $acdk$ & $acde$ & $acdf$\\
\hline
$bcdh$ & $bcdi$ & $bcdj$ & $bcdk$ & $bcde$ & $bcdf$ & $bcdg$\\
\hline
\end{tabular}
\caption{The family $\mathcal{F} = \{B \in V: |A \cap B| = 3, |B|=4\}$ in Lemma \ref{14_10}.}
\label{14_101}
\end{figure}
       Since vertices within a row in Figure~\ref{14_101} intersect in $3$ digits, they can form a colour class with $A$. By Lemma~\ref{adjacencycondition}, two vertices from different rows share the same colour only if they contain the same digit from $\overline{A}=efghijk$. Thus, choosing a vertex in one row (e.g., $abce$) invalidates all the vertices in the other rows except one in each (vertices containing $e$). Consequently, selecting more than one vertex in a row eliminates all vertices from the remaining rows. If one vertex from each row is selected, then $\mathcal{F}$ contributes only four vertices.  Hence, at most $7$ vertices from $\mathcal{F}$ can belong to a colour class with $A$, all from the same row.
  \end{proof}
  
  \section{A Lower Bound on $\chi_S(3\text{-}QI(11,2))$ and its Implication for $CAN(3\text{-}QI(11,2),2)$}
  \label{sec4}
  
  The upper bounds on the number of vertices in a colour class from various families $\mathcal{F}$, obtained in the previous section, are used to determine the strong independence number $\alpha_S(3\text{-}QI(11,2))$, and hence yield a lower bound on the strong chromatic number of $3\text{-}QI(11,2)$, which is then used to determine its covering array number.\\

  To determine $\alpha_S(3\text{-}QI(11,2))$, we first show that the maximum size of a colour class in $3\text{-}QI(11,2)$ is 26. Suppose a colour class contains no vertex corresponding to a $5$-subset of $[11]$, and let $A=abcd$ be any vertex in it. Then any other vertex $B$ (a $4$-subset) in the same colour class must, by Condition~\ref{adjacencycondition}, satisfy $|A\cap B|\in \{0, 1, 3\}$. Accordingly, the number of such vertices is at most $5, 13,$ and $7$, respectively, by Lemma~\ref{14_8}, Proposition~\ref{14_9}, and Lemma~\ref{14_10}. Thus, the number of vertices in such a colour class is at most $1 (A=abcd)+5+13+7=26$. \\

Next, we determine the maximum size of a colour class containing at least one vertex corresponding to a $5$-subset of $[11]$. Without loss of generality, let $A=abcde$ be such a vertex. Then, by Condition~\ref{adjacencycondition},  any other vertex $B$ in the same colour class satisfies the following.

\begin{enumerate}
\item[\textbf{Case 1.}] $B$ corresponds to a $5$-subset.
\begin{enumerate}
    \item[(a)] $|A\cap B|=0$. By Lemma~\ref{14_1}, at most $6$ such vertices can be included.
    \item[(b)] $|A\cap B|=1$. By Lemma~\ref{14_2}, at most $5$ such vertices can be included.
    \item[(c)] $|A\cap B|=4$. By Lemma~\ref{14_3}, at most $6$ such vertices can be included.
\end{enumerate}
\item[\textbf{Case 2.}] $B$ corresponds to a $4$-subset. 
\begin{enumerate}
    \item[(a)] $|A\cap B|=0$. By Lemma~\ref{14_4}, at most $5$ such vertices can be included.
    \item[(b)] $|A\cap B|=1$. By Lemma~\ref{14_5}, at most $10$ such vertices can be included.
    \item[(c)] $|A\cap B|=3$. By Lemma~\ref{14_6}, at most $8$ such vertices can be included.
    \item[(d)] $|A\cap B|=4$. By Lemma~\ref{14_7}, at most $5$ such vertices can be included.
\end{enumerate}
\end{enumerate}
    Thus, an initial upper bound on the size of a colour class containing a $5$-subset is $1 (A=abcde)+6+5+6+5+10+8+5=46$. However, these bounds cannot be attained simultaneously. In what follows, we refine this estimate by analysing the mutual constraints arising from the interaction of the above cases. \\

\begin{lemma}
Let $A=abcde$ be a vertex of $3\text{-}QI(11,2)$, and define
\[
\mathcal{F}_1=\{B : |A\cap B|=0,\ |B|=5\}, \quad
\mathcal{F}_2=\{B : |A\cap B|=1,\ |B|=5\}.
\]
Then $|\mathcal{F}_1 \cup \mathcal{F}_2| \le 7$ within any colour class containing $A$. Moreover, equality holds if and only if the seven vertices share a common $4$-subset of $\overline{A}=fghijk$.
\label{comb1bc}
\end{lemma}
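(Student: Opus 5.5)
The plan is to fix a colour class $\mathcal{S}$ containing $A=abcde$ and write $\mathcal{S}_1=\mathcal{S}\cap\mathcal{F}_1$ and $\mathcal{S}_2=\mathcal{S}\cap\mathcal{F}_2$. First describe the two families concretely. Every member of $\mathcal{F}_1$ is of the form $B_p=\overline{A}\setminus\{p\}$ for some $p\in\overline{A}=fghijk$. Every member of $\mathcal{F}_2$ is of the form $C=\{x\}\cup Q$ with $x\in A$ and $Q$ a $4$-subset of $\overline{A}$. The key step is a compatibility criterion between the two families, obtained directly from Condition~\ref{adjacencycondition}.

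\emph{Step 1: when $B_p$ and $C$ can share a colour.} We have $B_p\cap C=Q\setminus\{p\}$, so there are two possibilities.
If $p\notin Q$, then $|B_p\cap C|=4$ and $\overline{B_p}\cap C=\{x\}$. This intersection has one element, so the pair is allowed.
If $p\in Q$, then the four intersections have sizes $|B_p\cap C|=3$, $|B_p\cap\overline{C}|=2$, $|\overline{B_p}\cap C|=|\{x,p\}|=2$ and $|\overline{B_p}\cap\overline{C}|=4$. None of them has at most one element, so the pair is forbidden.
Hence $B_p$ and $C$ can lie together in $\mathcal{S}$ if and only if $Q\subseteq B_p$. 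Equivalently, $p$ is one of the two elements of $\overline{A}\setminus Q$.

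\emph{Step 2: case analysis on $\mathcal{S}_2$, using the structure from Lemma~\ref{14_2}.}
If $\mathcal{S}_2=\emptyset$, then $|\mathcal{S}_1|\le 6$ by Lemma~\ref{14_1}, which is strictly less than $7$.
If all members of $\mathcal{S}_2$ share a common $4$-subset $Q$, then by Step 1 we get $|\mathcal{S}_1|\le 2$. These two members are the two $5$-subsets of $\overline{A}$ containing $Q$. Moreover, the members of $\mathcal{S}_2$ are then $xQ$ for distinct $x\in A$, so $|\mathcal{S}_2|\le 5$. Thus the total is at most $7$.
Otherwise, $\mathcal{S}_2$ contains two members with distinct sets $Q\neq Q'$. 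By the proof of Lemma~\ref{14_2}, this forces $\mathcal{S}_2\subseteq\mathcal{F}_x$ for a single $x$, and hence $|\mathcal{S}_2|\le 5$. Every member of $\mathcal{S}_1$ must then contain $Q\cup Q'$, which has at least $5$ elements. So at most one $B_p$ survives, namely $Q\cup Q'$ itself when that union has exactly $5$ elements. The total is therefore at most $5+1=6<7$.

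\emph{Step 3: equality.} The case analysis shows that $7$ is attained only in the common-$Q$ case, with all five vertices $xQ$ ($x\in A$) present together with both $5$-sets containing $Q$. All seven then contain $Q$. Conversely, the seven vertices $aQ,\dots,eQ$ and the two $5$-subsets of $\overline{A}$ containing $Q$ are pairwise compatible, which gives the ``if'' direction. Pairs within $\mathcal{S}_2$ meet in $Q$, of size $4$, which is allowed. Pairs within $\mathcal{S}_1$ are handled by Lemma~\ref{condition}. Mixed pairs are handled by Step 1.

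The main obstacle is the third case of Step 2. There I must rely precisely on the dichotomy extracted in Lemma~\ref{14_2}: a class either lies in one $\mathcal{F}_x$, or consists of one vertex per $\mathcal{F}_x$ over a common $4$-subset. I must also check that the union $Q\cup Q'$ really has at least $5$ elements, which holds because $Q\neq Q'$.
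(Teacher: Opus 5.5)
Your proof is correct and follows the same route as the paper: Step~1 is the paper's observation that a single vertex of $\mathcal{F}_2$ leaves only the two $5$-subsets of $\overline{A}$ meeting it in four digits, and combining this with the individual bounds $6$ and $5$ gives $2+5=7$. Your third case of Step~2 adds something the paper lacks: two distinct $4$-subsets $Q\neq Q'$ force $\mathcal{S}_2\subseteq\mathcal{F}_x$ and leave at most one vertex of $\mathcal{F}_1$, which justifies the ``only if'' direction of the equality statement that the paper only asserts.
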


 \begin{proof} 
 Individually, the maximum contributions of $\mathcal{F}_1$ and $\mathcal{F}_2$ to a colour class are $6$ and $5$, respectively, by Cases~1(a) and~1(b) in Section~\ref{sec4}. Vertices in $\mathcal{F}_2$ contain exactly four digits from $\overline{A}=fghijk$, while vertices in $\mathcal{F}_1$ are precisely the $5$-subsets of $\overline{A}$. Hence, by the pigeonhole principle, any vertex in $\mathcal{F}_2$ intersects each vertex in $\mathcal{F}_1$ in either three or four digits. Consequently, the inclusion of any vertex from $\mathcal{F}_2$ restricts the admissible vertices from $\mathcal{F}_1$ to at most two, namely those intersecting it in four digits. Thus, if at least one vertex from $\mathcal{F}_2$ is chosen, then at most $2+5=7$ vertices can be included in total. Equality is achieved precisely when the selected vertices share a common $4$-subset of $\overline{A}$; for example,
\[
fghij,\; fghik \in \mathcal{F}_1, \quad
afghi,\; bfghi,\; cfghi,\; dfghi,\; efghi \in \mathcal{F}_2.
\]
\end{proof}


     \begin{lemma} 
     Let $A=abcde$ be a vertex of $3\text{-}QI(11,2)$, and define 
     \[ \mathcal{F}_1=\{B : |A\cap B|=3,\ |B|=4\}, \quad \mathcal{F}_2=\{B : |A\cap B|=4,\ |B|=4\}.\] 
     Then $|\mathcal{F}_1 \cup \mathcal{F}_2| \le 8$ within any colour class containing $A$.
     \label{comb2cd}
     \end{lemma}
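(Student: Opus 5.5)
The plan is to split according to how many vertices $m$ the family $\mathcal{F}_2$ contributes to a colour class $\mathcal{S}$ containing $A$. First I would record when two $4$-subsets $B,C$ of $[11]$ can share a colour. If $|B\cap C|=2$, then $|B\setminus C|=|C\setminus B|=2$ and $|\overline{B}\cap\overline{C}|=5$. No region has at most one element, so by Condition~\ref{adjacencycondition} such a pair is forbidden. Intersection sizes $0$, $1$ and $3$ are allowed. Members of $\mathcal{F}_2$ are the five $4$-subsets $P\subset A$. Members of $\mathcal{F}_1$ have the form $T\cup\{x\}$ with $T$ a $3$-subset of $A$ and $x\in\overline{A}$. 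Then $|P\cap(T\cup\{x\})|=|P\cap T|\in\{2,3\}$, so $T\cup\{x\}$ is compatible with $P$ if and only if $T\subset P$.

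\textbf{Case $m=0$.} Lemma~\ref{14_6} gives at most $8$ vertices directly.

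\textbf{Case $m\ge 3$.} Any three distinct $4$-subsets of the $5$-set $A$ intersect in only $2$ elements. Hence no $3$-subset $T$ lies in all of them, so $\mathcal{F}_1$ contributes nothing. The total is then at most $5$, using Lemma~\ref{14_7}.

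\textbf{Case $m=2$.} Let the chosen sets be $P_1,P_2$. Then $P_1\cap P_2$ is a single $3$-set $T_0$, and every admissible member of $\mathcal{F}_1$ is $T_0\cup\{x\}$. There are at most $6$ such vertices, one for each $x\in\overline{A}$. The total is therefore at most $2+6=8$. I would also note that this is attainable: these six pairwise meet in $3$ digits, and each meets $P_1$ and $P_2$ in $3$ digits.

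\textbf{Case $m=1$.} This is the one needing a little care. Let $P$ be the chosen set; every admissible $\mathcal{F}_1$-vertex is $T\cup\{x\}$ with $T$ one of the four $3$-subsets of $P$. For two such vertices $T\cup\{x\}$ and $T'\cup\{y\}$ with $T\ne T'$, we have $|T\cap T'|=2$. Their intersection therefore has size $2+[x=y]$, so they are compatible only if $x=y$. Consequently the $\mathcal{F}_1$-part is of one of two kinds:
\begin{itemize}
\item all its vertices share one $T$, giving at most $6$ vertices;
\item all share one $x$, giving at most $4$ vertices.
\end{itemize}
A mixture is impossible. If $T\cup\{x\}$ and $T\cup\{y\}$ with $x\ne y$ are both present, a third vertex with a different $T'$ would need to match both $x$ and $y$. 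Hence $|\mathcal{F}_1\cap\mathcal{S}|\le 6$, and the total is at most $7$.

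Combining the cases gives $|\mathcal{F}_1\cup\mathcal{F}_2|\le 8$ within $\mathcal{S}$. The main step to get right is the compatibility criterion $T\subset P$ together with the observation that intersections of several $4$-subsets of $A$ shrink quickly. The only potentially delicate count is the case $m=1$, which the dichotomy "same $T$ or same $x$" settles.
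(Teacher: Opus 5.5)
Your proof is correct, but it is organised differently from the paper's.

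The paper splits according to the $\mathcal{F}_1$-part of the colour class:
\begin{itemize}
\item If all its members share one $3$-subset $T$ of $A$, there are at most six of them, and only the two $4$-subsets of $A$ containing $T$ survive in $\mathcal{F}_2$.
\item If two distinct $3$-subsets $T,T'$ occur, then at most one member of $\mathcal{F}_2$ survives, namely $T\cup T'$ when it is a $4$-set. The paper then just invokes Lemma~\ref{14_6} to say that $\mathcal{F}_1$ contributes at most eight.
\end{itemize}
Read literally, that second case only gives $8+1=9$ when the surviving $\mathcal{F}_2$ vertex is actually present. The paper leaves implicit why the $\mathcal{F}_1$-part must shrink in that situation.

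You instead split on $m=|\mathcal{S}\cap\mathcal{F}_2|$. You reduce everything to the single criterion that $T\cup\{x\}$ is compatible with $P$ if and only if $T\subset P$. Your case $m=1$ is precisely the argument that settles the mixed situation the paper glosses over. Its dichotomy is: a common $T$ (at most $6$) or a common $x$ (at most $4$). It shows that once an $\mathcal{F}_2$ vertex is present and two different $3$-subsets occur, the $\mathcal{F}_1$-part has at most $4$ members.

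The paper's organisation gives a direct view of the two extremal shapes. One is a fixed $3$-subset with its six extensions plus its two $4$-supersets; the other is the pure-$\mathcal{F}_1$ configuration of Lemma~\ref{14_6}. Your organisation is more systematic and closes every case explicitly. Both proofs rely on Lemma~\ref{14_6} for the case with no $\mathcal{F}_2$ vertex.
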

     \begin{proof}
     Each vertex in $\mathcal{F}_1$ contains a $3$-subset of $A=abcde$, while each vertex in $\mathcal{F}_2$ is a $4$-subset of $A$. By Condition~\ref{adjacencycondition}, two vertices in $\mathcal{F}_2$ must intersect in exactly three digits to lie in the same colour class. Similarly, any two vertices in $\mathcal{F}_1$ must intersect in either one or three digits. \\
     
     For a fixed $3$-subset of $A$, there are exactly two vertices in $\mathcal{F}_2$ containing it. If all vertices in $\mathcal{F}_1$ correspond to a fixed $3$-subset of $A$, then $\mathcal{F}_1$ contributes at most six vertices (one for each element of $\overline{A}=fghijk$), and $\mathcal{F}_2$ contributes at most two vertices containing the same $3$-subset. Thus, $|\mathcal{F}_1 \cup \mathcal{F}_2|\le 8$, and equality is achieved, for example, by \[\{abcf, abcg, abch, abci, abcj, abck, abcd, abce\}.\]
     On the other hand, if $\mathcal{F}_1$ contains vertices corresponding to two distinct $3$-subsets of $A$, then these impose certain constraints. Indeed, for any two distinct $3$-subsets of $A$, there exists a $4$-subset of $A$ among the respective two options from $\mathcal{F}_2$, containing exactly one of them. Consequently, the admissible vertices in $\mathcal{F}_2$ are reduced to at most one, and may in fact be eliminated entirely. In this case, $\mathcal{F}_1$ alone can contribute at most eight vertices. For instance, \[ \{abcf, adeg, adeh, adei, adej, adek, bcdf, bcef\}\] 
     is a valid colour class consisting entirely of vertices from $\mathcal{F}_1$. Thus, in all cases, $|\mathcal{F}_1 \cup \mathcal{F}_2|\le 8$.
\end{proof}
     
     Lemma~\ref{comb1bc} shows that Cases~1a and~1b together contribute at most seven vertices to a colour class containing $A$ in $3\text{-}QI(11,2)$, while Lemma~\ref{comb2cd} shows that Cases~2c and~2d contribute at most eight. Thus, the upper bound on the size of a colour class containing $A$ reduces to 
     \[1+7 (\text{Cases 1a and 1b})+6+5+10+8 (\text{Cases 2c and 2d)}=37.\]
     We further refine this bound by analysing the constraints imposed by the choices in Case~2b. In Lemma~\ref{qill2}, we show that if a colour class does not contain any vertex from Case~2b, then its size is at most $26$. On the other hand, suppose the colour class contains a vertex from Case~2b. Without loss of generality, let $B_1 = afgh$ be such a vertex, where $A = abcde$. This choice further reduces the admissible vertices in Case~2a (the $4$-subsets of $\overline{A} = fghijk$) from five to at most three, namely, either $\{fghi, fghj, fghk\}$ or $\{fijk, gijk, hijk\}$. Consequently, the upper bound on the size of a colour class reduces to
     \[1+7 (\text{Cases 1a and 1b})+6+3+10+8 (\text{Cases 2c and 2d)}=35.\]
     In the following lemma, we show that, under certain conditions, this upper bound reduces to $26$. \\
   
 \begin{lemma}
   Let $A = abcde$ be a vertex of $3\text{-}QI(11,2) = (V,E)$, and let $\mathcal{F} = \{ B \in V : |A \cap B| = 1,\ |B| = 4 \}$. If a colour class $\mathcal{S}$ in $3\text{-}QI(11,2)$ that contains $A$ does not contain any vertex from $\mathcal{F}$, then $|\mathcal{S}|\leq 26$.
   \label{qill2}
  \end{lemma}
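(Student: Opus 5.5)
The plan is to reuse the case bounds of Section~\ref{sec4} and add one new interaction that removes a single vertex. If $\mathcal{S}$ has no vertex from Case~2b, every $B\in\mathcal{S}\setminus\{A\}$ lies in Case~1a, 1b, 1c, 2a, 2c or 2d. Lemma~\ref{comb1bc} bounds Cases~1a and~1b together by $7$. Lemma~\ref{comb2cd} bounds Cases~2c and~2d together by $8$. Lemma~\ref{14_3} bounds Case~1c by $6$, and Lemma~\ref{14_4} bounds Case~2a by $5$. This gives only $|\mathcal{S}|\le 1+7+6+5+8=27$. So it suffices to show that these four maxima are never attained simultaneously, that is, to lose at least one vertex in every configuration.

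I would split according to the structure forced by Lemma~\ref{14_3}: the Case~1c vertices in $\mathcal{S}$ lie either in a single column or in a single row of Figure~\ref{14_1table}.

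\textbf{Column case.} Here the vertices share a $4$-subset of $A$, say $abcd$, and have varying outside digits. Up to six such vertices are present, and if there are at most five of them we already have $|\mathcal{S}|\le 26$. With two or more of them, I would check Condition~\ref{adjacencycondition} against Case~1b. A Case~1b vertex $xWXYZ$ with $x\in abcd$ meets some $abcdy$ in exactly two digits, which is forbidden. Hence Case~1b is confined to $\mathcal{F}_e$. I expect this to collide with the equality structure in Lemma~\ref{comb1bc}, which requires one vertex from each $\mathcal{F}_x$, $x\in\{a,\dots,e\}$, sharing a common $4$-subset. If that structure fails, Cases~1a and~1b contribute at most $6$, and we are done. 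I would then also test the column vertices against Cases~2c and~2d to see whether the $8$ of Lemma~\ref{comb2cd} survives; only one lost vertex is needed overall.

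\textbf{Row case.} Here the vertices share an outside digit, say $f$, and are $abcdf,abcef,\dots,bcdef$. I would check them against Case~2a. Under Condition~\ref{adjacencycondition}, each Case~2a vertex must be a $4$-subset of $fghijk$ meeting each row vertex in at most one digit, or with a small complementary intersection. My guess is that the $4$-subsets containing $f$ and those avoiding $f$ cannot coexist with a full row. This should push Case~2a to at most $4$, or else force the row to have at most $5$ vertices. If instead Case~2a survives intact, I would compare the row with the equality configuration of Lemma~\ref{comb1bc} (whose members share a $4$-subset of $\overline{A}$) and with the Case~2c vertices.

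The main obstacle is that the slack is exactly one vertex. Every sub-case must therefore be pushed to a strict loss, and the equality configurations of Lemmas~\ref{14_3}, \ref{14_4}, \ref{comb1bc} and \ref{comb2cd} must be written out explicitly and shown to be pairwise incompatible. My first step would be to list the extremal families explicitly: the equality family of Lemma~\ref{comb1bc}, the two $3$-families from the tree in Figure~\ref{tree2} and the $5$-family $fghi,fghj,ghij,fhij,fgij$ for Case~2a, and the two example families in Lemma~\ref{comb2cd}. I would then verify pairwise incompatibility via Condition~\ref{adjacencycondition} using intersection sizes only, exploiting the symmetry of the row and column structure to keep the number of checks small.
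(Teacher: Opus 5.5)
\textbf{Verdict.} Your argument is correct, and it takes a genuinely different and much shorter route than the paper's.

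\textbf{How the two routes differ.} The paper fixes a Case~2c vertex $B=abcf$. It then splits on how many Case~2c vertices occur and which $3$-subsets of $A$ they use, tracking through several subcases how these restrict Case~1b (the table of admissible $5$-sets) and Case~2d. You never touch Case~2c. Instead you use the row/column dichotomy of Lemma~\ref{14_3}. The only configuration needing work is a full six-vertex column, say $abcd\cup\{y\}$ for every $y\in\overline{A}$. Then any Case~1b vertex $\{x\}\cup W$ with $x\in abcd$ meets $abcd\cup\{y\}$ in $\{x,y\}$ for any $y\in W$, which is forbidden. So Case~1b lies in $\mathcal{F}_e$. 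This is incompatible with the unique equality configuration of Lemma~\ref{comb1bc}, namely $\{x\}\cup Q$ for all five $x\in A$ together with the two Case~1a sets containing $Q$. Hence Cases~1a and~1b give at most $6$, and $|\mathcal{S}|\le 1+6+6+5+8=26$.

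\textbf{What each route buys.} This one incompatibility (Case~1c at $6$ versus Cases~1a, 1b at $7$) already removes the single vertex of slack. So the further checks against Cases~2c and~2d, and the catalogue of extremal families in your last paragraph, are unnecessary. The paper's route gives finer information about how Case~2c constrains Case~1b, but that is not needed for the bound. Yours uses only Lemma~\ref{14_3} and the equality clause of Lemma~\ref{comb1bc}.

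\textbf{Correction 1: the row case is immediate.} A row of Figure~\ref{14_1table} has only five entries: rows are indexed by the six digits of $\overline{A}$, columns by the five $4$-subsets of $A$. So in the row case Case~1c contributes at most $5$, and $|\mathcal{S}|\le 26$ by your own ``at most five'' remark. Your guess that a full row cuts down Case~2a is actually false. A row vertex $X\cup\{f\}$ meets any $4$-subset of $\overline{A}$ in at most one digit, and Condition~\ref{adjacencycondition} permits that. That line of attack would have failed and should be dropped.

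\textbf{Correction 2: state the exclusion for the full column only.} In the column case, the exclusion of Case~1b vertices with $x\in abcd$ needs at least three column vertices, not ``two or more''. With exactly two, $abcd\cup\{y_1\}$ and $abcd\cup\{y_2\}$, the vertex $\{x\}\cup(\overline{A}\setminus\{y_1,y_2\})$ survives. Since only the six-vertex column matters, simply state the claim there.
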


\begin{proof}
    Since $\mathcal{S}$ does not contain any vertex from $\mathcal{F}$ (that is, from Case 2b in Section~\ref{sec4}), we have  $|\mathcal{S}|\leq 1(abcde)+7(\text{Cases 1a and 1b, Lemma \ref{comb1bc}})+6(\text{Case 1c})+5(\text{Case 2a})+8(\text{Cases 2c and 2d,}$ $\text{Lemma \ref{comb2cd}})=27$. \\
    
    Furthermore, if $\mathcal{S}$ contains no vertex from Case 2c, then $|\mathcal{S}|\leq 1(abcde)+7(\text{Cases 1a}$ and 1b) $+6(\text{Case 1c})+5(\text{Case 2a})+5(\text{Cases 2c and 2d})=24$. Therefore, without loss of generality, suppose that $B=abcf$ is chosen from Case 2c. This choice restricts the possible vertices from Case 1b to those listed in the table in Figure~\ref{none}, where each entry corresponds to a vertex $X$ satisfying the indicated intersection conditions with $A$ and $B$. By Lemma~\ref{condition}, no two vertices within a row can belong to the same hyperedge; hence, the vertices within a row can be assigned the same colour.\\
    
    \begin{figure}[h]
\begin{center}
    \begin{tabular}{|c|c|c|c|c|c|}
    \hline
        \multirow{2}{*}{$|X\cap A|=1$, $|X\cap B|=0$} & 
        $dghij$ & $dghik$ &$dgijk$&{\cellcolor{gray!40}$dhijk$}&$dghjk$ \\ \hhline{|~|-|-|-|-|-|}
         &$eghij$ & $eghik$ &$egijk$&\cellcolor{gray!40}$ehijk$&$eghjk$ \\     \hline
         \multirow{4}{*}{\parbox{4cm}{$|X\cap A|=1$, $|X\cap B|=1$ and $X\cap A\neq X\cap B$} } &
         $dfghi$&$dfghj$&$dfghk$&$dfgij$&$dfgik$\\   \hhline{|~|-|-|-|-|-|}
         &$dfgjk$&\cellcolor{gray!40}$dfhij$&\cellcolor{gray!40}$dfhik$&\cellcolor{gray!40}$dfhjk$&\cellcolor{gray!40}$dfijk$\\    \hhline{|~|-|-|-|-|-|}
         &$efghi$&$efghj$&$efghk$&$efgij$&$efgik$\\   \hhline{|~|-|-|-|-|-|}
         &$efgjk$&\cellcolor{gray!40}$efhij$&\cellcolor{gray!40}$efhik$&\cellcolor{gray!40}$efhjk$&\cellcolor{gray!40}$efijk$\\    \hline
        \multirow{3}{*}{\parbox{4cm}{$|X\cap A|=1$, $|X\cap B|=1$ and $X\cap A= X\cap B$}}& $aghij$ & $aghik$ &$agijk$&\cellcolor{gray!40}$ahijk$&$aghjk$ \\    \hhline{|~|-|-|-|-|-|}
         &\cellcolor{gray!40}$bghij$ & \cellcolor{gray!40}$bghik$ &\cellcolor{gray!40}$bgijk$&\cellcolor{gray!40}$bhijk$&\cellcolor{gray!40}$bghjk$ \\    \hhline{|~|-|-|-|-|-|}
         
         &\cellcolor{gray!40}$cghij$ & \cellcolor{gray!40}$cghik$ &\cellcolor{gray!40}$cgijk$&\cellcolor{gray!40}$chijk$&\cellcolor{gray!40}$cghjk$ \\    \hhline{|-|-|-|-|-|-|}
    \end{tabular}\caption{Vertices from Case 1b that can belong to a colour class with $A=abcde$ and $B=abcf$.}\label{none}\end{center}
    \end{figure}

    Since Case 2d contributes at most five vertices, a contribution of at most two vertices from Case 2c implies that Cases 2c and 2d together contribute at most 7, yielding $|\mathcal{S}|\leq 26$. If Case 2c contributes at least three vertices, then two possibilities arise:
    \begin{enumerate}
         \item[(i)] Case 2c contributes only vertices whose intersection with $B$ is exactly $abc$.
         \item[(ii)] Case 2c contributes at least one vertex whose intersection with $B$ is not equal to $abc$.
    \end{enumerate}
    
In Case (i), besides $B=abcf$, at most five vertices of the form $abcx$, where $x\in\{g,h,i,j,k\}$, can be included in $\mathcal{S}$. If any two such vertices are chosen, the last three rows of Figure~\ref{none} are eliminated, and hence no vertex containing $a, b,$ or $c$ can be included. By Lemma~\ref{comb1bc}, achieving seven vertices from Cases 1a and 1b requires all vertices corresponding to a fixed 4-subset of $fghijk$, which necessarily includes vertices containing $a, b,$ and $c$. Thus, the combined contribution from Cases 1a and 1b is at most 6. Therefore, $|\mathcal{S}|\leq 26$.\\

 In Case (ii), suppose that, in addition to $B=abcf$, a vertex $B_1$
 from Case 2c with a different 3-subset of $A$ is included. If this 3-subset intersects $abc$ in exactly one element, say $ade$, then $f\notin B_1$. Without loss of generality, let $B_1=adeg$. In this case, the table in Figure~\ref{none} reduces to the shaded entries and none from Case 2d is compatible. By Lemma~\ref{comb1bc}, the only subset of this that can yield the maximum contribution from Cases 1a and 1b is \[R=\{ahijk, bhijk, chijk, dhijk, ehijk\}.\] Further, Case 2c can contribute at most five vertices compatible with $B$ and $B_1$, namely, $adeg, adeh, adei, adej, adek$. Moreover, for $x\in\{g,h,i,j,k\}$, vertices of the form $abcx$ and $adex$ cannot simultaneously belong to the same colour class. Including any additional vertex of the form $abcx$ or $adex$ with $x\neq g$ eliminates at least one vertex from $R$. Consequently, the combined contribution from Cases 1a and 1b is at most 6 and the total size of $\mathcal{S}$ remains at most 26.\\

 Finally, suppose in addition to $B=abcf$, a vertex $B_1$
 from Case 2c with a different 3-subset of $A$, intersects $abc$ in two elements is selected; then it must contain $f$, and without loss of generality let $B_1= abdf$. In this case, the remaining vertices from Case 2c are $abef$, $bcdf$, $acdf$, $cdeg$, $cdeh$, $cdei$, $cdej$ and $cdek$. If two vertices with the 3-subset $cde$ are chosen, then only the seventh and eighth rows of Figure~\ref{none} remain, which cannot realize the optimal configuration of seven vertices from Cases 1a and 1b required by Lemma~\ref{comb1bc}, and hence $|\mathcal{S}|\leq 26$. Thus, assume that at most one vertex with 3-subset $cde$ is chosen. Moreover, the choice of $abef$ eliminates both $bcdf$ and $acdf$, and each selection from Case 2c restricts Case 2d to at most two compatible vertices. Consequently, the combined contribution from Cases 2c and 2d is at most 7 rather than 8, and hence, $|\mathcal{S}|\leq 26$. 
\end{proof}

    \begin{lemma}
     Let $A = abcde$ be a vertex of $3\text{-}QI(11,2) = (V,E)$, and let 
     \(\mathcal{F} = \{ B \in V : |A \cap B| = 1,\ |B| = 4 \}.\)  
     If a colour class $\mathcal{S}$ in $3\text{-}QI(11,2)$ that contains $A$ includes vertices $B_1, B_2 \in \mathcal{F}$ such that $|B_1 \cap B_2| = 3$ and $A \cap B_1 \neq A \cap B_2$, then $|\mathcal{S}| \leq 26$.
     \label{qill1}
     \end{lemma}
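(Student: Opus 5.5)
Without loss of generality we take $B_1=afgh$ and $B_2=bfgh$. Two $4$-subsets meeting $A$ in distinct single digits with $|B_1\cap B_2|=3$ must share their three digits from $\overline{A}=fghijk$, so this normalisation covers every case. The plan is to start from the running bound $|\mathcal{S}|\le 35$ established above and go through the seven cases of Section~\ref{sec4} in turn. For each case we determine which vertices remain compatible with both $B_1$ and $B_2$ under Condition~\ref{adjacencycondition}, and we aim to show that the total drops by at least $9$.

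\emph{Cases 2a and 2b.} For Case~2a, compatibility with $B_1$ already restricts us to $\{fghi,fghj,fghk\}$ or $\{fijk,gijk,hijk\}$, and the same restriction comes from $B_2$; we will check whether anything is lost here.

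For Case~2b we analyse $\mathcal{F}_x=\{X: A\cap X=\{x\}, |X|=4\}$. Each $\mathcal{F}_x$ with $x\ne a,b$ must meet $fgh$ in $0$ or $2$ digits, and must additionally meet it in $0$ or $2$ digits with respect to both $B_1$ and $B_2$. It also has to be consistent with the Case~2.2 analysis in the proof of Lemma~\ref{14_5}, where $B_1=xfgh$, $B_2=yfgh$ is exactly our configuration. Following sub-case (i) of that proof, if some $zfgh$ is present the count is at most $9$, or we fall into Case~1 of that proof. If instead a vertex meeting $fgh$ in one digit is present, we fall into Case~2.1 of that proof. We will track exactly which configurations attain $10$ in Case~2b. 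We expect these to be essentially the sub-case (ii) configurations of Case~2.1, namely three vertices each in $\mathcal{F}_a$ and $\mathcal{F}_b$, two in some $\mathcal{F}_z$, and one each elsewhere. We then record which digits of $\overline{A}$ these vertices use.

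\emph{Cases 1a, 1b, 1c.} Each $5$-set $X$ with $|A\cap X|\in\{0,1\}$ must meet $B_1$ and $B_2$ in an admissible number of digits, meaning the four intersections with each $B_i$ all have size at least $2$ or one of them has size at most $1$. By Lemma~\ref{comb1bc}, reaching $7$ from Cases~1a and~1b requires a common $4$-subset $Q\subset fghijk$ with all of $aQ,bQ,\dots,eQ$ present. We will check that $aQ$ and $bQ$ cannot both be compatible with $B_1$ and $B_2$ unless $Q\supseteq fgh$. For instance, $Q=fghi$ gives $|aQ\cap B_1|=4$ and $|aQ\cap B_2|=3$, which is forbidden. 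This should cut Cases~1a and~1b to at most about $5$.

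For Case~1c, the $5$-sets $X$ with $|A\cap X|=4$ have $|X\cap B_1|\in\{1,2\}$, so compatibility forces $|X\cap B_1|=1$. This restricts the $6\times5$ array of Lemma~\ref{14_3}: the column choice must avoid containing $a$ together with extra digits, and so on. We expect Case~1c to be limited to a single row or column of reduced length.

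\emph{Cases 2c and 2d.} The same filtering applies to the vertices of Lemma~\ref{comb2cd} containing $a$ and $b$. A $4$-subset of $A$ containing both $a$ and $b$ meets $B_1$ in $1$ digit, which is allowed. A vertex $abcx$ with $x\in fgh$, however, meets $B_1$ in $2$ digits and is forbidden.

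\emph{Combining.} We sum the refined per-case bounds and, where the sum still exceeds $26$, use the mutual exclusions between Case~2b configurations and Cases~1a--1c, as was done in Lemma~\ref{qill2}. The main obstacle is this final interaction step, because the individually refined bounds will likely still sum to around $28$--$30$. We would first try to show that any Case~2b configuration of size at least $9$ forces a fixed $4$-subset pattern that kills Case~1c entirely or reduces Cases~1a and~1b to at most $3$. If that fails, we would split according to whether $\mathcal{S}$ contains some $zfgh$, using the sub-cases of Lemma~\ref{14_5} as the case skeleton.
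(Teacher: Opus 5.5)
\textbf{There is a genuine gap.} Your proposal is an outline rather than a proof. The step you flag as the main obstacle, the interaction analysis, carries essentially all of the content of this lemma, and it is left as things you ``would try''.

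\textbf{Your per-case refinements are partly wrong, so you underestimate what remains.}
\begin{itemize}
\item For a $5$-set $X$ and a $4$-set $B$, Condition~\ref{adjacencycondition} forbids only $|X\cap B|=2$. If $|X\cap B|=3$ then $|\overline{X}\cap B|=1$, and the pair may share a colour.
\item So $|aQ\cap B_2|=3$ is no obstruction. Your example $Q=fghi$ even satisfies your own exception $Q\supseteq fgh$.
\item In fact $fghij, fghik, afghi, bfghi, cfghi, dfghi, efghi$ are all compatible with $A$, $B_1=afgh$ and $B_2=bfgh$. Cases~1a and~1b therefore still contribute $7$.
\item Likewise, a vertex of $\mathcal{F}_x$ with $x\neq a,b$ must meet $fgh$ in $0$, $1$ or $3$ digits, not ``$0$ or $2$''.
\end{itemize}
The only per-case saving is in Case~1c: rows $f,g,h$ of Figure~\ref{14_1table} die, leaving at most $5$. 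The refined sum is therefore $1+7+5+3+10+8=34$, not $28$--$30$. Consequently your first line of attack, aimed at Case~2b configurations of size at least $9$, cannot suffice. Any Case~2b contribution of size $3$ or more already exceeds $26$ in the per-case count.

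\textbf{The missing idea is the paper's reduction to a third Case~2b vertex.} Since $1+7+5+3+2+8=26$, one may assume $|\mathcal{S}\cap\mathcal{F}|\ge 3$ and pick a further $B_3\in\mathcal{S}\cap\mathcal{F}$. Up to symmetry, $B_3$ is $cfgh$, $cfij$ or $cijk$, meeting both $B_i$ in $3$, $1$ or $0$ digits respectively. For each type, the surviving vertices of the other cases are listed explicitly:
\begin{itemize}
\item $cfgh$ removes every Case~2c vertex using $f$, $g$ or $h$, so Cases~2c and~2d give at most $5$. Any further $xijk$ kills the surviving Case~1a vertices $fghij, fghik, fghjk$.
\item $cfij$ leaves only $fghij$ in Case~1a and at most one vertex in Case~2a.
\item $cijk$ cuts Case~1c to $abdei, abdej, abdek$ and makes Cases~1c and~2b jointly at most $7$.
\end{itemize}
Your fallback of splitting on whether some $zfgh$ lies in $\mathcal{S}$ corresponds only to the first branch. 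The other branches and all the accompanying counts are absent.

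A complete split should also allow the mixed type $aijk$ or $bijk$, which meets one $B_i$ in one digit and the other in none. This type is incompatible with every $cfij$-type vertex, kills Case~1a, and leaves only $bcdei, bcdej, bcdek$ in Case~1c. So if no $cfgh$- or $cijk$-type vertex is present, it gives at most $1+5+3+3+4+8=24$.
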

     \begin{proof}
    Since $A \cap B_1 \neq A \cap B_2$, the vertices $B_1$ and $B_2$ must contain the same $3$-subset of $\overline{A}$. Without loss of generality, let $B_1 = afgh$ and $B_2 = bfgh$. This choice invalidates the first three rows of the array in Figure~\ref{14_1table} (see Lemma~\ref{14_3}) and restricts the admissible vertices in Case~1c. In particular, Case~1c can now contribute at most $5$ vertices instead of $6$. These arise either from one of the last three rows (corresponding to $4$-subsets of $A$ together with a fixed element from $\{i,j,k\}$), or from at most three vertices in a column (corresponding to a fixed $4$-subset of $A$ together with elements from $\{i,j,k\}$). Therefore, 
    \[|\mathcal{S}| \le 1 + 7 \;(\text{Cases~1a, 1b}) + 5 + 3 + 10 \;(\mathcal{F}) + 8 \;(\text{Cases~2c, 2d}) = 34.\]
    If $|\mathcal{S} \cap \mathcal{F}| \le 2$, then \(|\mathcal{S}| \le 1 + 7 + 5 + 3 + 2 \;(\mathcal{F}) + 8 = 26.\) Hence, in the remainder of the proof, assume that $|\mathcal{S} \cap \mathcal{F}| \ge 3$. Suppose, in addition to $B_1 = afgh$ and $B_2 = bfgh$, there exists a third vertex $B_3 \in \mathcal{S} \cap \mathcal{F}$. Then there are three possible cases, according to whether $|B_i \cap B_3| \in \{0,1,3\}$ for $i = 1,2$.\\

    If $|B_i \cap B_3| = 3$ for $i = 1,2$, then without loss of generality, let $B_3 = cfgh$. This choice eliminates all vertices in Case~2c that involve any of the elements $f, g, h$ as such vertices intersect one of the $B_i$, $i=1, 2, 3,$ at two digits, which is not permitted. Furthermore, the selection of a vertex from Case~2c restricts the admissible vertices in Case~2d to at most two. If an additional vertex from Case~2c, corresponding to a different $3$-subset of $A$, is included, then at most one vertex remains admissible in Case~2d. Therefore, in all cases, the combined contribution from Cases~2c and~2d is at most five. Consequently, \[
|\mathcal{S}| \le 1 + 7 \;(\text{Cases~1a, 1b}) + 5 + 3 + 10 + 5 \;(\text{Cases~2c, 2d}) = 31.\] 
With the above choice of $B_1, B_2,$ and $B_3$, the remaining options from Case~2b are:
\begin{center}
\begin{tabular}{ccccccccc}
    $dfgh$ & $efgh$ &&&&&&&\\
    $aijk$ & $bijk$ & $cijk$ & $dijk$ & $eijk$ &&&&\\
    $dfij$ & $dfik$ & $dfjk$ & $dgij$ & $dgik$ & $dgjk$ & $dhij$ & $dhik$ & $dhjk$\\
    $efij$ & $efik$ & $efjk$ & $egij$ & $egik$ & $egjk$ & $ehij$ & $ehik$ & $ehjk$
\end{tabular}
\end{center}Note that selecting $dfgh$ and $efgh$, from the first row in the above array, makes the third and fourth rows invalid, respectively. On the other hand, each vertex in the second row intersects every $5$-subset of $\overline{A}$ in exactly two elements. Consequently, choosing any vertex from the second row forces the exclusion of all the vertices from Case~1a. Similarly, choosing any vertex from the second row reduces the choices from Case~1c to at most three.  Moreover, individually, the third or fourth row can contribute at most three vertices, whereas together they can contribute at most four. For example, without loss of generality, if $B_4 = dfij$ is selected from the third row, then the only remaining valid vertices in the third row are either $\{dfik, dfjk\}$ or $\{dgij, dhij\}$. The vertices from the fourth row that remain compatible with $B_4$ are \(efij,\ egik,\ egjk,\ ehik,\ ehjk.\) Each additional selection from the third row eliminates at least three of these remaining vertices from the fourth row. Hence, the combined contribution from the third and fourth rows is at most four. We now proceed by considering the number of vertices selected from the second row. 
     \begin{enumerate}
         \item[(i)] If at least two vertices are selected from the second row, then all vertices in the remaining last three rows of the array in Figure~\ref{14_1table} (see Lemma~\ref{14_3}) are eliminated, and Case~1c contributes no vertices. Thus,
         \[|\mathcal{S}| \leq 1 + 0 \;(\text{Case~1a}) + 5 \;(\text{Case~1b}) + 0 \;(\text{Case~1c}) + 3 + 10 + 5 = 24.\] 
         \item[(ii)] If exactly one vertex is selected from the second row, then Case~2b can contribute at most eight vertices in total. Indeed, in addition to $B_1, B_2,$ and $B_3$, the possibilities are as follows: either two vertices from the first row and one from the second row; or one vertex each from the first and second rows together with three vertices from either the third or fourth row; or no vertex from the first row, one from the second row, and at most four vertices from the third and fourth rows combined. Hence, \[|\mathcal{S}| \leq 1 + 0 \;(\text{Case~1a}) + 5 \;(\text{Case~1b}) + 3 \;(\text{Case~1c}) + 3 + 8 + 5 = 25.\] 
         \item[(iii)] If no vertex is selected from the second row, then Case~2b can contribute at most seven vertices in total. Indeed, in addition to $B_1, B_2,$ and $B_3$, the possibilities are as follows: either two vertices from the first row and none from the remaining rows; or one vertex from the first row together with three vertices from either the third or fourth row; or no vertex from the first row and at most four vertices from the third and fourth rows combined. Therefore, 
         \[|\mathcal{S}| \leq 1 + 7 \;(\text{Cases~1a, 1b}) + 5 \;(\text{Case~1c}) + 3 \;(\text{Case~2a}) + 5 \;(\text{Case~2b}) + 5 = 26,\] 
         when no vertex is selected from the third and fourth rows. On the other hand, selecting a vertex from either the third or fourth row reduces the number of admissible vertices in Case~2a to at most one. Hence, 
         \[|\mathcal{S}| \leq 1 + 7 \;(\text{Cases~1a, 1b}) + 5 \;(\text{Case~1c}) + 1 \;(\text{Case~2a}) + 7 \;(\text{Case~2b}) + 5 = 26.\]
     \end{enumerate}
     Thus, if $|B_i \cap B_3| = 3$ for $i = 1,2$, then $|\mathcal{S}| \leq 26$. Therefore, in the remainder of the proof, we may assume that $\mathcal{S}$ contains no vertex from the set $\{cfgh, dfgh, efgh\}$. \\

    If $|B_i \cap B_3| = 1$ for $i = 1,2$, then without loss of generality, let $B_3 = cfij$. With this choice of $B_1, B_2$ and $B_3$, Case~1a can contribute only $fghij$ and from Case~1b, the following vertices remain:
\begin{center}
\begin{tabular}{cccccc}
    $afghk$ & $bfghk$ & $dfghk$ & $efghk$ \\
    $cfghi$ & $cfijk$ & $cgijk$ & $chijk$ & $cfghj$
\end{tabular}
\end{center}
From this collection, at most four vertices can belong to $\mathcal{S}$. Moreover, selecting $fghij$ from Case~1a eliminates all vertices in the first row and leaves at most two admissible vertices from the second row. Hence, combining Cases~1a and~1b yields at most four vertices in total, with the maximum attained by selecting vertices from the first row above. Similarly, Case~2a can contribute only $fijk$ or $fghk$ and  the remaining options from Case~2c are given in Table~\ref{tabb}.
\begin{table}[h]
\centering
\begin{tabular}{|c|c|c|c|c|c|c|c|c|c|}
\hline
    \cellcolor{gray!40}$cdeg$ & \cellcolor{gray!40}$cdeh$ &&&&&&&&\\\hline
    \cellcolor{gray!40}$abdi$ & \cellcolor{gray!40}$abei$ & $adei$ & $bdei$ &&&&&&\\\hline
    \cellcolor{gray!40}$abdj$ & \cellcolor{gray!40}$abej$ & $adej$ & $bdej$ &&&&&& \\\hline
    \cellcolor{gray!40}$abdk$ & \cellcolor{gray!40}$abek$ & $adek$ & $bdek$ & \cellcolor{gray!40}$abck$ & $acdk$ & $acek$ & $bcdk$ & $bcek$ & \cellcolor{gray!40}$cdek$\\\hline
\end{tabular}
\caption{Options remaining in Case~2c when $B_3 = cfij$}
\label{tabb}
\end{table}
From each row of Table~\ref{tabb}, at most four vertices can be selected (see the proof of Lemma~\ref{14_6}). Moreover, selecting a vertex from the second row leaves only one admissible vertex in the third row, and vice versa. Selecting two vertices from the second row eliminates all vertices from the third row, and vice versa. If either $cdeg$ or $cdeh$ is selected, then Table~\ref{tabb} reduces to only shaded entries, from which at most five can be in $\mathcal{S}$ (for instance, $cdeg, cdeh, cdek, abdi, abdj$). On the other hand, if neither $cdeg$ nor $cdeh$ is selected, then choosing one vertex from either the second or third row leaves four admissible vertices in the fourth row, of which at most three can be selected. Choosing two vertices from the second or third rows leaves only one admissible vertex in the fourth row. In either case, the total number of vertices selected from Table~\ref{tabb} is at most five. \\

Furthermore, each selection from Table~\ref{tabb} leaves at most two admissible vertices in Case~2d. A subsequent selection corresponding to a different $3$-subset of $A$ reduces the number of admissible vertices in Case~2d by at least one. Since there are at most three vertices in Table~\ref{tabb} corresponding to a fixed $3$-subset of $A$, the combined contribution from Cases~2c and~2d is at most five. Hence, in this case $|\mathcal{S}|\leq 1+ 4 \;(\text{Cases 1a, 1b})+5\;(\text{Case 1c})+1\;(\text{Case 2a})+ 10\;(\text{Case 2b})+5\;(\text{Cases 2c, 2d})=26.$ \\

 Finally, we assume that Case~2b does not contribute any vertex $B_3$ such that $|B_i\cap B_3|=3$ or $|B_i\cap B_3|=1$ for $i=1, 2$. Thus, if $|B_i \cap B_3| = 0$ for $i = 1,2$, then without loss of generality, let $B_3 = cijk$. Consequently, Case~1c can contribute only three vertices, namely, $abdei, abdej, abdek$. Similarly, Case~2a can contribute at most three vertices, either $\{fijk, gijk, hijk\}$ or $\{fghi, fghj, fghk\}$. Moreover, the remaining vertices from Case~1b are shown in Table~\ref{tab:my_labe}.\\
\begin{table}[h]
    \centering
    \begin{tabular}{|c|c|c|c|c|c|}
    \hline
        $afghi$ & $afghj$ & $afghk$ & & & \\
        \hline
         $bfghi$& $bfghj$ & $bfghk$ &&&\\
         \hline
         &&&$cfijk$&$cgijk$&$chijk$\\
         \hline
         $dfghi$&$dfghj$&$dfghk$&$dfijk$&$dgijk$&$dhijk$\\
         \hline
         $efghi$&$efghj$&$efghk$&$efijk$&$egijk$&$ehijk$\\
         \hline
    \end{tabular}
    \caption{Choices from Case~1b, with $B_3=cijk$}
    \label{tab:my_labe}
\end{table}\\
 Note that any two vertices lying in different rows and different columns of Table~\ref{tab:my_labe} intersect in either two or three digits, and hence cannot both belong to $\mathcal{S}$. By Lemma~\ref{comb1bc}, for the union of Cases~1a and~1b to attain its maximum contribution, all five vertices corresponding to a fixed $4$-subset of $\overline{A}$ from Case~1b must be available. However, this condition is not satisfied in the above array. Therefore, the combined contribution of Cases~1a  and~1b is at most six vertices in $\mathcal{S}$. Further, with the above choice of $B_1, B_2,$ and $B_3$, the remaining valid options in  Case~2b are $aijk, bijk, dijk,$ and $ eijk$. Thus, Case~2b can contribute at most seven vertices in $\mathcal{S}$. Moreover, any such choice from Case~2b eliminates all possibilities from Case~1c. Therefore, Case 1c and 2b combined can contribute at most seven vertices. Consequently, $|\mathcal{S}|\leq 1+ 6\,(\text{Case 1a, 1b})+7\,(\text{Case~1c, 2b})+ 3\,(\text{Case~2a})+8\,(\text{Case~2c, 2d)}=25$.  

\end{proof}

 
  \begin{theorem}\label{QI}  
  The hypergraph $3\text{-}QI(11,2)$ has strong independence number $\alpha_S(3\text{-}QI(11,2))$ $=26$ and strong chromatic number $\chi_S(3\text{-}QI(11,2))\geq31$.
  \end{theorem}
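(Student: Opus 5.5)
The plan has three parts: an upper bound $\alpha_S\le 26$ from the preceding case analysis, a matching construction of a strongly independent set of size $26$, and the counting bound $\chi_S\ge\lceil 792/26\rceil=31$.

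\emph{Upper bound.} Let $\mathcal{S}$ be any colour class. If $\mathcal{S}$ contains only $4$-subsets, the argument at the start of Section~\ref{sec4} applies. It combines Lemma~\ref{14_8}, Proposition~\ref{14_9} and Lemma~\ref{14_10} and already gives $|\mathcal{S}|\le 26$. Otherwise $\mathcal{S}$ contains a $5$-subset, which we call $A=abcde$, and we split on the vertices of $\mathcal{S}$ from Case~2b, i.e.\ $4$-subsets $B$ with $|A\cap B|=1$.
\begin{itemize}
\item If there is no such vertex, Lemma~\ref{qill2} gives $|\mathcal{S}|\le 26$.
\item If two such vertices $B_1,B_2$ satisfy $|B_1\cap B_2|=3$ and $A\cap B_1\ne A\cap B_2$, Lemma~\ref{qill1} gives $|\mathcal{S}|\le 26$.
\item In the remaining case, any two Case~2b vertices with different traces on $A$ meet in $0$ or $1$ element. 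Here I would redo the refined count $1+7+6+3+10+8=35$. Every Case~2b vertex meets every other Case~2b vertex in $0$, $1$ or $3$ elements, and it excludes many Case~1c and Case~2c options, much as in the proof of Lemma~\ref{qill1}. The aim is to show that once Case~2b contributes at least three vertices, the Case~1c and Cases~2c/2d contributions drop enough to bring the total to $26$. Fewer than three Case~2b vertices already give at most $1+7+6+3+2+8=27$. One further elimination then finishes, namely that a Case~2b vertex together with a Case~2c vertex kills a Case~2d option.
\end{itemize}

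\emph{Lower bound construction.} I would look for an explicit $26$-vertex class among the $4$-subsets, since the $4$-subset bound $1+5+13+7$ appears nearly tight. Take $A=abcd$ and include
\begin{itemize}
\item the row $abce,\dots,abck$, giving $7$ vertices with $|A\cap B|=3$;
\item the column configuration from Lemma~\ref{14_8}, giving $4$ further vertices;
\item $13$ vertices realising Case~1 of Lemma~\ref{12_1}.
\end{itemize}
Adjusting these choices should reach exactly $26$. Verifying the class is strongly independent means checking every pair against Condition~\ref{adjacencycondition}: each pair of $4$-sets must meet in $0$, $1$ or $3$ elements. I would state the set in a figure and note the pairwise intersection sizes row by row.

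\emph{Chromatic bound.} Once $\alpha_S=26$, every colour class of a strong colouring has at most $26$ vertices. Since $|V|=792$, this gives $\chi_S\ge\lceil 792/26\rceil=31$.

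The main obstacle is the residual case of the upper bound, where the paper's lemmas do not directly apply, together with finding an explicit compatible $26$-set. I would try first to search the configuration suggested by the equality case of Lemma~\ref{12_1}.
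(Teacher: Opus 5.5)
Your upper-bound skeleton matches the paper's: the $4$-subsets-only count, then Lemma~\ref{qill2}, Lemma~\ref{qill1} and a residual case. The step $\lceil 792/26\rceil=31$ is also fine. There are, however, two genuine gaps.

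\textbf{The construction you plan cannot exist.} In a strongly independent family $\mathcal{G}$ of $4$-subsets, all pairwise intersections lie in $\{0,1,3\}$. So ``equal or meeting in $3$ points'' is an equivalence relation on $\mathcal{G}$: if $|B_1\cap B_2|=|B_2\cap B_3|=3$, then $|B_1\cap B_3|\ge 2$, hence it equals $3$.
\begin{itemize}
\item Each class is either a star $\{T\cup\{y\}\}$ with $|T|=3$, whose $s\le 8$ members cover $3+3s$ pairs, or lies in $\binom{P}{4}$ for a $5$-set $P$. In the latter case, with at least three members it covers all $10$ pairs of $P$ and has at most $5$ members.
\item Sets in different classes share no pair, so the pair-shadows of the classes are disjoint inside the $55$ pairs of $[11]$.
\item At most two classes are of the second kind, because three $5$-sets pairwise meeting in at most one point need $12$ points.
\end{itemize}
Hence $|\mathcal{G}|\le 10+\tfrac{35}{3}<22$. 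The true maximum is $18$, attained for example by $\{1,2,3,y\}$ ($y\in\{4,\dots,11\}$), the $4$-subsets of $\{6,\dots,10\}$, and $\{4,5,11,z\}$ ($z\in\{6,\dots,10\}$). So $1+5+13+7$ is far from tight, and a $26$-element witness must use $5$-subsets.

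Your recipe is also inconsistent on its own terms. Once $abce,\dots,abck$ are all present, every vertex with $|A\cap B|=1$ containing $a$, $b$ or $c$ meets some $abcy$ in two points. So only $\mathcal{F}_d$ survives, giving at most $5$ by Lemma~\ref{13_familywise}. The paper's witness is the ``no Case~2b'' configuration around $A=12345$: the seven sets $1234y$, the six $5$-subsets of $\{6,\dots,11\}$, the eight sets $123y$, and the five $4$-subsets of $\{6,\dots,10\}$.

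\textbf{The residual case is not done, and your finishing step is false.} A Case~2b vertex meets every $4$-subset of $A$ in at most one point, so it never excludes a Case~2d vertex. The restriction that a Case~2c vertex imposes on Case~2d is already built into Lemma~\ref{comb2cd}. Concretely, with $B_1=afgh$ and $B_2=bijk$, the eight sets $cdef,\dots,cdek,acde,bcde$ are compatible with $A,B_1,B_2$. So Cases~2c/2d still give $8$, and your elimination does not push $27$ down to $26$.

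The savings must come from Cases~1a/1b, 1c and 2a. The paper obtains them by locating $B_2$ in the table of admissible Case~2b vertices:
\begin{itemize}
\item $B_2=aijk$ forces 1a/1b to at most $6$;
\item $B_2=bijk$ kills Case~1a, with 1a/1b at most $4$ and 1c at most $3$;
\item $B_2=bfij$ gives 1a/1b at most $4$ and 2a at most $1$;
\item $B_2=afgi$ gives 2a at most $1$ and cuts either 1a/1b or 1c.
\end{itemize}
In all of these the paper keeps Cases~2c/2d at $8$. Your branch with at least three Case~2b vertices is stated only as an aim, and its expectation that 1c and 2c/2d absorb the loss points in the wrong direction.
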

  
  \begin{proof}

Let $A=abcde$ be a vertex of $3\text{-}QI(11,2)$ and let $\mathcal{S}$ be a colour class containing $A$. If $\mathcal{S}$ contains no vertex from Case~2b, then $|\mathcal{S}|\leq 26$ by Lemma~\ref{qill2}. If $\mathcal{S}$ contains two vertices $B_1,B_2$ from Case~2b with $|B_1\cap B_2|=3$ and $A\cap B_1\neq A\cap B_2$, then $|\mathcal{S}|\leq 26$ by Lemma~\ref{qill1}. Thus, in the rest of the proof, we may assume that $\mathcal{S}$ contains at least one vertex from Case~2b, but no pair $B_1, B_2$ satisfying the above condition. Without loss of generality, let $B_1=afgh$. If Case~2b contributes only $B_1$, then 
\[|\mathcal{S}|\le 1+7\,\text{(Case 1a, 1b)} + 6 + 3\,\text{(Case 2a)}+ 1\,\text{(Case 2b)}+8\,\text{(Case 2c, 2d)}=26. \]
If Case~2b contributes at least two vertices, then any additional vertex $B_2\neq B_1$ must satisfy one of the following: either $|B_1\cap B_2|=0$, or $|B_1\cap B_2|=1$, or $|B_1\cap B_2|=3$ with $A\cap B_1 = A\cap B_2$. With the choice $B_1=afgh$, the remaining admissible vertices in Case~2b are given by
\begin{center}
    \begin{tabular}{cccccccccccc}
      $aijk$ &  $bijk$ & $cijk$ & $dijk$ & $eijk$\\
      $bfij$ & $bfik$ & $bfjk$ & $bgij$ & $bgik$ & $bgjk$ & $bhij$ & $bhik$ & $bhjk$ \\
      $cfij$ & $cfik$ & $cfjk$ & $cgij$ & $cgik$ & $cgjk$ & $chij$ & $chik$ & $chjk$ \\
      $dfij$ & $dfik$ & $dfjk$ & $dgij$ & $dgik$ & $dgjk$ & $dhij$ & $dhik$ & $dhjk$ \\
      $efij$ & $efik$ & $efjk$ & $egij$ & $egik$ & $egjk$ & $ehij$ & $ehik$ & $ehjk$ \\
      $afgk$ & $afgj$ & $afgi$ &  $aghk$ & $aghj$ & $aghi$ & $afhk$ & $afhj$ & $afhi$ 
    \end{tabular}
\end{center}
Note that each of the last five rows can contribute at most three vertices. If more than one vertex is selected from the first row, then the result follows from Lemma~\ref{qill1}. Therefore, we assume that the first row contributes at most one vertex. If the vertex $aijk$ from the first row is selected, then no vertex from the remaining five rows can be chosen, as each would intersect $aijk$ in two digits. Moreover, in this case, the combined contribution from Cases~1a and~1b is strictly less than $7$. Indeed, by Lemma~\ref{comb1bc}, attaining the maximum requires that all seven vertices corresponding to a fixed $4$-subset of $\overline{A}$ be available; however, any such choice invalidates either $B_1$ or $B_2$. Thus, 
\[|\mathcal{S}|\le 1+6\,\text{(Case 1a, 1b)} + 6 + 3\,\text{(Case 2a)}+ 2\,\text{(Case 2b)}+8\,\text{(Case 2c, 2d)}=26. \]
If a vertex different from $aijk$ is selected from the first row, then without loss of generality let it be $B_2=bijk$. In this case, only the second and the last rows of the above array remain admissible in Case~2b, since every vertex in rows $3$--$5$ intersects $B_2$ in two digits. Thus, in addition to $B_1$ and $B_2$, Case~2b contributes at most six further vertices. However, if any such vertex from the second or the last row is selected, then Case~2a contributes at most one vertex. Hence, Cases~2a and~2b together contribute at most nine vertices. Further, with the choice of $B_1$ and $B_2$, Case~1a contributes no vertex, while Case~1b reduces to the vertices $afghx, cfghx, dfghx, efghx$ for $x\in\{i,j,k\}$ and $byijk, cyijk, dyijk, eyijk$ for $y\in\{f,g,h\}$. Moreover, Case~1c reduces to $acdei, acdej, acdek, bcdef, bcdeg, bcdeh$, of which at most three can coexist. Thus,
\[
|\mathcal{S}|\le 1 + 4\,(\text{Cases~1a,~1b}) + 3\,(\text{Case~1c}) + 9\,(\text{Case~2a, 2b}) + 8\,(\text{Cases~2c,~2d}) = 25.
\]

If no vertex is selected from the first row, then either a vertex is selected from rows $2$--$5$, or none is selected from these rows. Suppose first that a vertex is selected from rows $2$--$5$. Without loss of generality, let $B_2=bfij$ (the case $|B_1\cap B_2|=1$). In this case, from Case~1a only the vertex $fghij$ remains. From Case~1b, the admissible vertices are
\[
afghk,\ cfghk,\ dfghk,\ efghk,\ afgij,\ afhij,\ aghik,\ aghjk,\ bfghi,\ bfghj,\ bfijk.
\]
However, at most four of these can coexist in a colour class. Indeed, attaining the maximum requires, by Lemma \ref{comb1bc}, that all vertices corresponding to a fixed $4$-subset of $\overline{A}$ be present, which is not possible here. Hence, the combined contribution from Cases~1a and~1b is at most four. Similarly, Case~1c contributes at most five vertices, namely $abcdk, abcek, abdek,$ $ acdek,$ and $bcdek$. From Case~2a, only $fijk$ or $fghk$ remains admissible. If a vertex $B_3$ from rows $3$--$6$ is selected with $|B_2\cap B_3|=3$, then the case reduces to Lemma~\ref{qill1}. Otherwise, suppose $B_3$ from Case~2b satisfies $|B_2\cap B_3|=1$; without loss of generality, let $B_3=cgik$. In this case, Case~1c is further restricted and contributes at most one vertex from $\{bcdeh,\, acdej,\, abdek\}$. Hence,
\[
|\mathcal{S}|\le 1 + 4\,(\text{Cases~1a,~1b}) + 1\,(\text{Case~1c}) + 1\,(\text{Case~2a}) + 10\,(\text{Case~2b}) + 8 = 25.
\]

\noindent If $|B_2\cap B_3|=0$, then either $B_3=cijk$, which has already been excluded with the first row, or $B_3$ intersects $B_1$ in two elements, and hence is not admissible. If none of the above choices for $B_3$ occur, then Case~2b contributes only vertices from the second and the last rows, apart from $B_1$, yielding at most seven vertices. Consequently,
\[
|\mathcal{S}|\le 1 + 4\,(\text{Cases~1a,~1b}) + 5\,(\text{Case~1c}) + 1\,(\text{Case~2a}) + 7\,(\text{Case~2b}) + 8 = 26.
\]

Finally, if only the last row contributes to $\mathcal{S}$, then without loss of generality let $B_2=afgi$. In this case, Case~2a contributes at most one vertex, namely, $fghi$ or $hijk$, and Case~2b contributes at most four vertices. From Cases~1a and~1b, the remaining admissible vertices are shown below. 
\begin{center}
    \begin{tabular}{ccccccc}
       $fghij$ & $fghik$ &$afghi$&$bfghi$&$cfghi$&$dfghi$&$efghi$ \\
       &&&$bhijk$ & $chijk$ &$dhijk$ &$ehijk$ \\
       $afghj$&$afghk$&$afgij$&$afgik$&$afgjk$&$afhik$&$aghik$
   \end{tabular}
\end{center}
To attain the maximum contribution from Cases~1a and~1b, all vertices corresponding to a fixed $4$-subset of $\overline{A}$ must be present. If any vertex of the form $xfghi$ with $x\in\{a,b,c,d,e\}$ is chosen, then Case~1c is restricted to at most five vertices, yielding
\[
|\mathcal{S}|\le 1 + 7\,(\text{Cases~1a,~1b}) + 5\,(\text{Case~1c}) + 1\,(\text{Case~2a}) + 4\,(\text{Case~2b}) + 8 = 26.
\]
Otherwise, if no such vertex is chosen, then Cases~1a and~1b together contribute at most five vertices, and hence
\[
|\mathcal{S}|\le 1 + 5\,(\text{Cases~1a,~1b}) + 6\,(\text{Case~1c}) + 1\,(\text{Case~2a}) + 4\,(\text{Case~2b}) + 8 = 25.
\]
Thus, in all cases, a colour class has size at most $26$. Since each colour class is a strongly independent set, it follows that \[\alpha_S(3\text{-}QI(11,2))\leq 26.\] 
Moreover, the following set of $26$ vertices forms a strongly independent set:
\[
\begin{array}{llllll}
12345 & 12346 & 12347 & 12348 & 12349 & 12340 \\
1234x & 1236  & 1237  & 1238  & 1239  & 1230 \\
123x  & 67890 & 6789x & 6790x & 6890x & 7890x \\
6780x & 6789  & 6780  & 6890  & 7890  & 6790 \\
1234  & 1235
\end{array}
\]
where $0$ and $x$ denote $10$ and $11$, respectively. Hence, $\alpha_S(3\text{-}QI(11,2))=26$, and consequently,
\[
\chi_S(3\text{-}QI(11,2)) \ge \left\lceil \frac{792}{26} \right\rceil = 31.
\]
\end{proof}

In \cite[Corollary~5]{raina}, it is shown that if $\chi_S(3\text{-}QI(11,2))>30$, then $CAN(3\text{-}QI(11,2),2)$ $=11$. Together with Theorem \ref{QI}, this yields the following.\\

\begin{corollary}\label{final}
    $CAN(3\text{-}QI(11,2),2)=11$.
\end{corollary}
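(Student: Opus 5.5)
The corollary asserts equality, so I will prove $CAN(3\text{-}QI(11,2),2)\le 11$ and $CAN(3\text{-}QI(11,2),2)\ge 11$ separately.

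For the upper bound I will use the characterization of Raaphorst et al.: a $CA(n,H,g)$ exists if and only if there is a homomorphism $H\to 3\text{-}QI(n,g)$. Taking $H=3\text{-}QI(11,2)$ and the identity map gives a $CA(11,3\text{-}QI(11,2),2)$. Concretely, list the $792$ vertices as binary columns of length $11$. By the definition of a hyperedge, every triple of columns indexed by a hyperedge realises all eight binary $3$-tuples. Hence $CAN(3\text{-}QI(11,2),2)\le 11$.

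For the lower bound I will not redo the argument. I will invoke \cite[Corollary~5]{raina} exactly as quoted: if $\chi_S(3\text{-}QI(11,2))>30$, then $CAN(3\text{-}QI(11,2),2)=11$. Theorem~\ref{QI} gives $\chi_S(3\text{-}QI(11,2))\ge \lceil 792/26\rceil = 31>30$. This follows from $\alpha_S=26$, since every colour class of a strong colouring is strongly independent. The hypothesis of the cited corollary is therefore satisfied, and the equality follows.

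To make the logic transparent, I will also recall why the cited corollary holds. Suppose a $CA(10,3\text{-}QI(11,2),2)$ existed. It would give a homomorphism $\phi:3\text{-}QI(11,2)\to 3\text{-}QI(10,2)$. Each fibre $\phi^{-1}(v)$ is strongly independent, because two vertices of a hyperedge must map to distinct vertices of the image hyperedge. Composing $\phi$ with a strong colouring of $3\text{-}QI(10,2)$ would then be a strong colouring of $3\text{-}QI(11,2)$ with at most $\chi_S(3\text{-}QI(10,2))\le 30$ colours, using the upper bound from \cite{raina}. This contradicts $\chi_S\ge 31$.

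The main obstacle is this last step. One must check that the hypothesis and conclusion of \cite[Corollary~5]{raina} match exactly. In particular, one needs $\chi_S(3\text{-}QI(10,2))\le 30$, and one needs that the composition argument also rules out all sizes below $10$, not only size $10$. Sizes below $10$ are handled by noting that $CAN$ is monotone in $n$: a shorter array can be padded with arbitrary rows. Alternatively, $3\text{-}QI(n,2)$ maps homomorphically into $3\text{-}QI(n+1,2)$.
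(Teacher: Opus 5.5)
Your proposal is correct and takes essentially the same route as the paper, which simply combines $\chi_S(3\text{-}QI(11,2))\ge 31$ from Theorem~\ref{QI} with \cite[Corollary~5]{raina}. Your other steps only unpack that citation, and they match the remark the paper makes right after the corollary: the identity-map upper bound, the padding argument for sizes below $10$, and the contradiction from composing a homomorphism into $3\text{-}QI(10,2)$ with a strong colouring using at most $30$ colours.
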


Furthermore, if $H$ is a $3$-uniform hypergraph with $\chi_S(H)>30$, then there is no homomorphism $H \to 3\text{-}QI(10,2)$. By~\cite[Theorem 4.3]{raaphorst2018variable}, this implies that no covering array $CA(10,H,2)$ exists, and hence $CAN(H,2)\geq 11$. Moreover, if there exists a homomorphism $H \to 3\text{-}QI(11,2)$, then a covering array $CA(11,H,2)$ exists, and therefore $CAN(H,2)=11$.\\

\begin{corollary}
Let $H$ be a $3$-uniform hypergraph with $\omega_3(H)>5$. Then $CAN(H,2)>11$. Moreover, if there exists a homomorphism $H \to 3\text{-}QI(12,2)$, then $CAN(H,2)=12$.
\end{corollary}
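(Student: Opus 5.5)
The plan is to reduce the statement to the classical covering array number $CAN(3,6,2)=12$, the minimum number of rows in a binary array with $6$ columns in which every $3$ columns cover all $8$ binary triples. I read $\omega_3(H)$ as the $3$-clique number of $H$: the largest size of a vertex set $K$ such that every $3$-subset of $K$ is a hyperedge of $H$. The argument has two steps: a restriction argument giving the lower bound, and the homomorphism criterion of Raaphorst et al.\ giving the upper bound.

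For the lower bound, suppose $\omega_3(H)\ge 6$ and fix a $3$-clique $K=\{v_1,\dots,v_6\}$ in $H$. Suppose for contradiction that a $CA(n,H,2)$ exists with $n\le 11$.
\begin{itemize}
\item Restrict the array to the six columns indexed by $K$. Every $3$-subset of $K$ is a hyperedge, so by definition every triple of these columns contains all $8$ binary $3$-tuples as rows.
\item This is exactly a classical covering array with $n$ rows, strength $3$, $6$ columns and $2$ symbols, so $n\ge CAN(3,6,2)$.
\item The classical value $CAN(3,6,2)=12$ (equivalently, there is no $3$-qualitatively independent family of six binary vectors of length $11$) then gives $n\ge 12$, a contradiction.
\end{itemize}
Hence $CAN(H,2)\ge 12>11$. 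Adding rows to a covering array preserves the covering property, so ruling out $n=11$ already rules out every $n\le 11$.

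Equivalently, one can argue through homomorphisms. A homomorphism $H\to 3\text{-}QI(11,2)$ must send $K$ injectively, since each hyperedge maps to three distinct vertices, and its image would be a $3$-clique of size $6$ in $3\text{-}QI(11,2)$. By Condition~\ref{adjacencycondition} and the definition of hyperedges, that clique is again a $3$-qualitatively independent family of six binary vectors of length $11$. Then \cite[Theorem 4.3]{raaphorst2018variable} forbids $CA(11,H,2)$. The direct restriction argument above avoids this detour, so I would use it as the main route.

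For the second claim, if there is a homomorphism $H\to 3\text{-}QI(12,2)$, then by the characterisation of Raaphorst et al.\ \cite{raaphorst2018variable} a $CA(12,H,2)$ exists. Combined with the first part this gives $CAN(H,2)=12$.

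The main obstacle is justifying $CAN(3,6,2)=12$, i.e.\ that no $11\times 6$ binary array is a strength-$3$ covering array. I would first cite the known tables of small covering array numbers (this value is classical). If a self-contained argument is wanted, I would try the following.
\begin{itemize}
\item Normalise the columns by complementation and use the standard bound $CAN(t,k,2)\ge 2\,CAN(t-1,k-1,2)$.
\item Condition on the value in one column. Each of the two resulting row-subsets restricted to the other $5$ columns must be a strength-$2$ binary covering array on $5$ columns.
\item Since $CAN(2,5,2)=6$, both halves need at least $6$ rows, giving at least $12$ rows in total.
\end{itemize}
This conditioning step is the only genuinely nontrivial ingredient, and it is where I would concentrate the verification.
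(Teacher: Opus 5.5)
Your proposal is correct, but for the first claim it takes a more direct route than the paper.

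The paper stays inside its homomorphism framework. It cites $\omega_3(3\text{-}QI(11,2))=5$ from \cite{raina} and observes that a homomorphism sends a $3$-clique of size $6$ injectively onto a $3$-clique. It concludes that there is no homomorphism $H\to 3\text{-}QI(11,2)$, and hence, by the Raaphorst et al.\ characterization, no $CA(11,H,2)$.

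You instead restrict a putative $CA(n,H,2)$ to the six clique columns, which gives a classical strength-$3$ binary covering array with six columns. You then prove the needed bound yourself, $CAN(3,6,2)\ge 2\,CAN(2,5,2)=12$, by splitting the rows according to one column. That bound is exactly the content behind the cited inequality $\omega_3(3\text{-}QI(11,2))\le 5$. Neither the equality nor the upper bound $CAN(3,6,2)\le 12$ is ever needed.

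What your route buys is self-containment: the first claim needs neither the QI hypergraph nor the homomorphism theorem. It also shows that the paper's appeal to Theorem~\ref{QI} at that step is unnecessary, since $CAN(H,2)>11$ follows from the nonexistence of $CA(11,H,2)$ alone. What the paper's route buys is consistency with its overall approach of bounding $CAN(H,2)$ through homomorphism-invariant parameters of $3\text{-}QI(n,2)$. This is the same template as the strong-chromatic-number remark just before the corollary.

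The second claim is handled identically in both. Your only remaining external input is $CAN(2,5,2)=6$. This is the Kleitman--Spencer/Katona bound: five rows support at most $\binom{4}{3}=4$ columns. Equivalently, it is the easy fact that an intersecting antichain of subsets of a $4$-set has at most $4$ members. The normalisation by complementation is not needed for the row-splitting argument.
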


\begin{proof}
From~\cite{raina}, $\omega_3(3\text{-}QI(11,2))=5$. Since homomorphisms preserve $3$-cliques, there is no homomorphism $H \to 3\text{-}QI(11,2)$, and hence, by Theorem~\ref{QI}, $CAN(H,2)>CAN(3\text{-}QI(11,2),$ $2)=11$. If there exists a homomorphism $H \to 3\text{-}QI(12,2)$, then a covering array $CA(12,H,2)$ exists, implying $CAN(H,2)=12$.
\end{proof}

\section{Conclusion}
\label{sec5}

Our results on the strong independence and strong chromatic numbers of $3\text{-}QI(11,2)$, by studying its vertices as families of intersecting set systems, imply that $CAN(3\text{-}QI(11,2),2)$ $=11$, providing further evidence towards the conjecture that $CAN(3\text{-}QI(n,2),2)=n$ for all $n\geq 8$. The conjecture is now verified for $n=8,9,10,11,12$~\cite{raina}; while this method can be extended to larger values of $n$ in theory, the large number of subcases arising from Condition \ref{adjacencycondition} renders it impractical.

\section*{Acknowledgements}
The first author is grateful to the Department of Science and Technology (DST) for financial support under the DST-INSPIRE Senior Research Fellow scheme with sanction no. DST/INSPIRE Fellowship/2021/IF210377.

\bibliographystyle{ieeetr}
\bibliography{ref}

\end{document}